\newtheorem{definition}{Definition}[section]
\newtheorem{assumption}{Assumption}[section]
\newtheorem{proposition}{Proposition}[section]
\newtheorem{remark}{Remark}[section]
\newenvironment{proof}{{\bf Proof\ }}{\QED\\}
\newtheorem{lemma}{Lemma}[section]
\numberwithin{equation}{section}
\newtheorem{theorem}{Theorem}[section]
\newcommand{\QED}{\hspace*{\fill}\rule{2.5mm}{2.5mm}}
\begin{document}
\title{Nonlinear approximation theory for the homogeneous Boltzmann equation}
\author{\\ Minh-Binh Tran\\
 Basque Center for Applied Mathematics\\
 Mazarredo 14, 48009 Bilbao Spain\\
Email:   tbinh@bcamath.org}
\maketitle
\begin{abstract} A challenging problem in solving the Boltzmann equation numerically is that the velocity space is approximated by a finite region. Therefore, most methods are based on a truncation technique and the computational cost is then very high if the velocity domain is large. Moreover, sometimes, non-physical conditions have to be  imposed on the equation in order to keep the velocity domain bounded. In this paper, we introduce the first nonlinear approximation theory for the Boltzmann equation. Our nonlinear wavelet approximation is non-truncated and based on a nonlinear, adaptive spectral method associated with a new wavelet filtering technique and a new formulation of the equation. A complete and new theory to study the method is provided. The method is proved to converge and perfectly preserve most of the properties of the homogeneous Boltzmann equation. It could also be considered as a general frame work for approximating kinetic integral equations.
\end{abstract}
{\bf Keyword}
{Boltzmann equation, wavelet, adaptive spectral method, Maxwell lower bound, propagation of polynomial moments, propagation of exponential moments,  convergence to equilibrium, conservation laws, nonlinear approximation theory, numerical stability, convergence theory, wavelet filter. \\{\bf MSC:} {82C40, 65M70, 76P05, 41A46, 42C40.}
\tableofcontents
\section{Introduction}
Numerical resolution methods for the Boltzmann equation plays a very important role in the practical an theoretical study of the theory of rarefied gas. The main difficulty in the approximation of the Boltzmann equation is due to the multidimensional structure of the Boltzmann collision operator. The best known numerical method for Boltzmann equation is the Direct Simulation Monte Carlo technique by Bird \cite{Bird:1995:MGD}. The method is very efficient and preserves the main physical property of the equation; however, it is quite expensive.
\\ After the early work of Carleman (\cite{Carleman:1933:TEI}, \cite{Carleman:1957:PMC}),  Discrete Velocity Models - DVMs  has been developed as one of the main classes of deterministic algorithms to resolve the Boltzmann equation numerically (\cite{Tartar:1976:EGP}, \cite{Bony:1987:SGB}, \cite{Bony:1991:EGD}, \cite{BobylevCercignani:1999:DVM}, \cite{PatkowskiIllner:1988:DVM}, \cite{BobylevPalczewski:1995:OAB}, \cite{PalczewskiSchneider:1998:ESC}, \cite{Buet:1996:DVS},  \cite{KowalczykPalczewski:2008:NSB}). They are based on a Cartesian grid in velocity and a discrete collision operator, which is a nonlinear system of conservation laws
\begin{equation}\label{EqDVMs}
\frac{\partial f_i}{\partial t} +v_i.\nabla_x f_i=Q_i(f,f), (x,t)\in \Omega\times \mathbb{R}, v_i\in V,
\end{equation}
where $Q_i$ is the discrete collision operator. The velocity set $V$ is assumed to be a part of the regular grid
$$\mathcal{Z}_\Delta=\Delta\mathbb{Z}^3=\{\Delta (i_1,i_2,i_3)~~|~~(i_1,i_2,i_3)\in\mathbb{Z}^3\},$$
contained in a truncated set 
\begin{equation}\label{DVMsTruncation}
\mathcal{V}_\Delta^R=\{\Delta (i_1,i_2,i_3)~~|~~(i_1,i_2,i_3)\in\mathbb{Z}^3; |i_1|, |i_2|, |i_3|<R\}.
\end{equation}
 However, in order to guarantee the convergence, the mesh size $\Delta$ needs to be very small and the parameter $R$ needs to be large. DVMs are then very expensive, especially if we want to observe the behaviour of the solution for large velocities, which is a very important issue in the study of the Boltzmann equation.  Even for small velocities, the methods are quite expensive. The models were proved to be consistent (\cite{PalczewskSchneideriBobylev:1997:CRD}, \cite{FainsilberKurlbergWennberg:2006:LPC}), i.e. the discrete collision term could be seen as an approximation of the real collision operator. In \cite{Mischler:1997:CDV},  \cite{PanferovHeintz:2002:NCD}, \cite{DesvillettesMischler:1996:ASA} the approximate solutions are proved to converge weakly to the solution of the main equation  when $\Delta$ tends to $0$ and $R$ tends to infinity by DiPerna-Lions theory (\cite{DiPernaLions:1989:OCP}). However, because of this way of truncating the mesh from $\mathcal{Z}_\Delta$ to $\mathcal{V}_\Delta^R$, it is not easy to obtain an accuracy estimate of errors between the approximate solutions and the global solution on the entire non-truncated space.
\\ The second deterministic approximation is the Fourier Spectral Methods - FSMs, which were first introduced in \cite{PareschiPerthame:1996:FSM} inspired by spectral methods in fluid mechanics. The methods were later developed in several works, where a new way of accelerating the algorithms was also introduced  (\cite{PareschiRusso:2000:NSB}, \cite{MouhotPareschi:2004:FMB}, \cite{PareschiRusso:2005:INA}, \cite{FilbetMouhotPareschi:2006:SBE},  \cite{MouhotPareschi:2006:FAC}, \cite{FilbetMouhotPareschi:2006:SBE}, \cite{MouhotPareschi:2004:FMB}, \cite{PareschiRusso:2000:OSS}, \cite{PareschiToscaniVillani:2003:SMB}, \cite{Filbet:2012:ODA}, \cite{FilbetRusso:2004:ANM}, \cite{FilbetHuJin:2012:NSQ}, \cite{MarkowichPareschi:2005:FCE}). The analysis of the methods was provided in \cite{FilbetMouhot:2011:ASM}. The idea of the methods is to truncate the Boltzmann equation on the velocity space and periodize the solution on this new bounded domain. To illustrate this idea, we consider the equation
\begin{equation}\label{BoltzmannTruncated} 
\frac{\partial f}{\partial t}+v.\nabla_x f=Q^R(f,f), (x,v,t)\in \Omega\times (-R,R)^3\times \mathbb{R}, 
\end{equation}
where $Q^R$ is the truncated collision operator and $f$ is periodic on $(-R,R)^3$. Since $f$ is periodic on $(-R,R)^3$, we can write an approximation $f_N$ of $f$ in terms of Fourier series
$$f_N=\sum_{k=-(N,N,N)}^{(N,N,N)}\hat{f}_k\exp\left(-i\frac{\pi}{R}k.v\right),$$
which leads to a system of ODEs
$$\partial_t f_N=P_N Q^R(f_N,f_N),$$
or
$$\int_{(-R,R)^3}\left(\frac{\partial f_N}{\partial t}-Q^R(f_N,f_N)\right)\exp\left(-i\frac{\pi}{R}k.v\right)dv=0.$$
The approximation $(\ref{BoltzmannTruncated})$ was also used to derive DVMs for the Boltzmann equation (\cite{MouhotPareschiRey:2013:CDA}) through Carleman's representation of the equation. However, it is proved (\cite{FilbetMouhot:2011:ASM}) that $(\ref{BoltzmannTruncated})$ has a constant function equilibrium state, which is totally different from a normal equilibrium state of the Boltzmann equation. Therefore, solving $(\ref{BoltzmannTruncated})$ does not give us the real solution of the Boltzmann equation. 
\\ The major problem with deterministic methods like DVMs and FSMs that use a fixed discretization in the velocity space is that the velocity space is approximated by a finite region. Physically, the velocity space is $\mathbb{R}^3$ and even if the initial condition is compactly supported, the collision operator does not preserve this property. The collision operator indeed spreads out the supports by a factor $\sqrt2$ (see \cite{PulvirentiWennberg:1997:MLB}). Therefore in order to use both  DVMs and FSMs, we have to impose nonphysical conditions to keep the supports of the solutions in the velocity space uniformly compact. For DVMs, we have to remove binary collisions which spread outside the bounded velocity space. This truncation breaks down the convolution structure of the collision operators. For FSMs, the convolution structure is perfectly preserved however we need to add nonphysical binary collisions by a periodized process. In \cite{GambaTharkabhushanam:2009:SLM}, \cite{GambaTharkabhushanam:2010:SBS}, Gamba and Tharkabhushanam proposed another class of FSMs, called Spectral-Lagrangian Methods (SLMs), to preserve the conservation of mass, momentum and energy on the numerical schemes. However, since these are truncated methods, we need to remove the values of the initial conditions with velocities lying outside of the truncated domain. Moreover, similar as the other Fourier-based algorithms, the positivity of the solution could not be automatically preserved. Indeed, the authors proved that if the computation domain is large enough, the negative parts of the approximate solutions are very small in the energy norms. When the computation domains are large, since we need to keep the mesh sizes small, we need to put more grid points and the methods become very expensive. Indeed, for the three deterministic approximations that we mention here DVMs and both classical FSMs and SLMs, the meshes are non-adaptive and therefore they are expensive to carry out computations on large domains. Another drawback of SLMs is that according to the theory, the initial conditions need to be regular enough to guarantee the convergence.
\\ In order to be able to construct numerical schemes, it is natural that we require the computation domain to be bounded. To solve partial differential equations on unbounded domains, there is a famous method called Absorbing Boundary Conditions (ABCs) of Engquist and Majda (\cite{EngquistMajda:1977:ABC}, \cite{EngquistMajda:1979:RBC}), where the PDE  we need to solve is restricted onto a bounded domain and some artificial boundary conditions are introduced in order to guarantee that solving the equation on the bounded domain with the new boundary conditions and solving that equation on the whole space would give the same result. The construction of the ABCs is based on the partial differential structure of the PDEs and could not be used for integral equations like Boltzmann. Though it does not allow us to build ABCs, the integral structure of the Boltzmann equation gives us another advantage to build an equivalent strategy as ABCs for Boltzmann equation, which we will describe in the following: Consider the following change of variables from $\mathbb{R}^3$ to $(-1,1)^3$ $$v\to \bar{v}=\frac{v}{1+|v|}.$$
Apply this change of variables to the Boltzmann equation, we get a new formulation where the equation is considered on a bounded domain. Since there is no partial differential structure in the Boltzmann equation, the prize that we need to pay after using this change of variable is just the Jacobian of it, which is $\frac{1}{(1+|v|)^{4}}$. This means that we need to introduce a weight $\frac{1}{(1+|v|)^{4}}$ or equivalently $(\sqrt{1+|v|^2})^{-4}$ in all of the norms we consider for the solution of the new equation. Notice that $(\sqrt{1+|v|^2})^{-4}$ is just the momentum with order $-4$, which appears quite a lot in the theory of Boltzmann equation. {\it This new formulation of the Boltzmann equation is discussed in details in Section  $\ref{SectNewFormulationBoltzmann}$.}
\\ After having an equation on a bounded domain through a change of variables technique, we can construct a spectral algorithm similar as in \cite{{PareschiPerthame:1996:FSM}}. However, different from \cite{{PareschiPerthame:1996:FSM}}, we do not use Fourier basis.  Let us explain why. We recall some quantitative properties of the Boltzmann equation that we want to preserve on the numerical schemes. Notice that these properties could not be preserved with previous strategies.
\begin{itemize}
\item {\bf Maxwellian lower bounds} (Carleman \cite{Carleman:1933:TEI}, Pulvirenti and Wennberg\cite{PulvirentiWennberg:1997:MLB}):
if the initial condition $f_0$ satisfies 
$$\int_{\mathbb{R}^3}f_0(v)(1+|v|^2)dv<+\infty,$$
then
$$\forall t_0>0,\exists K_0>0,\exists A_0>0; t\geq t_0 \Longrightarrow \forall v\in\mathbb{R}^3, f(t,v)\geq K_0\exp(-A_0|v|^2),
$$
or
\begin{eqnarray}\label{Property1}
 & &\forall t_0>0,\exists K_0>0,\exists A_0>0; \\\nonumber
 & &t\geq t_0 \Longrightarrow \forall \bar{v}\in(-1,1)^3, f(t,\bar{v})\geq K_0\exp\left(-A_0\left|\frac{\bar{v}}{1-|\bar{v}|}\right|^2\right),
\end{eqnarray}
\item {\bf Production of polynomial moments} (Povzner \cite{Povzner:1962:BEK}, Desvillettes \cite{Desvillettes:1993:SAM}, Wennberg \cite{Wennberg:1997:EDM}, Mischler and Wennberg \cite{MischlerWennberg:1999:SHB}): if the initial condition $f_0$ satisfies 
$$\int_{\mathbb{R}^3}f_0(v)(1+|v|^2)dv<+\infty,$$
then
$$\forall s\geq 2, \forall t_0>0,\sup_{t\geq t_0}\int_{\mathbb{R}^3}f(t,v)(1+|v|^s)<+\infty,
$$
or
\begin{equation}\label{Property2}
\forall s\geq 2, \forall t_0>0,\sup_{t\geq t_0}\int_{(-1,1)^3}f(t,\bar{v})\left(1+\left|\frac{\bar{v}}{1-|\bar{v}|}\right|^s\right)<+\infty.
\end{equation}
\item{\bf  Propagation of exponential moments} (Bobylev, Gamba and Panferov \cite{BobylevGambaPanferov:2004:MIH}, Gamba, Panferov and Villani \cite{GambaPanferovVillani:2009:UMS}, Alonso, Ca{\~n}izo, Gamba and Mouhot\cite{ACGM:2013:NAC}):
 Assume that the initial data satisfies for some $s\in[\gamma,2]$
$$\int_{\mathbb{R}^3}f_0(v)\exp(a_0|v|^s)dv\leq C_0,
$$then there are some constants $C,a>0$ such that
$$\int_{\mathbb{R}^3}f(t,v)\exp(a|v|^s)dv<C,
$$
or
\begin{equation}\label{Property3}
\int_{(-1,1)^3}f(t,\bar{v})\exp\left(a\left|\frac{\bar{v}}{1-|\bar{v}|}\right|^s\right)d\bar{v}<C.
\end{equation}
\end{itemize}
Suppose that we approximate $f$ by its truncated Fourier series
$$f_N=\sum_{k_1,k_2,k_3=(-N,-N,-N)}^{(N,N,N)}\hat{f}_k \exp(i\pi k.\bar{v}),$$
with 
$$\hat{f}_k=\frac{1}{8}\int_{(-1,1)^3}f(\bar{v})\exp(-i\pi k.\bar{v})d\bar{v}.$$
We can see that the approximate solution $f_N$ will never satisfy the properties that we mention above no matter how good $f$ is. The reason is that all components of the Fourier basis, i.e. the $\sin$ and $\cos$ functions are globally and smoothly defined on the whole interval $[-1,1]$ and they encounter singular problems at the extremes $-1$ and $1$. This raises the need for a compactly supported wavelet basis and a new filtering technique. The idea of the technique is simple: we remove compactly supported wavelets which contain the singular points $-1$ and $1$. After having a good orthogonal basis based on this filtering technique, we can apply the normal spectral method to solve the equation. This filtering technique looks like a truncation technique, however it is more natural since we only  need to remove some spectral components and different from classical approximations, the support of our approximate solutions spread to the whole space $\mathbb{R}^3$ gradually after each approximate level $N$. Moreover, it is designed to preserve properties $(\ref{Property1})$, $(\ref{Property2})$ and $(\ref{Property3})$, which are crucial in resolving the Boltzmann equation numerically. We preserve the good properties of both DVMs and FSMs: we are able to keep the convolution structure of the collision operators and do not have to impose a periodic boundary condition on the equation. In addition, our algorithm solves the entire, non-trucated problem with the complexity $(N^{2d})$. {\it The wavelet basis, the filtering technique and the spectral method will be presented in section $\ref{SecWavelet}$. More precisely, our spectral equation is defined in $(\ref{SpectralEquation})$. A comparison between Zuazua's Fourier filtering technique (\cite{Zuazua:2005:POC} and \cite{Zuazua:CNA:2006}) used to preserve the propagation, observation and control of waves and our wavelet filtering technique used to preserve the properties of propagation of polynomial and exponential moments  will also be given in subsection $\ref{SubsecAssumptionPolynomial}$.}
\\ We have provided our first point of view based on Absorbing Boundary Contions. In order to understand better the mechanism of our nonlinear, adaptive spectral method, we now provide a different point of view based on Nonlinear Approximation Theory (\cite{DeVore:1998:NA}, \cite{DaubechiesDeVore:2003:ABF}, \cite{DeVore:2007:OC}). The fundamental problem of approximation theory is to resolve a  complicated function, by simpler, easier to compute functions called "the approximants". The main idea of nonlinear approximation is that the approximants do not come from linear spaces but rather from nonlinear manifolds. An important application of nonlinear approximation is  the adaptive finite element methods for elliptic equations originated in \cite{BabukaSuri:1994:PHP} and developed in  \cite{CohenDahmenDeVore:2001:AWM}, \cite{Cohen:2002:AMP}, \cite{CohenDeVoreNochetto:2012:AFEM}. These methods are based on the idea that fine meshes are put where the solutions are bad and coarse meshes are set where the solutions are good. Coming back to the Boltzmann equation, suppose that we use the Haar wavelet to solve the Botlzmann equation with the new variable $\bar{v}$ on $(-1,1)^3$. As we see later from  $(\ref{HaarFunction})$, $(\ref{HaarPeriodizedBasis})$ and $(\ref{HaarPeriodizedBasisRearrange})$, solving the Boltzmann equation with $\bar{v}$ on $(-1,1)^3$  means that we need construct a mesh by to dividing $(-1,1)^3$ into $2^{3N}$ small cubes. To explain clearer our idea, suppose that we are in one dimension and we need to approximate the solution in a space spanned by the following orthogonal  basis
\begin{eqnarray*}
\left\{ \begin{array}{ll}
\phi_{N,k}(\bar{v})=\chi_{(2^{-N}(2k-1),2^{-N}(2k+1))} \mbox{ for }k=0,\pm 1,\dots,\pm (2^{N-1}-1) ,\vspace{.1in}\\
\phi_{N,2^{N-1}}(\bar{v})=\chi_{(-1,-1+2^{-N})\cup(1-2^{-N},1)}. \end{array}\right.
\end{eqnarray*}
 Let us make the change of variable $\bar{v}\to v=\frac{\bar{v}}{1-|\bar{v}|}$. 
\begin{eqnarray*}
\left\{ \begin{array}{ll}
\phi_{N,k}(v)=\chi_{\left(\min\left\{\frac{2k-1}{2^N-|2k-1|},\frac{2k+1}{2^N-|2k+1|}\right\},\max\left\{\frac{2k-1}{2^N-|2k-1|},\frac{2k+1}{2^N-|2k+1|}\right\}\right)}\vspace{.1in}\\ ~~~~~~~~~~~~~~~~~~~~\mbox{ for }k=0,\pm 1,\dots,\pm (2^{N-1}-1) ,\vspace{.1in}\\
\phi_{N,2^{N-1}}(v)=\chi_{(-\infty,2^N-1)\cup(2^N-1,+\infty)}. \end{array}\right.
\end{eqnarray*}
We can see that solving the Boltzmann equation in $\bar{v}$ on a uniform mesh in $(-1,1)$ is equivalent with solving the Boltzmann equation in ${v}$ on a non-uniform mesh  in $\mathbb{R}$. In other words, the role of the change of variables $v \to\bar{v}$ is to construct a new non-uniform mesh to approximate the Boltzmann equation. The non-uniform mesh has the following interesting property: the larger $|v|$ is the coarser the mesh is, and the smaller $|v|$ is the finer the mesh is. This is crucial, since properties $(\ref{Property1})$, $(\ref{Property2})$ and $(\ref{Property3})$ play the role of a preconditioning analysis in our nonlinear approximation theory: the solution $f$ of the Boltzmann equation behaves like a Maxwellian as $|v|$ large, which means that if $|v|$ is large, we only need a coarse mesh to represent the value of $f$. This is also the main difference between our approximation and classical ones. We can see from the spectral equations $(\ref{SpectralEquation})$ and $(\ref{SpectralEquationSol})$ that the mapping $\varphi$ has a "support-stretching" effect: it maps the wavelet basis $\{\Phi_{N,k}\}$ supported in $(-1,1)^3$ to a new "nonlinear basis" $\{\Phi_{N,k}(\varphi)\}$ supported in the whole space, which are "the approximants" of our nonlinear approximation.   {\it Our method therefore gives a general frame work for solving kinetic integral equations (for example, the coagulation models \cite{Escobedo:2012:CNM}, the quantum Boltzmann equations \cite{EscobedoMischler:2001:OQB},...) numerically:} Suppose that we need to solve the following problem
$$\partial_tf(t,v)= \mathbb{Q}(f,f)(t,v),\mbox{ on  } (0,T)\times\mathbb{R}^3,$$
$$f(0,v)=f_0(v)\mbox{ on  } \mathbb{R}^3,$$
where $\mathbb{Q}$ is some bilinear form. We approximate $f$ as
$$f_N(v)=\sum_{k=(-N,-N,-N)}^{(N,N,N)}a_k{\Phi_{N,k}(\varphi(v))},$$
and get the approximate equation on the unknown $\left(a_k(t)\right)_{k=(-N,-N,-N)}^{(N,N,N)}$
\begin{eqnarray*}
{\frac{\partial a_k}{\partial t}}
&=&\sum_{i,j=(-N,-N,-N)}^{(N,N,N)}{a_ia_j}{<\mathbb{Q}\left({\Phi_{N,i}}(\varphi(v)),{\Phi_{N,j}}(\varphi(v))\right),\Phi_{N,k}(\varphi(v))>}.
\end{eqnarray*}
\\ Moreover, our approximation also provides a general view point for both DVMs and FSMs: FSMs and DVMs are special cases of our approximation using Fourier and Haar wavelet basis. If we take Haar wavelet basis as the spectral basis, our algorithm in this special case then gives an nonlinear, adaptive DVMs for Boltzmann equation, where no direct truncation as $(\ref{DVMsTruncation})$ is imposed and the convolution structure of the collision operator is perfectly preserved. Our new adaptive DVMs is then cheap and it has a spectral accuracy. Therefore, both classical DVMs and FSMs could be seen as special linear and non-adaptive  approximations in our theory. We will come back to this discussion at the end of subsection $\ref{SubSecSpectral}$.
\\ We also introduce a full new analysis to  study theoretically our algorithm. Different with the periodized case (\cite{FilbetMouhot:2011:ASM}) where the truncated Boltzmann collision operator is a bounded bilinear form and the projection of the collision operator onto the subspaces $P_NQ^R$ could be considered as a perturbation of  $Q^R$ with a small term $(Id-P_N)Q^R$, in our case, the analysis is much harder since the collision operator is unbounded. Since $P_NQ$ does not preserve the symmetry of $Q$, the first problem is how we could preserve the conservation laws with this approximation
$$\int_{\mathbb{R}^3}P_NQ^R(f,f)dv=\int_{\mathbb{R}^3}P_NQ^R(f,f)v_idv=\int_{\mathbb{R}^3}P_NQ^R(f,f)|v|^2dv=0.$$ 
Another problem is the preservation the "coercivity" property of the gain part of the collision operator
$$\int_{\mathbb{R}^3}P_NQ^R_+(f,f)fdv\geq \int_{\mathbb{R}^3}|v|^\gamma f^2dv.$$
Notice that this is one of the main advantages of our approximation: perfectly preserve the coercivity property of the gain part of the collision operator. Approximation strategies using Fourier basis could not preserve this coercivity structure because of the effect of the Gibbs phenomenon and the non-positivity of the projection $P_N$ using Fourier basis. We then introduce some new methods to resolve these problems. Since the methods are quite technical, we will leave this discussion for subsection $\ref{SubSecAssumption}$. Based on these new techniques, we can construct a new method for studying our algorithm theoretically.
\begin{itemize}
\item We approximate the projected operator $P_NQ$ by bounded operators $Q_{N,\lambda}$ and prove that the solutions $f_{N,\lambda}$ produced by the bounded operators are uniformly bounded in $L^1$ and $L^2$, moreover they are bounded from below by a Maxwellian. Notice that different from Fourier-based spectral algorithms, our approximate solutions are automatically positive because of the positivity of the wavelet projection.
\item We prove that $f_{N,\lambda}$ converges to $f_N$ as $\lambda$ tends to infinity. Moreover $f_N$ are uniformly bounded in $L^1$, $L^2$ and they are bounded from below by a Maxwellian.
\item We preform  a detailed analysis to prove that $f_N$ converges to $f$ which guarantees the convergence of the algorithm. Notice that different from \cite{Mischler:1997:CDV},  \cite{PanferovHeintz:2002:NCD} our convergence proof is not based on averaging lemma techniques and gives a strong convergence result. 
\end{itemize}
In order to preserve properties $(\ref{Property2})$ and $(\ref{Property3})$, we need to overcome further difficulties. One difficulty is to obtain an $L^1_s$ estimate of the collision operator: how we could perform a Povzner's inequality argument for the projected collision operator $P_NQ^R$ since the structure of the operator is totally different. Another difficulty should be: if we have 
$$\int_{\mathbb{R}^3}f_0(v)\exp(a_0|v|^s)dv\leq C_0,$$
how could we have a uniform bound with respect to $N$ for 
$$\int_{\mathbb{R}^3}P_N(f_0(v))\exp(a_0|v|^s)dv.$$
Due to the Gibbs phenomenon and the non-positivity of the projection $P_N$, Fourier basis is not a good choice to preserve these properties. This is then another advantage of spectral approximations using wavelet basis. We introduce some new methods to overcome these difficulties and we will leave these discussions for sections $\ref{secpolynomialmonemts}$ and $\ref{secexponentialmonemts}$.  
Our main results are: 
\begin{itemize}
\item We prove that the algorithm converges; the energy, mass and momentum of the approximate solution converge to that of the original equation; moreover the approximate solution is bounded from below by a Maxwellian. {\it These are the results of theorems $\ref{ThmBondh_N}$ and $\ref{ThmConvergence}$.}
\item We prove that the polynomial moments of arbitrary orders of the approximate solutions are uniformly bounded. {\it This is the result of theorem $\ref{ThmPropaPolyMoments}$.}
\item We also prove that the exponential moments of the approximate solutions are uniformly bounded. {\it This is the result of theorem $\ref{ThmUpperMaxwellBounds}$.}
\end{itemize}
In other words, the algorithm is proved to converge and preserve all of the properties $(\ref{Property1})$, $(\ref{Property2})$ and $(\ref{Property3})$ as well as the conservation laws. Moreover, we could prove that the approximate solutions belong to $L^2_s$ and converges to the main solution in $L^1_s$ for all $s>0$ (see remark $\ref{RemarkLs1Convergence}$). Since our nonlinear approximation preserves well the structure of the collision operator, we could expect that other properties of the solution could be reflected on the numerical scheme as well.
\\ We also want to mention another important property of the Boltzmann equation: In the paper \cite{Mouhot:2006:RCE}, Mouhot proved that the solution of the Boltzmann equation converges to its equilibrium with the rate $O(\exp(-ct))$. {\it In theorem $\ref{ThmConvergencetoEquilibrium}$, we  prove that this property is preserved by our approximation as well.}
\section{A new formulation of the Boltzmann equation}\label{SectNewFormulationBoltzmann}
\subsection{The Boltzmann equation}
The Boltzmann equation describes the behaviour of a dilute gas of particles when the binary elastic collisions are the only interactions taken into account. It reads
\begin{equation}\label{Boltzmann}
\frac{\partial f}{\partial t} +v.\nabla_x f=Q(f,f), ~~~ x\in \Omega, v\in\mathbb{R}^3,
\end{equation}
where $\Omega\in\mathbb{R}^3$ is the spacial domain and $f:=f(t,x,v)$ is the time-dependent particle distribution function for the phase space. The Boltzmann collision operator $Q$ is a quadratic operator defined as
\begin{equation}\label{CollisionOperator}
Q(f,f)(v)=\int_{\mathbb{R}^d}\int_{\mathbb{S}^{d-1}}B(|v-v_*|,\cos\theta)(f'_*f'-f_*f)d\sigma dv_*,
\end{equation}
where $f=f(v)$, $f_*=f(v_*)$, $f'=f(v')$, $f'_*=f(v'_*)$ and
\begin{eqnarray*}
\left\{ \begin{array}{ll} v'=v-\frac{1}{2}((v-v_*-|v-v_*|\sigma),\vspace{.1in}\\
v'_*=v-\frac{1}{2}((v-v_*+|v-v_*|\sigma),\end{array}\right.
\end{eqnarray*}
with $\sigma\in \mathbb{S}^2$.
\\ In this work, we only assume that $B$ is locally integrable and
\begin{equation}\label{AssumptionB}
B(|u|,\cos\theta)=|u|^\gamma b(\cos\theta),
\end{equation}
where $\gamma\in(0,1)$ and $b$ is a smooth function satisfying
\begin{equation}\label{Assupmtionb1}
\int_0^\pi b(\cos\theta)\sin\theta d\theta<+\infty,
\end{equation}
and assumptions $(2.1)$-$(2.2)$ in \cite{MouhotVillani:2004:RTS}
\begin{equation}\label{Assupmtionb2}
\exists \theta_b>0 \mbox{ such that } supp\{b(\cos\theta)\}\subset\{\theta~~~|~~~\theta_b\leq\theta\leq\pi-\theta_b\}.
\end{equation}
Under these assumptions, the collision operator could be split as
$$Q(f,f)=Q^+(f,f)-L(f)f,$$
with 
$$Q^+(f,f)=\int_{\mathbb{R}^3}\int_{\mathbb{S}^2}B(|v-v_*|,\cos\theta)f'_*f'd\sigma dv_*$$
and
$$L(f)=\int_{\mathbb{R}^3}\int_{\mathbb{S}^2}B(|v-v_*|,\cos\theta)f_*d\sigma dv_*.$$
Formally, Boltzmann collision operator has the properties of conserving mass, momentum and energy
$$\int_{\mathbb{R}^3}Q(f,f)dv=0,$$
$$\int_{\mathbb{R}^3}Q(f,f)vdv=0,$$
$$\int_{\mathbb{R}^3}Q(f,f)|v|^2dv=0,$$
and it satisfies the Boltzmann's H-theorem
$$-\frac{d}{dt}\int_{\mathbb{R}^3}f\log fdv=-\int_{\mathbb{R}^3}Q(f,f)log f dv\geq 0,$$
in which $-\int f \log f$ is defined as the {\it entropy} of the solution. A consequence of the Boltzmann's H-theorem is that any equilibrium distribution function has the form of a locally Maxwellian distribution
$$M(\rho,u,T)=\frac{\rho}{(2\pi T)^{3/2}}\exp\left(-\frac{|u-v|^2}{2T}\right),$$
where $\rho$, $u$, $T$ are the {\it density, macroscopic velocity} and {\it temperature} of the gas
$$\rho=\int_{\mathbb{R}^3}f(v)dv,$$
$$u=\frac{1}{\rho}\int_{\mathbb{R}^3}vf(v)dv,$$
$$T=\frac{1}{3\rho}\int_{\mathbb{R}^3}|u-v|^2f(v)dv.$$
We suppose that the initial datum $f_0$ satisfies $f_0(x,v)\geq 0$ on $\mathbb{R}^6$ and
$$\int_{\mathbb{R}^3}f_0(v)(1+|v|^2)dv<+\infty.$$
We refer to \cite{CercignaniIllnerPulvirenti:1994:TMT} and \cite{Villani:2002:RMT} for further details and discussions on the Boltzmann equation. In this work, we only consider the equation in $\mathbb{R}^3$ but the methodology would be exactly the same for other dimensions.
\subsection{The new formulation}
Different from \cite{PareschiPerthame:1996:FSM}, where a truncation technique is introduced in order to reduce the Boltzmann equation defined on the whole domain into an equation on a bounded domain, we introduce in this section a new formulation of the Boltzmann equation defined on $(-1,1)^3$ based on a change of variables technique. Let us define the following change of variables mapping
$$\varphi:\mathbb{R}^3\to(-1,1)^3,$$
\begin{equation}\label{ChangeVariableMapping}
\varphi(v)=(\varphi_1(v_1),\varphi_2(v_2),\varphi_3(v_3))=\left(\frac{v_1}{1+|v|},\frac{v_2}{1+|v|},\frac{v_3}{1+|v|}\right),
\end{equation}
where we restrict our attention to the norm $|v|=\max\{|v_1|,|v_2|,|v_3|\}$ with $v=(v_1,v_2,v_3)\in\mathbb{R}^3$. The inverse mapping $\varphi^{-1}$ of $\varphi$ reads
$$\varphi^{-1}:(-1,1)^3\to\mathbb{R}^3,$$
$$\varphi^{-1}(\bar{v})=(\varphi_1(\bar{v}_1),\varphi_2(\bar{v}_2),\varphi_3(\bar{v}_3))=\left(\frac{\bar{v}_1}{1-|\bar{v}|},\frac{\bar{v}_2}{1-|\bar{v}|},\frac{\bar{v}_3}{1-|\bar{v}|}\right).$$ 
The idea of our technique is to replace the variable $v$ in $\mathbb{R}^3$ by a new variable in $(-1,1)^3$ through the mapping $\varphi$. Based on this idea, we define the new density function $$g(t,\bar{v})=f(t,\varphi^{-1}(\bar{v})),$$
where $\bar{v}$ is the new variable in $(-1,1)^3$.
\\ With the notice that the Jacobian of the change of variable $\bar{v}\to v$ is $\frac{1}{(1+|v|)^4}$, we have
$$\int_{(-1,1)^3}|g(\bar{v})|^p(1-|\bar{v}|)^{-s-4}d\bar{v}=\int_{(-1,1)^3}|f(\varphi^{-1}(\bar{v}))|^p(1-|\bar{v}|)^{-s-4}d\bar{v}$$
$$=\int_{\mathbb{R}^3}|f(v)|^p(1+|v|)^{s+4}d(\varphi(v))=\int_{\mathbb{R}^3}|f(v)|^p(1+|v|)^sdv.$$
Therefore if $f(v)$ belongs to $L^1$ with the weight $(1+|v|)^s$, then $g(\bar{v})$ belongs to $L^1$ with the weight $(1-|\bar{v}|)^{-s-4}$. Notice that there are several one-to-one mappings that map $\mathbb{R}^3$ to $(-1,1)^3$ however the above property makes us choose to work on $\varphi$. 
\\ We now define 
$$L^{p}_s=\{f~~~|~~~ \int_{\mathbb{R}^3}|f(v)|^p(1+|v|)^{sp}dv<+\infty\},$$
and 
$$\mathcal{L}^{p}_s=\{f~~~|~~~ \int_{(-1,1)^3}|f(\bar{v})|^p(1-|\bar{v}|)^{-sp}d\bar{v}<+\infty\},$$
where $p$, $s$ are real numbers.
For further use, we also need
$$L^{p}(W)=\{f~~~|~~~ \int_{\mathbb{R}^3}|f(v)|^pW^p(v)dv<+\infty\},$$
$$\mathcal{L}^{p}(W')=\{f~~~|~~~ \int_{(-1,1)^3}|f(\bar{v})|^p(W'(\bar{v}))^pd\bar{v}<+\infty\},$$
where $W$, $W'$ are some positive weights.
\\ Moreover, we also need the notation $$<v>=\sqrt{1+|v|^2},~~~\forall v\in\mathbb{R}^3.$$
\\ The Boltzmann equation for $g$ is now
\begin{eqnarray}\label{BoltzmannFirstNewFormulation}
&&\partial_t g(t,x,\bar{v})+\frac{\bar{v}}{1-|\bar{v}|}\nabla_x g(t,x,\bar{v})=\int_{(-1,1)^3}\int_{\mathbb{S}^2}\frac{B(|\varphi^{-1}(\bar{v})-\varphi^{-1}(\bar{v}_*)|,\sigma)}{(1-|\bar{v}_*|)^{4}}\\\nonumber
&&\times\left[g\left(\varphi\left(\frac{\varphi^{-1}(\bar{v})+\varphi^{-1}(\bar{v}_*)}{2}-\sigma\frac{|\varphi^{-1}(\bar{v})-\varphi^{-1}(\bar{v}_*)|}{2}\right)\right)\right.\\\nonumber
&&\left.\times g\left(\varphi\left(\frac{\varphi^{-1}(\bar{v})+\varphi^{-1}(\bar{v}_*)}{2}+\sigma\frac{|\varphi^{-1}(\bar{v})-\varphi^{-1}(\bar{v}_*)|}{2}\right)\right)-g(\bar{v})g(\bar{v}_*)\right]d\sigma d\bar{v}_*,
\end{eqnarray}
which is our first new formulation of the Boltzmann equation.\\
Now define
$$h(t,\bar{v})=g(t,\bar{v})(1-|\bar{v}|)^{-4},$$
which implies 
$$\int_{(-1,1)^3}|h(\bar{v})|(1-|\bar{v}|)^{-s}d\bar{v}=\int_{\mathbb{R}^3}|f(v)|(1+|v|)^sdv.$$
This means if $f$ belongs to $L^1_s$  then $h$ belongs to $\mathcal{L}^1_s$. Notice that we define $h(t,\bar{v})=g(t,\bar{v})(1-|\bar{v}|)^{-4}$ to make our proof simpler, however the theoretical results remain the same if $h(t,\bar{v})=g(t,\bar{v})(1-|\bar{v}|)^{-n},$ with $n$ being any constant in $\mathbb{R}$, $n$ could be $0$. 
\\ The Boltzmann equation for $h$ then reads
\begin{eqnarray}\label{BoltzmannNewFormulationPre}\nonumber
&&\partial_t h(t,x,\bar{v})+\frac{\bar{v}}{1-|\bar{v}|}\nabla_x h(t,x,\bar{v})=\int_{(-1,1)^3}\int_{\mathbb{S}^2}B(|\varphi^{-1}(\bar{v})-\varphi^{-1}(\bar{v}_*)|,\sigma)\\
&&\times\left[\mathcal{C}(\bar{v},\bar{v}_*,\sigma)h\left(\varphi\left(\frac{\varphi^{-1}(\bar{v})+\varphi^{-1}(\bar{v}_*)}{2}-\sigma\frac{|\varphi^{-1}(\bar{v})-\varphi^{-1}(\bar{v}_*)|}{2}\right)\right)\right.\\\nonumber
&&\left.\times h\left(\varphi\left(\frac{\varphi^{-1}(\bar{v})+\varphi^{-1}(\bar{v}_*)}{2}+\sigma\frac{|\varphi^{-1}(\bar{v})-\varphi^{-1}(\bar{v}_*)|}{2}\right)\right)-h(\bar{v})h(\bar{v}_*)\right]d\sigma d\bar{v}_*,
\end{eqnarray}
where 
\begin{eqnarray}\label{DefinitionC}\nonumber
\mathcal{C}(\bar{v},\bar{v}_*,\sigma)&=&\left[1-\varphi\left(\frac{\varphi^{-1}(\bar{v})+\varphi^{-1}(\bar{v}_*)}{2}-\sigma\frac{|\varphi^{-1}(\bar{v})-\varphi^{-1}(\bar{v}_*)|}{2}\right)\right]^4\\\nonumber
& &\times\left[1-\varphi\left(\frac{\varphi^{-1}(\bar{v})+\varphi^{-1}(\bar{v}_*)}{2}+\sigma\frac{|\varphi^{-1}(\bar{v})-\varphi^{-1}(\bar{v}_*)|}{2}\right)\right]^4\\
& &\times(1-|\bar{v}|)^{-4}(1-|\bar{v}_*|)^{-4}.
\end{eqnarray}
Define
\begin{equation}\label{DefinitionB}
\mathcal{B}(\bar{v},\bar{v}_*,\sigma)=B(|\varphi^{-1}(\bar{v})-\varphi^{-1}(\bar{v}_*)|,\sigma),
\end{equation}
we get our second new formulation of the Boltzmann equation
\begin{eqnarray}\label{BoltzmannNewFormulation}\nonumber
&&\partial_t h(t,x,\bar{v})+\frac{\bar{v}}{1-|\bar{v}|}\nabla_x h(t,x,\bar{v})=\int_{(-1,1)^3}\int_{\mathbb{S}^2}\mathcal{B}(\bar{v},\bar{v}_*,\sigma)\\
&&\times\left[\mathcal{C}(\bar{v},\bar{v}_*,\sigma)h\left(\varphi\left(\frac{\varphi^{-1}(\bar{v})+\varphi^{-1}(\bar{v}_*)}{2}-\sigma\frac{|\varphi^{-1}(\bar{v})-\varphi^{-1}(\bar{v}_*)|}{2}\right)\right)\right.\\\nonumber
&&\left.\times h\left(\varphi\left(\frac{\varphi^{-1}(\bar{v})+\varphi^{-1}(\bar{v}_*)}{2}+\sigma\frac{|\varphi^{-1}(\bar{v})-\varphi^{-1}(\bar{v}_*)|}{2}\right)\right)-h(\bar{v})h(\bar{v}_*)\right]d\sigma d\bar{v}_*.
\end{eqnarray}
 The initial datum is now defined 
$$h_0(\bar{v})=(1-|\bar{v}|)^{-4}f_0(\varphi^{-1}(\bar{v})),$$
then
$$\int_{{(-1,1)}^3}h_0(\bar{v})\left(1+\frac{|\bar{v}|^2}{(1-|\bar{v}|)^2}\right)d\bar{v}<+\infty.$$
Let us mention that though the two new formulations seem to be complicated, we only use them for theoretical purposes. Our spectral equation $(\ref{SpectralEquation})$ is based on the former formulation of the equation. 
\section{Approximating the homogeneous Boltzmann equation: an adaptive spectral method}\label{SecWavelet}
In order to preserve the quantitative properties of the Boltzmann equation $(\ref{Property1})$, $(\ref{Property2})$ and $(\ref{Property3})$, as it is pointed out in the introduction, we cannot use the Fourier basis. We will construct a wavelet basis for  $L^2((-1,1)^3)$ in subsection $\ref{SubSecWaveletForL2}$. Our new spectral algorithm is defined in equation $(\ref{SpectralEquation})$ of subsection $\ref{SubSecSpectral}$. In subsection $\ref{SubSecAssumption}$ we discuss about the assumption that we need for the multiresolution analysis and the wavelet filtering technique.
\subsection{Wavelets for $L^2((-1,1)^3)$}\label{SubSecWaveletForL2}
{\it We first construct a wavelet multiresolution analysis for $L^2((-1,1))$.} Let $\phi$ be a {\it positive scaling function} which defines a {\it multiresolution analysis}, i.e., a ladder of embedded approximation subspaces of $L^2(\mathbb{R})$
$$\{0\} \to\dots V_1\subset V_0\subset V_{-1}\dots \to L^2(\mathbb{R})$$
such that $\phi_{j,k}=\{2^{-j/2}\phi(2^{-j}y-k)\}_{k\in\mathbb{Z}}$ constitutes an orthonormal basis for $V_j$. The 
{\it wavelet} $\psi$ is built to characterize the missing details between two adjacent levels of approximation. More concretely, $\{\psi_{j,k}\}_{k\in\mathbb{Z}}=\{2^{-j/2}\psi(2^{-j}y-k)\}_{k\in\mathbb{Z}}$ is an orthonormal basis of $W_j$ where
$$V_{j-1}=V_j\oplus W_j.$$
Multiresolution analysis is a frame work developed by Mallat \cite{Mallat:1989:MAW} and Meyer \cite{Meyer:1988:OFS}, we refer to these two pioneering works or the books \cite{Daubechies:1992:TLW}, \cite{Meyer:1993:WAA} for more details, examples and proofs.
\\ We now follow exactly the construction in \cite[Section 9.3]{Daubechies:1992:TLW} to build the same "periodized wavelets" for $L^2(-1,1)$. Notice that there are other ways besides this way (see \cite{Meyer:1991:OSI}, \cite{CohenDaubechiesJawerth:1993:MAW}). Suppose that the scaling function $\phi$ and the wavelet $\psi$ have reasonable decays, for example $|\phi(y)|, |\psi(y)|\leq C(1+|y|)^{-2-\epsilon},\epsilon>0$. Define
$$\phi_{j,k}^{per}(y)=\sum_{l\in\mathbb{Z}}\phi_{j,k}\left(\frac{y}{2}+l\right);~~~~~~\psi_{j.k}^{per}(y)=\sum_{l\in\mathbb{Z}}\psi_{j,k}\left(\frac{y}{2}+l\right);$$
and
$$V_j^{per}=\overline{Span\{\phi_{j,k}^{per},k\in\mathbb{Z}\}};~~~~~~W_j^{per}=\overline{Span\{\psi_{j,k}^{per},k\in\mathbb{Z}\}}.$$
Similar as \cite[Note 6, Chapter 9]{Daubechies:1992:TLW} we have 
$$\sum_{l\in\mathbb{Z}}\phi\left(\frac{x}{2}+l\right)=1,$$
which implies 
$$\phi_{j,k}^{per}=2^{-j/2}\sum_{l\in\mathbb{Z}}\phi(2^{-j-1}x-k+2^{-j}l)=2^{j/2} \mbox{ for } j\geq 0.$$
These facts mean $V_j^{per}$ for $j\geq 0$ are one dimensional spaces of constant functions. Moreover, similar as \cite[Note 7, Chapter 9]{Daubechies:1992:TLW} we have
$$\sum_{l\in\mathbb{Z}}\psi\left(\frac{x}{2}+l\right)=1,$$
and $W_j^{per}=\{0\}$ for $j\geq 0$. As a consequence, we only need to consider the spaces $V_j^{per}$ and $W_j^{per}$ with $j\leq 0$. According to the property of the multiresolution analysis $V_j$, $W_j$ $\subset$ $V_{j-1}$, then $V_j^{per}$, $W_j^{per}$ $\subset$ $V_{j-1}^{per}$. We also have that $W_j^{per}$ and $V_j^{per}$ are orthogonal 
\begin{eqnarray*}
& &\int_{-1}^1\psi_{j,k}^{per}(y)\phi^{per}_{j,k}(y)dy\\
&=&\sum_{l,l'\in\mathbb{Z}}2^{|j|}\int_{-1}^1\psi(2^{-j-1}y+2^{-j}l-k)\phi(2^{-j-1}y+2^{-j}l'-k')dy\\
&=&\sum_{l,l'\in\mathbb{Z}}2^{|j|}\int_{-1}^1\psi(2^{|j|-1}y+2^{|j|}(l-l')-k)\phi(2^{|j|-1}y-k')dy\\
&=&\sum_{r\in\mathbb{Z}}2^{|j|}\int_{-1}^1\psi(2^{|j|-1}y+2^{|j|}r-k)\phi(2^{|j|-1}y-k')dy=0.
\end{eqnarray*}
Similarly, in $W_j^{per}$, we have also that $\psi_{j,k}^{per}$ and $\psi_{j,k'}^{per}$ are orthogonal. Since $\phi^{per}_{j,k+m2^{|j|}}=\phi_{j,k}^{per}$ $\forall m\in\mathbb{Z}$, then the spaces $V_j^{per}$, $W_j^{per}$ are spanned by the $2^{|j|}$ functions obtained from $k=0,1,\dots,2^{|j|-1}$.  
\\ We therefore have a ladder of multiresolution spaces
$$V_0^{per}\subset V_{-1}^{per}\subset  V_{-2}^{per}\subset\dots\to L^2(-1,1)$$
with $$W_0^{per}\oplus V_0^{per}=V_{-1}^{per}\dots$$
and $\{\phi_{0,0}^{per}\}\cup\{\psi^{per}_{j,k};j\in -\mathbb{N}, k=0,\dots,2^{|j|}-1\}$ is an orthonormal basis of $L^2(-1,1)$.
\\ Define by $S_j\varkappa$ the orthogonal project of a function $\varkappa$ in $L^1(-1,1)$ onto $V_j$, similar as in \cite[Section 9.3]{Daubechies:1992:TLW} we then have the following remarkable property, which is not true with a Fourier basis 
$$\|S_j\varkappa\|_{L^1(-1,1)}\leq C_S \|\varkappa\|_{L^1(-1,1)},$$
and
$$\|S_j\varkappa\|_{L^\infty(-1,1)}\leq C_S \|\varkappa\|_{L^\infty(-1,1)},$$
where $C_S$ is a constant not depending on $j$ and $\varkappa$.   
\\ {\it We now construct a multiresolution analysis for $L^2((-1,1)^3)$.} Define
$$\Psi^{per}_{\bar{j},{k}}(\bar{y})=\psi^{per}_{j_1,k_1}(\bar{y}_1)\psi^{per}_{j_2,k_2}(\bar{y}_2)\psi^{per}_{j_3,k_3}(\bar{y}_3),$$
and
$$\Phi^{per}_{\bar{j},{k}}(\bar{y})=\phi^{per}_{j_1,k_1}(\bar{y}_1)\phi^{per}_{j_2,k_2}(\bar{y}_2)\phi^{per}_{j_3,k_3}(\bar{y}_3),$$
where $\bar{j}=(j_1,j_2,j_3)\in (-\mathbb{N})^3, {k}=(k_1,k_2,k_3)\in\{0,\dots, 2^{|j|}-1\}^3$,  $\bar{y}=(\bar{y}_1,\bar{y}_2,\bar{y}_3)\in (-1,1)^3$. Then $\{\Phi_{0,0}^{per}\}\cup\{\Psi^{per}_{\bar{j},k}\}$ is an orthonormal basis of $L^2((-1,1)^3)$. 
\\ Set $j\in-\mathbb{N}$ and put
$$\mathcal{V}_{|j|}=\{\Phi_{|j|,{k}}(\bar{y})=\Phi_{(j,j,j),{k}}^{per}(\bar{y}), {k}=(k_1,k_2,k_3)\in\{0,\dots,2^{|j|}-1\}^3\}.$$
then
$$\overline{\cup_{|j|\in\mathbb{N}}\mathcal{V}_{|j|}}=L^2((-1,1)^3),$$
which is the ladder of multiresolution spaces for $L^2((-1,1)^3)$ we need.
\\ Define by $P_{|j|}\varrho$ the orthogonal project of a function $\varrho$ in $L^1((-1,1)^3)$ onto $\mathcal{V}_{|j|}$, we also have the following properties 
\begin{equation}\label{L1Projection}
\|P_{|j|}\varrho\|_{L^1((-1,1)^3)}\leq C_P \|\varrho\|_{L^1((-1,1)^3)},
\end{equation}
and
\begin{equation}\label{LinftyProjection}
\|P_{|j|}\varrho\|_{L^\infty((-1,1)^3)}\leq C_P \|\varrho\|_{L^\infty((-1,1)^3)},
\end{equation}
where $C_P$ is a constant not depending on $j$ or $\varrho$. 
\\ Notice that since $\phi$ is a positive function, the following property is true
\begin{equation}\label{PositiveProjection}
\varrho\geq 0\Rightarrow P_{|j|}\varrho\geq 0.
\end{equation}
\subsection{The nonlinear approximation for the homogeneous Boltzmann equation}\label{SubSecSpectral}
First of all, we define the concept of a filter.
\begin{definition}\label{Filter}
Let $\varsigma$ be a function in $\mathcal{V}_N$, $N\in\mathbb{N}$ and 
$$\varsigma=\sum_{k=(0,0,0)}^{(2^N-1,2^N-1,2^N-1)}\varsigma_{N,k}\Phi_{N,k},$$
where $$\varsigma_{N,k}=\int_{(-1,1)^3}\varsigma\Phi_{N,k}d\bar{v}.$$
Set $\mathfrak{A}_N$ to be the set of indices $\{k=(k_1,k_2,k_3)~~~|~~~ 0\leq k_1,k_2,k_3\leq 2^N-1\}$, and suppose that $\mathfrak{B}_N$ is a subset of $\mathfrak{A}_N$. Define 
$$F_N\varsigma=\sum_{k\in\mathfrak{A}_N \backslash \mathfrak{B}_N}\varsigma_{N,k}\Phi_{N,k},$$ 
then $F_N$ is called a filter for $\varsigma$. In other words, a filter eliminates some components when we write $\varsigma$ as a linear combination of the basis $\{\Phi_{N,k}\}_{k\in \mathfrak{A}_N}$ of $\mathcal{V}_N$.
\\ Since our idea is to remove wavelets containing the extreme points of $(-1,1)^3$, we suppose that after the filtering process, $F_N\varsigma$ is supported in $(-\zeta_N,\zeta_N)^3$ with $0<\zeta_N<1$ and $F_N1$ is the characteristic function of $(-\zeta_N,\zeta_N)^3$. Notice that if $\bar{v}$ belongs to $(-\zeta_N,\zeta_N)^3$, then $v=\varphi^{-1}(\bar{v})$ belongs to $(-\frac{\zeta_N}{1-\zeta_N},\frac{\zeta_N}{1-\zeta_N})^3$.
For the sake of simplicity, we  denote 
\begin{equation}\label{NotationSum}
\sum_{k\in\mathfrak{A}_N \backslash \mathfrak{B}_N}=\sum_{k=0}^{2^N-1}.
\end{equation}
\end{definition}
We also suppose that there exist a positive constant $\epsilon^*$ and an open bounded set $\mathcal{D}\subset (-\frac{\zeta_N}{1-\zeta_N},\frac{\zeta_N}{1-\zeta_N})^3$ (for $N$ large enough) with non-zero measure such that 
\begin{equation}\label{InitialGreatThanEpsilon}
f_0>\epsilon^* \mbox{ in }  \mathcal{D}.
\end{equation}
We could assume that $\mathcal{D}$ is a ball. Similar as in \cite{FilbetMouhot:2011:ASM}, we only consider spectral methods for the homogeneous Boltzmann equation, which is written
\begin{equation}\label{BoltzmannHomogeneous}
\frac{\partial f}{\partial t} =Q(f,f), ~~~ v\in\mathbb{R}^3.
\end{equation}
After performing the change of variables, we get the new formulation of the homogeneous Boltzmann equation
\begin{eqnarray}\label{BoltzmannHomoNewFormulation}\nonumber
&&\partial_t h(t,\bar{v})=\int_{(-1,1)^3}\int_{\mathbb{S}^2}\mathcal{B}(\bar{v},\bar{v}_*,\sigma)\\
&&\times\left[\mathcal{C}(\bar{v},\bar{v}_*,\sigma)h\left(\varphi\left(\frac{\varphi^{-1}(\bar{v})+\varphi^{-1}(\bar{v}_*)}{2}-\sigma\frac{|\varphi^{-1}(\bar{v})-\varphi^{-1}(\bar{v}_*)|}{2}\right)\right)\right.\\\nonumber
&&\left.\times h\left(\varphi\left(\frac{\varphi^{-1}(\bar{v})+\varphi^{-1}(\bar{v}_*)}{2}+\sigma\frac{|\varphi^{-1}(\bar{v})-\varphi^{-1}(\bar{v}_*)|}{2}\right)\right)-h(\bar{v})h(\bar{v}_*)\right]d\sigma d\bar{v}_*,
\end{eqnarray}
where $\mathcal{B}$, $\mathcal{C}$ are defined in $(\ref{DefinitionC})$.
\\ Let $N$ be a positive integer and suppose that
$$h_N=\left(1+\frac{|\bar{v}|^2}{(1-|\bar{v}|)^2}\right)^{-1}F_NP_N\left(\left(1+\frac{|\bar{v}|^2}{(1-|\bar{v}|)^2}\right)h\right),$$
where $P_N$ is the orthogonal project onto the space $\mathcal{V}_N$ and $F_N$ is a filter. The reason that we multiply 
$$\left(1+\frac{|\bar{v}|^2}{(1-|\bar{v}|)^2}\right)$$
with $h$ before taking the projection $P_N$ is that 
\begin{eqnarray}\label{EnergyPreserveEquation}\nonumber
\int_{(-1,1)^3}P_N\left[\left(1+\frac{|\bar{v}|^2}{(1-|\bar{v}|)^2}\right)h\right]d\bar{v}&=&\int_{(-1,1)^3}\left(1+\frac{|\bar{v}|^2}{(1-|\bar{v}|)^2}\right)h(\bar{v})P_N (1)d\bar{v}\\
=\int_{(-1,1)^3}\left(1+\frac{|\bar{v}|^2}{(1-|\bar{v}|)^2}\right)h(\bar{v})d\bar{v}&=&\int_{\mathbb{R}^3}f(1+|v|^2)dv,
\end{eqnarray}
which means that we want to preserve the energy of the solution through the projection. We also denote
$$\tilde{h}_N=F_NP_N\left(\left(1+\frac{|\bar{v}|^2}{(1-|\bar{v}|)^2}\right)h\right),$$
 $$\mathcal{P}_N=F_NP_N,$$
 and
 $$\eta(\bar{v})= \left(1+\frac{|\bar{v}|^2}{(1-|\bar{v}|)^2}\right)^{-1}.$$
We therefore have
\begin{eqnarray}\label{SpectralMethod1}\nonumber
&&\partial_t \tilde{h}_N(t,\bar{v})\\\nonumber
&=&Q_N(\tilde{h}_N,\tilde{h}_N)=Q_N^+(\tilde{h}_N,\tilde{h}_N)-Q_N^-(\tilde{h}_N,\tilde{h}_N)\\\nonumber
&:=&\mathcal{P}_N\left\{\int_{(-1,1)^3}\int_{\mathbb{S}^2}\mathcal{B}(\bar{v},\bar{v}_*,\sigma)\right.\\\nonumber
&&\times\left[\eta(\bar{v})^{-1}\mathcal{C}(\bar{v},\bar{v}_*,\sigma)\tilde{h}_N\left(\varphi\left(\frac{\varphi^{-1}(\bar{v})+\varphi^{-1}(\bar{v}_*)}{2}-\sigma\frac{|\varphi^{-1}(\bar{v})-\varphi^{-1}(\bar{v}_*)|}{2}\right)\right)\right.\\
&&\times \tilde{h}_N\left(\varphi\left(\frac{\varphi^{-1}(\bar{v})+\varphi^{-1}(\bar{v}_*)}{2}+\sigma\frac{|\varphi^{-1}(\bar{v})-\varphi^{-1}(\bar{v}_*)|}{2}\right)\right)\\\nonumber
&&\times\eta\left(\varphi\left(\frac{\varphi^{-1}(\bar{v})+\varphi^{-1}(\bar{v}_*)}{2}-\sigma\frac{|\varphi^{-1}(\bar{v})-\varphi^{-1}(\bar{v}_*)|}{2}\right)\right)\\\nonumber
&&\left.\left.\times \eta\left(\varphi\left(\frac{\varphi^{-1}(\bar{v})+\varphi^{-1}(\bar{v}_*)}{2}+\sigma\frac{|\varphi^{-1}(\bar{v})-\varphi^{-1}(\bar{v}_*)|}{2}\right)\right)-\tilde{h}_N(\bar{v})\tilde{h}_N(\bar{v}_*)\eta(\bar{v}_*)\right]d\sigma d\bar{v}_*\right\},
\end{eqnarray}
or equivalently
\begin{eqnarray}\label{SpectralMethod2}\nonumber
&&\partial_t {h}_N(t,\bar{v})\\\nonumber
&=&Q_N(\tilde{h}_N,\tilde{h}_N)=Q_N^+(\tilde{h}_N,\tilde{h}_N)-Q_N^-(\tilde{h}_N,\tilde{h}_N)\\\nonumber
&:=&\mathbb{P}_N\left\{\int_{(-1,1)^3}\int_{\mathbb{S}^2}\mathcal{B}(\bar{v},\bar{v}_*,\sigma)\right.\\
&&\times\left[\mathcal{C}(\bar{v},\bar{v}_*,\sigma){h}_N\left(\varphi\left(\frac{\varphi^{-1}(\bar{v})+\varphi^{-1}(\bar{v}_*)}{2}-\sigma\frac{|\varphi^{-1}(\bar{v})-\varphi^{-1}(\bar{v}_*)|}{2}\right)\right)\right.\\\nonumber
&&\left.\left.\times {h}_N\left(\varphi\left(\frac{\varphi^{-1}(\bar{v})+\varphi^{-1}(\bar{v}_*)}{2}+\sigma\frac{|\varphi^{-1}(\bar{v})-\varphi^{-1}(\bar{v}_*)|}{2}\right)\right)-{h}_N(\bar{v}){h}_N(\bar{v}_*)\right]d\sigma d\bar{v}_*\right\},
\end{eqnarray}
where $\mathbb{P}_N$ is defined
$$\mathbb{P}_N(\varrho)=\eta\mathcal{P}_N(\eta^{-1}\varrho),$$
with some function $\varrho$, and the initial condition is
$$h_{0_{N}}=\mathbb{P}_N(h_0).$$
Suppose that 
$$\tilde{h}_N=\sum_{k=0}^{2^N-1}a_{N,k}\Phi_{N,k},$$
where $$a_{N,k}=\int_{(-1,1)^3}\tilde{h}_N\Phi_{N,k}d\bar{v}.$$
Then $(\ref{SpectralMethod1})$ and $(\ref{SpectralMethod2})$ are equivalent with the following system of ODEs for $k\in\mathcal{A}_N\backslash\mathcal{B}_N$
\begin{eqnarray}\label{ODESystem}\nonumber
&&\partial_t a_{N,k}=\int_{(-1,1)^3}\left\{\int_{(-1,1)^3}\int_{\mathbb{S}^2}\mathcal{B}(\bar{v},\bar{v}_*,\sigma)\right.\\\nonumber
&&\times\left[\mathcal{C}(\bar{v},\bar{v}_*,\sigma)\left(\sum_{l=0}^{2^N-1}a_
{N,l}\Phi_{N,l}\left(\varphi\left(\frac{\varphi^{-1}(\bar{v})+\varphi^{-1}(\bar{v}_*)}{2}-\sigma\frac{|\varphi^{-1}(\bar{v})-\varphi^{-1}(\bar{v}_*)|}{2}\right)\right)\right)\right.\\
&&\times \left(\sum_{l'=0}^{2^N-1}a_{N,l'}\Phi_{N,l'}\left(\varphi\left(\frac{\varphi^{-1}(\bar{v})+\varphi^{-1}(\bar{v}_*)}{2}+\sigma\frac{|\varphi^{-1}(\bar{v})-\varphi^{-1}(\bar{v}_*)|}{2}\right)\right)\right)\\\nonumber
&&\times\eta\left(\varphi\left(\frac{\varphi^{-1}(\bar{v})+\varphi^{-1}(\bar{v}_*)}{2}-\sigma\frac{|\varphi^{-1}(\bar{v})-\varphi^{-1}(\bar{v}_*)|}{2}\right)\right)\\\nonumber
&&\times \eta\left(\varphi\left(\frac{\varphi^{-1}(\bar{v})+\varphi^{-1}(\bar{v}_*)}{2}+\sigma\frac{|\varphi^{-1}(\bar{v})-\varphi^{-1}(\bar{v}_*)|}{2}\right)\right)\eta(\bar{v})^{-1}\\\nonumber
&&\left.\left.-\left(\sum_{l=0}^{2^N-1}a_{N,l}\Phi_{N,l}(\bar{v})\right)\left(\sum_{l'=0}^{2^N-1}a_{N,l'}\Phi_{N,l'}(\bar{v}_*)\right)\eta(\bar{v}_*)\right]d\sigma d\bar{v}_*\right\}\Phi_{N,k}d\bar{v}.
\end{eqnarray}
The resolution of this system gives an approximation of $h$. After solving the system $(\ref{ODESystem})$, we can get a full solution in $\mathbb{R}^3$ by the following mapping
\begin{equation}\label{FNfromhN}
f_N(v)=h_N(\varphi(v))(1+|v|)^{-4}.
\end{equation}
However, system $(\ref{ODESystem})$ is quite complicated and difficult to use in practical computations. We then introduce an equivalent form of it, which is easier to implement
\begin{eqnarray}\label{SpectralEquationPre}
&&\partial_t a_{N,k}\\\nonumber
& =&\sum_{l,l'=0}^{2^N-1}a_{N,l}a_{N,l'}\int_{\mathbb{R}^6\times\mathbb{S}^2}B(|v-v_*|,\sigma)\\\nonumber
& &\left[<v>^2\Phi_{N,l}(\varphi(v'_*))\frac{<v_*'>^{-2}}{(1+|v'_*|)^4}\Phi_{N,l'}(\varphi(v'))\frac{<v'>^{-2}}{(1+|v'|)^4}\right.\\\nonumber
& &-\left.\Phi_{N,l}(\varphi(v_*))\frac{<v_*>^{-2}}{(1+|v_*|)^4}\Phi_{N,l'}(\varphi(v))\frac{<v>^{-2}}{(1+|v|)^4}\right]\Phi_{N,k}(\varphi(v))d\sigma dv_*dv,
\end{eqnarray}
which gives an approximation of ${f(v)}{(1+|v|)^4}<v>^2$. As we mention above, the weight $(1+|v|)^4$ is put just to make the proof simpler, therefore, in practical computations, we can drop it to get the following equivalent system
\begin{eqnarray}\label{SpectralEquation}
&&\partial_t a_{N,k}\\\nonumber
& =&\sum_{l,l'=0}^{2^N-1}a_{N,l}a_{N,l'}\int_{\mathbb{R}^6\times\mathbb{S}^2}B(|v-v_*|,\sigma)\left[\frac{\Phi_{N,l}(\varphi(v'_*))}{<v_*'>^{2}}\frac{\Phi_{N,l'}(\varphi(v'))}{<v'>^{2}}<v>^2\right.\\\nonumber
& &-\left.\frac{\Phi_{N,l}(\varphi(v_*))}{<v_*>^{2}}\Phi_{N,l'}(\varphi(v))\right]\Phi_{N,k}(\varphi(v))d\sigma dv_*dv,~~~\forall k\in\mathcal{A}_N\backslash\mathcal{B}_N,
\end{eqnarray}
which is our {\bf spectral equation} and numerical simulations could be done with this system. The resolution of this system gives us a direct approximation 
\begin{equation}\label{SpectralEquationSol}
\sum_{k=0}^{2^N-1}a_{N,k}\Phi_{N,k}(\varphi(v)),
\end{equation}
of $f(v)<v>^2$. This formulation also gives us a clearer understanding about the mapping $\varphi$: its role is to stretch the support of $\Phi_{N,l}$ from $(-1,1)^3$ to $\mathbb{R}^3$ to get a new "nonlinear basis" on the whole space, which are "the approximants" of our nonlinear approximation. Notice that the weight $<v>^{-2}$ is put simply to preserve the energy of the solution due to $(\ref{EnergyPreserveEquation})$. In our opinion, the scheme
\begin{eqnarray*}
\partial_t a_{N,k}
& =&\sum_{l,l'=0}^{2^N-1}a_{N,l}a_{N,l'}\int_{\mathbb{R}^6\times\mathbb{S}^2}B(|v-v_*|,\sigma)\left[{\Phi_{N,l}(\varphi(v'_*))}{\Phi_{N,l'}(\varphi(v'))}\right.\\\nonumber
& &-\left.{\Phi_{N,l}(\varphi(v_*))}\Phi_{N,l'}(\varphi(v))\right]\Phi_{N,k}(\varphi(v))d\sigma dv_*dv,~~~\forall k\in\mathcal{A}_N\backslash\mathcal{B}_N,
\end{eqnarray*}
should give a good approximation for $f$ as well.
\\ As we mention in the introduction, if we choose $\phi$ to be the Haar scaling function, system $(\ref{SpectralEquation})$ becomes a Discrete Velocity Model, however, different from classical ones, $(\ref{SpectralEquation})$ has an adaptive mesh thanks to the mapping $\varphi$: the larger $|v|$ is, the coarser the mesh is, moreover it preserves the convolution structure of the collision operator. In other words, classical DVMs and Fourier-based spectral methods are in some sense non-adaptive cases of wavelet  spectral approximations. {\it Notice that in $(\ref{ODESystem})$, we take the basis created by $\phi$, but we can take the basis created by $\psi$ as well and the analysis would remain the same. In practice, we could choose the wavelets created by $\phi$ to be a function supported in $(-1/2,1/2)$ and bounded from above and below by $C_1$, $C_2>0$, a good example is the Haar function. }
\\ The existence and uniqueness of a solution of the equivalent systems $(\ref{ODESystem})$, $(\ref{SpectralEquationPre} )$ and $(\ref{SpectralEquation})$ is classical according to the theory of ODEs.  The numerical resolution of $(\ref{ODESystem})$ resolves the Boltzmann equation on the entire space with the same complexity with a normal spectral method. Notice that one of the main advantages of adaptive, nonlinear approximations is that they are much cheaper (\cite{DeVore:2007:OC},\cite{DeVore:1998:NA}).  Moreover, the simplicity of formulation $(\ref{SpectralEquation})$ gives us a lot of chance to construct fast nonlinear spectral algorithms for the equation, however this will be the topic of a different paper. 
\begin{proposition}\label{PropoExistenceSystemODE}
The system $(\ref{ODESystem})$  has a unique solution $\{a_{N,k}\}$ with  $a_{N,k}\in C^1(0,+\infty)$ $\forall k\in \mathcal{A}_N\backslash\mathcal{B}_N$. 
\end{proposition}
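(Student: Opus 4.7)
The plan is to recognize $(\ref{ODESystem})$ as a finite-dimensional quadratic ODE system and apply the Cauchy--Lipschitz (Picard--Lindel\"of) theorem, with a separate a priori bound to rule out blow-up in finite time.

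First I would rewrite the system in the compact form
\begin{equation*}
\frac{d a_{N,k}}{dt} \;=\; \sum_{l,l'\in \mathcal{A}_N\backslash\mathcal{B}_N} C_{k,l,l'}\, a_{N,l}\, a_{N,l'}, \qquad k\in \mathcal{A}_N\backslash\mathcal{B}_N,
\end{equation*}
where the constants $C_{k,l,l'}$ are the integrals against $\mathcal{B}$, $\mathcal{C}$, $\eta$ and the wavelet basis that appear on the right-hand side of $(\ref{ODESystem})$. The first step is to check that these coefficients are \emph{finite}. Because of the filter $F_N$, only wavelets $\Phi_{N,k}$ supported in $(-\zeta_N,\zeta_N)^3$ with $\zeta_N<1$ contribute. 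On this region both $\eta^{-1}$ and the weight $\mathcal{C}$ defined in $(\ref{DefinitionC})$ are uniformly bounded, and $|\varphi^{-1}(\bar v)-\varphi^{-1}(\bar v_*)|$ stays bounded, so the kernel $\mathcal{B}$ defined in $(\ref{DefinitionB})$ is bounded as well. Combined with the angular integrability assumption $(\ref{Assupmtionb1})$, each $C_{k,l,l'}$ is a finite real number.

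Next, the map
\begin{equation*}
a=(a_{N,k})\; \longmapsto\; F(a) \;=\; \Bigl(\sum_{l,l'} C_{k,l,l'}\, a_{l}\, a_{l'}\Bigr)_{k}
\end{equation*}
is a homogeneous quadratic polynomial on a finite-dimensional Euclidean space, hence $C^\infty$ and locally Lipschitz. The classical Cauchy--Lipschitz theorem gives a unique maximal solution $a(t)$ of class $C^1$ (in fact $C^\infty$) on some maximal interval $[0,T^*)$, with the initial data determined by $a_{N,k}(0)=\int_{(-1,1)^3} \mathcal{P}_N\bigl((1+|\bar v|^2/(1-|\bar v|)^2)h_0\bigr)\,\Phi_{N,k}\,d\bar v$, which is finite by the moment assumption on $h_0$. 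By the standard blow-up alternative, either $T^*=+\infty$ or $|a(t)|\to\infty$ as $t\to T^{*-}$.

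The only remaining step, which I expect to be the main obstacle, is to rule out the second alternative and promote the solution to $C^1(0,+\infty)$. A pure ODE argument based on the size of $F$ fails because quadratic right-hand sides can blow up in finite time. The cleanest route is to derive an a priori $L^2((-1,1)^3)$ estimate on $\tilde h_N=\sum_k a_{N,k}\Phi_{N,k}$; by orthonormality of the wavelet basis this gives $\sum_k a_{N,k}(t)^2 = \|\tilde h_N(t)\|_{L^2}^2$, hence a bound on $|a(t)|$. To obtain such an estimate one exploits: the positivity preservation $(\ref{PositiveProjection})$ of $P_N$, which combined with the filtered structure yields $h_N\geq 0$; the $L^1$-continuity bound $(\ref{L1Projection})$, which controls the mass of $h_N$ from the loss term $Q_N^-$; and the $L^\infty$ bound $(\ref{LinftyProjection})$, which together with the $L^1$ bound upgrades to $L^2$. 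These are precisely the uniform $L^1$ and $L^2$ estimates that the subsequent sections establish (cf.~Theorem $\ref{ThmBondh_N}$). Invoking them here precludes $|a(t)|\to\infty$ on any finite interval, so $T^*=+\infty$ and the proposition follows.
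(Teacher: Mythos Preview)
The paper does not actually give a proof of this proposition: it simply asserts, in the sentence preceding the statement, that ``the existence and uniqueness \ldots\ is classical according to the theory of ODEs'' and leaves it at that. Your local-existence argument (finite-dimensional quadratic vector field, coefficients finite because the filter forces compact support in $(-\zeta_N,\zeta_N)^3$, Cauchy--Lipschitz) is exactly the content behind that one-line claim, and is correct.

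The only point that deserves care is your global step. You invoke Theorem~\ref{ThmBondh_N} to rule out blow-up, but the proof of Theorem~\ref{ThmBondh_N} itself appeals to Proposition~\ref{PropoExistenceSystemODE} (for uniqueness, to identify $h_N$ with the weak limit $\check h_N$). The circularity is only apparent --- uniqueness on the maximal interval is immediate from Cauchy--Lipschitz, and that is all Theorem~\ref{ThmBondh_N} actually uses --- but you should say so explicitly. A cleaner route, closer in spirit to what the paper seems to have in mind, is to note that for \emph{fixed} $N$ the support of $h_N$ lies in the compact set $(-\zeta_N,\zeta_N)^3$, so the collision kernel $\mathcal B$ is bounded there; the positivity/$\mathcal L^1$ argument of Proposition~\ref{PropoL1EstimatehNLambda} then applies verbatim with $\lambda=\infty$ (the constant $C$ now depends on $N$, which is harmless here), yielding an a priori bound on $\|h_N(t)\|_{\mathcal L^1(\eta^{-1})}$ and hence, by equivalence of norms on the finite-dimensional space $F_N\mathcal V_N$, on $|a(t)|$. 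This avoids any forward reference.
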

\subsection{Assumptions on the multiresolution analysis and the filter}\label{SubSecAssumption}
\subsubsection{Energy preserving property}
\begin{assumption}\label{Assumption1}
Define $\kappa=\eta(\bar{v})^{-1}\mathcal{P}_N\chi_{(-1,1)^3}$, where $\chi_{(-1,1)^3}$ is the characteristic function of $(-1,1)^3$. Set $\varkappa(v)=\kappa(\varphi(v)),$ where $\varphi$ is the change of variables mapping defined in $(\ref{ChangeVariableMapping})$. In order to preserve the energy of the approximate solution, we impose the following assumption on $\mathcal{P}_N$
\begin{equation}\label{AssumpConcret1}
\varkappa(v'_*)+\varkappa(v')-\varkappa(v)-\varkappa(v_*)\leq 0,~~\forall (v,v_*)\in\left(-\frac{\zeta_N}{1-\zeta_N},\frac{\zeta_N}{1-\zeta_N}\right)^6.
\end{equation}
\end{assumption}
{\it We now explain why this assumption is needed in order to preserve the energy of the the approximate solution.} Take $\eta(\bar{v})^{-1}$ as a test function for $(\ref{SpectralMethod1})$
\begin{eqnarray}\label{Assump1Eq1}
&&\int_{(-1,1)^3}\partial_t {h}_N(t,\bar{v})\eta(\bar{v})^{-1}\\\nonumber
&=&\int_{(-1,1)^3}\eta(\bar{v})^{-1}\mathbb{P}_N\left\{\int_{(-1,1)^3}\int_{\mathbb{S}^2}\mathcal{B}(\bar{v},\bar{v}_*,\sigma)\right.\\\nonumber
&&\times\left[\mathcal{C}(\bar{v},\bar{v}_*,\sigma){h}_N\left(\varphi\left(\frac{\varphi^{-1}(\bar{v})+\varphi^{-1}(\bar{v}_*)}{2}-\sigma\frac{|\varphi^{-1}(\bar{v})-\varphi^{-1}(\bar{v}_*)|}{2}\right)\right)\right.\\\nonumber
&&\left.\left.\times {h}_N\left(\varphi\left(\frac{\varphi^{-1}(\bar{v})+\varphi^{-1}(\bar{v}_*)}{2}+\sigma\frac{|\varphi^{-1}(\bar{v})-\varphi^{-1}(\bar{v}_*)|}{2}\right)\right)-{h}_N(\bar{v}){h}_N(\bar{v}_*)\right]d\sigma d\bar{v}_*\right\}d\bar{v}\\\nonumber
&=&\int_{(-1,1)^3}\mathcal{P}_N\left\{\int_{(-1,1)^3}\int_{\mathbb{S}^2}\eta(\bar{v})^{-1}\mathcal{B}(\bar{v},\bar{v}_*,\sigma)\right.\\\nonumber
&&\times\left[\mathcal{C}(\bar{v},\bar{v}_*,\sigma){h}_N\left(\varphi\left(\frac{\varphi^{-1}(\bar{v})+\varphi^{-1}(\bar{v}_*)}{2}-\sigma\frac{|\varphi^{-1}(\bar{v})-\varphi^{-1}(\bar{v}_*)|}{2}\right)\right)\right.\\\nonumber
&&\left.\left.\times {h}_N\left(\varphi\left(\frac{\varphi^{-1}(\bar{v})+\varphi^{-1}(\bar{v}_*)}{2}+\sigma\frac{|\varphi^{-1}(\bar{v})-\varphi^{-1}(\bar{v}_*)|}{2}\right)\right)-{h}_N(\bar{v}){h}_N(\bar{v}_*)\right]d\sigma d\bar{v}_*\right\}d\bar{v}\\\nonumber
&=&\int_{(-1,1)^3}\left\{\int_{(-1,1)^3}\int_{\mathbb{S}^2}\mathcal{B}(\bar{v},\bar{v}_*,\sigma)\right.\\\nonumber
&&\times\left[\mathcal{C}(\bar{v},\bar{v}_*,\sigma){h}_N\left(\varphi\left(\frac{\varphi^{-1}(\bar{v})+\varphi^{-1}(\bar{v}_*)}{2}-\sigma\frac{|\varphi^{-1}(\bar{v})-\varphi^{-1}(\bar{v}_*)|}{2}\right)\right)\right.\\\nonumber
&&\times {h}_N\left(\varphi\left(\frac{\varphi^{-1}(\bar{v})+\varphi^{-1}(\bar{v}_*)}{2}+\sigma\frac{|\varphi^{-1}(\bar{v})-\varphi^{-1}(\bar{v}_*)|}{2}\right)\right)\\\nonumber
&&\left.\left.-{h}_N(\bar{v}){h}_N(\bar{v}_*)\right]d\sigma d\bar{v}_*\right\}\eta(\bar{v})^{-1}\mathcal{P}_N\chi{(-1,1)^3}d\bar{v}\\\nonumber
&=&\int_{(-1,1)^3}\left\{\int_{(-1,1)^3}\int_{\mathbb{S}^2}\mathcal{B}(\bar{v},\bar{v}_*,\sigma)\right.\\\nonumber
&&\times\left[\mathcal{C}(\bar{v},\bar{v}_*,\sigma){h}_N\left(\varphi\left(\frac{\varphi^{-1}(\bar{v})+\varphi^{-1}(\bar{v}_*)}{2}-\sigma\frac{|\varphi^{-1}(\bar{v})-\varphi^{-1}(\bar{v}_*)|}{2}\right)\right)\right.\\\nonumber
&&\times {h}_N\left(\varphi\left(\frac{\varphi^{-1}(\bar{v})+\varphi^{-1}(\bar{v}_*)}{2}+\sigma\frac{|\varphi^{-1}(\bar{v})-\varphi^{-1}(\bar{v}_*)|}{2}\right)\right)\\\nonumber
&&\left.\left.-{h}_N(\bar{v}){h}_N(\bar{v}_*)\right]d\sigma d\bar{v}_*\right\}\kappa(\bar{v})d\bar{v}.
\end{eqnarray}
Define 
$$
f_N(v)=h_N(\varphi(v))(1+|v|)^{-4},$$
then $(\ref{Assump1Eq1})$ is transformed into
\begin{eqnarray*}
&&\frac{d}{dt}\int_{\mathbb{R}^3}(1+|v|^2)f_Ndv\\
&=&\int_{\mathbb{R}^6\times\mathbb{S}^2}B(|v-v_*|,\sigma)[f_{N*}'f_{N}'-f_{N*}f_N]\varkappa(v)d\sigma dv_*dv\\
&=&\frac{1}{2}\int_{\mathbb{R}^6\times\mathbb{S}^2}B(|v-v_*|,\sigma)f_{N*}f_N[\varkappa(v_*')+\varkappa(v')-\varkappa(v_*)-\varkappa(v)]d\sigma dv_*dv\\
&\leq &0,
\end{eqnarray*}
if the assumption $\ref{Assumption1}$ is satisfied. Notice that we only need $(\ref{AssumpConcret1})$ on $\left(-\frac{\zeta_N}{1-\zeta_N},\frac{\zeta_N}{1-\zeta_N}\right)^6$ since if $(v,v_*)$ lies outside this interval, $f_{N*}f_N$=0. A consequence of this inequality is that the energy of the approximate solution is decreasing
$$\int_{\mathbb{R}^3}(1+|v|^2)f_N(t)dv\leq \int_{\mathbb{R}^3}(1+|v|^2)f_N(0)dv.$$
Later, we will prove that the mass, momentum and energy of the approximate solution converge to the mass, momentum and energy of the exact solution.
\\ {\it We now point out an example which satisfies our assumption $\ref{Assumption1}$.} Let us recall the simplest scaling function: Haar function (see \cite{Daubechies:1992:TLW})
\begin{equation}\label{HaarFunction}
\phi(y)=
\left\{ \begin{array}{ll}1 \mbox{ for } -\frac{1}{2}\leq y\leq\frac{1}{2},\vspace{.1in}\\
0 \mbox{ otherwise }.\end{array}\right.
\end{equation}
The corresponding $\phi_{j,k}^{per}$ are
\begin{eqnarray}\label{HaarPeriodizedBasis}
\phi^{per}_{j,k}(y)=
\left\{ \begin{array}{ll}2^{|j|-1}\chi_{(2^{1-|j|}(k-1/2),2^{1-|j|}(k+1/2)))} \mbox{ for } 0\leq k\leq 2^{|j|-1},\vspace{.1in}\\
2^{|j|-1}\chi_{(-2+2^{1-|j|}(k-1/2),-2+2^{1-|j|}(k+1/2)))} \mbox{ for }2^{|j|-1} <k\leq 2^{|j|} ,\vspace{.1in}\\
2^{|j|-1}\chi_{(-1,-1+2^{-|j|})\cup(1-2^{-|j|},1)} \mbox{ for } k=2^{|j|-1}.\end{array}\right.
\end{eqnarray}
We rearrange the indices of $\{\phi_{j,k}^{per}\}$ to get $\{\bar\phi_{j,k}^{per}\}$
\begin{eqnarray}\label{HaarPeriodizedBasisRearrange}
\left\{ \begin{array}{ll}
\bar\phi^{per}_{j,k}(y)=2^{|j|-1}\chi_{(2^{-|j|}(2k-1),2^{-|j|}(2k+1))} \mbox{ for }k=0,\pm 1,\dots,\pm (2^{|j|-1}-1) ,\vspace{.1in}\\
\bar\phi^{per}_{j,2^{|j|-1}}(y)=2^{|j|-1}\chi_{(-1,-1+2^{-|j|})\cup(1-2^{-|j|},1)}. \end{array}\right.
\end{eqnarray}
For the sake of simplicity, we still denote $\bar\phi^{per}_{j,k}$ by $\phi^{per}_{j,k}$. Then
$$\Phi_{|j|,{k}}(\bar{y})=\Phi^{per}_{j,{k}}(\bar{y})=\phi^{per}_{j,k_1}(\bar{y}_1)\phi^{per}_{j,k_2}(\bar{y}_2)\phi^{per}_{j,k_3}(\bar{y}_3),$$
where ${k}=(k_1,k_2,k_3)\in\{-2^{|j|-1}+1,\dots ,2^{|j|-1}\}^3$, $j\in-\mathbb{N}$,
and
$$\mathcal{V}_{|j|}=\{\Phi_{|j|,{k}}(\bar{y}), {k}=(k_1,k_2,k_3)\in\{-2^{|j|-1}+1,\dots ,2^{|j|-1}\}^3\}.$$
Let $\hat{k}_{|j|}$ be an integer in $\{0,\dots,2^{|j|-1}-1\}$. Let $\varsigma$ be any function in $\mathcal{V}_{|j|}$, $j\in-\mathbb{N}$ and 
$$\varsigma=\sum_{k=(-2^{|j|-1}+1,-2^{|j|-1}+1,-2^{|j|-1}+1)}^{(2^{|j|-1},2^{|j|-1},2^{|j|-1})}\varsigma_{|j|,k}\Phi_{|j|,k}=:\sum_{k=-2^{|j|-1}+1}^{2^{|j|-1}}\varsigma_{|j|,k}\Phi_{|j|,k},$$
where $$\varsigma_{|j|,k}=\int_{(-1,1)^3}\varsigma\Phi_{|j|,k}d\bar{v},$$
define the filter
\begin{equation}\label{HaarFilter}
F_{|j|}\varsigma=\sum_{k=(-\hat{k}_{|j|},-\hat{k}_{|j|},-\hat{k}_{|j|})}^{(\hat{k}_{|j|},\hat{k}_{|j|},\hat{k}_{|j|})}\varsigma_{|j|,k}\Phi_{|j|,k}=:\sum_{k=-\hat{k}_{|j|}}^{\hat{k}_{|j|}}\varsigma_{|j|,k}\Phi_{|j|,k}.
\end{equation}
In other words, the filter $F_{|j|}$ eliminates all of the components with indices ${k}=(k_1,k_2,k_3)$ where $\max\{|k_1|,|k_2|,|k_3|\}>\hat{k}_{|j|}$.
\begin{remark} In this paper, usually we choose the Haar multiresolution analysis as an example to illustrate our theory since it is the simplest multiresolution analysis. However, all of the results in our paper should remain the same if we choose $\phi$ to be a function supported in $(-1/2,1/2)$ and bounded from above and below by $C_1$, $C_2>0$.   Let us mention again that we choose the basis created by $\phi$ but we can choose the basis created by $\psi$ and the analysis would remain the same.
\end{remark}
\begin{proposition}\label{AssumptionPropo1} Let $N$ be a positive integer. Suppose that we take the Haar function $(\ref{HaarFunction})$ as the scaling function for the multiresolution analysis, $\{\Phi_{N,k}\}$ is a basis for $\mathcal{V}_N$ and $F_N$ is the filter defined by $(\ref{HaarFilter})$. Then $\mathcal{P}_N=F_NP_N$ satisfies assumption $\ref{Assumption1}.$
\end{proposition}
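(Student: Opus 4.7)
The plan is to compute $\mathcal{P}_N\chi_{(-1,1)^3}$ explicitly, substitute the result into the definition of $\varkappa$, and verify $(\ref{AssumpConcret1})$ by a short three-case analysis using only the collision invariant $|v|^2+|v_*|^2=|v'|^2+|v'_*|^2$.

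For the Haar scaling function each $\Phi_{N,k}$ is a constant multiple of the indicator of a dyadic box $I_{N,k}\subset(-1,1)^3$, and the family $\{I_{N,k}\}_{k\in\mathfrak{A}_N}$ tiles $(-1,1)^3$. Therefore $\chi_{(-1,1)^3}\in\mathcal V_N$, so $P_N\chi_{(-1,1)^3}=\chi_{(-1,1)^3}$, and a direct computation of the Haar coefficients yields the pointwise identity $\langle 1,\Phi_{N,k}\rangle\,\Phi_{N,k}=\chi_{I_{N,k}}$ for each $k$. The filter $(\ref{HaarFilter})$ retains exactly the indices with $\max\{|k_1|,|k_2|,|k_3|\}\le\hat k_N$, and the union of the corresponding interior boxes is the sub-cube $(-\zeta_N,\zeta_N)^3$ with $\zeta_N:=2^{-N}(2\hat k_N+1)$. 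Hence
$$\mathcal{P}_N\chi_{(-1,1)^3}=\chi_{(-\zeta_N,\zeta_N)^3}.$$
Plugging this into the definitions of $\kappa$ and $\varkappa$, using $\eta^{-1}(\varphi(v))=1+|v|^2$ together with the inclusion $\bigl(-\tfrac{\zeta_N}{1-\zeta_N},\tfrac{\zeta_N}{1-\zeta_N}\bigr)^3\subset\varphi^{-1}((-\zeta_N,\zeta_N)^3)=:S_N$, one then obtains
$$\varkappa(v)=(1+|v|^2)\,\chi_{S_N}(v).$$

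It then remains to check $(\ref{AssumpConcret1})$ for $(v,v_*)\in\bigl(-\tfrac{\zeta_N}{1-\zeta_N},\tfrac{\zeta_N}{1-\zeta_N}\bigr)^6$, on which $\varkappa(v)=1+|v|^2$ and $\varkappa(v_*)=1+|v_*|^2$. I split into three cases according to whether $v'$ and $v'_*$ lie in $S_N$. If both do, the left-hand side equals $|v'|^2+|v'_*|^2-|v|^2-|v_*|^2=0$ by energy conservation. If exactly one lies in $S_N$, say $v'$, then $\varkappa(v')+\varkappa(v'_*)=1+|v'|^2\le 1+|v|^2+|v_*|^2\le\varkappa(v)+\varkappa(v_*)-1$, since $|v'|^2\le|v|^2+|v_*|^2$. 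If neither lies in $S_N$, the left-hand side collapses to $-\varkappa(v)-\varkappa(v_*)<0$. In every case $(\ref{AssumpConcret1})$ is satisfied.

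The only delicate step is the first one: I must be sure that the single ``boundary'' basis element $\phi^{per}_{N,2^{N-1}}$, whose support meets $\{\pm 1\}$, is discarded by $(\ref{HaarFilter})$, so that the filtered projection of the constant remains supported on the interior sub-cube $(-\zeta_N,\zeta_N)^3$; this is exactly the purpose of the cutoff $\hat k_N<2^{N-1}$. Everything else in the argument reduces to the dyadic tiling of $(-1,1)^3$ by Haar cells together with the classical Boltzmann collision invariants.
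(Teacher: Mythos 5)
Your argument follows the same route as the paper: compute $\mathcal{P}_N\chi_{(-1,1)^3}$ for the Haar filter, deduce the closed form $\varkappa(v)=(1+|v|^2)\chi_{S_N}(v)$, and check $(\ref{AssumpConcret1})$. Where the paper simply asserts that the inequality ``follows directly'' from the formula for $\varkappa$, you make the verification explicit through a three-case analysis according to whether $v'$ and $v'_*$ remain in the cube $S_N$; this is a genuine and useful expansion of the sketch, and the three cases are each handled correctly.

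One subtlety should be made explicit. Your Cases 1 and 2 both rest on the collision invariant $|v'|^2+|v'_*|^2=|v|^2+|v_*|^2$ (Case 2 through the consequence $|v'|^2\le |v|^2+|v_*|^2$), which is an identity for the \emph{Euclidean} norm. The paper, however, immediately after writing the formula for $\varkappa(v)$, reminds the reader that $|v|=\max\{|v_1|,|v_2|,|v_3|\}$. Under the max norm the quantity $|v|^2+|v_*|^2$ is not conserved by the collision map, and one can produce $(v,v_*,\sigma)$ with all four pre- and post-collisional velocities in a large cube $S_N$ for which $|v'|^2_{\max}+|v'_*|^2_{\max}>|v|^2_{\max}+|v_*|^2_{\max}$, so Case 1 as you state it would fail. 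The correct reading is that the max norm governs only the mapping $\varphi$ and the indicator $\chi_{S_N}$, while the energy weight $\eta^{-1}$ must be understood with the Euclidean norm (consistent with the paper's identification of $\int(1+|v|^2)f_N\,dv$ with the energy in $(\ref{EnergyPreserveEquation})$, and with the chain $\eta(\varphi(v))^{-1}=1+|v|_{E}^2$). You should state this norm choice explicitly before invoking the collision invariant; as written, the proof silently assumes the Euclidean interpretation, which is the right one, but the reader may be misled by the paper's own max-norm annotation in the very line that defines $\varkappa$.
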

\begin{proof}
First, we can see directly that
$$P_N\chi_{(-1,1)^3}=\chi_{(-1,1)^3},$$
and
$$\mathcal{P}_N\chi_{(-1,1)^3}=\chi_{(-2^{-N}(2\hat{k}_N+1),2^{-N}(2\hat{k}_N+1))^3},$$
which implies
$$\kappa(\bar{v})=\eta(\bar{v})^{-1}\chi_{(-2^{-N}(2\hat{k}_N+1),2^{-N}(2\hat{k}_N+1))^3}.$$
Notice that if $$|\bar{v}|=\left|\frac{v}{1+|v|}\right|\leq 2^{-N}(2\hat{k}_N+1),$$
then 
$$|v|\leq \frac{2\hat{k}_N+1}{2^N-2\hat{k}_N-1}.$$
This leads to 
$$\varkappa(v)=\kappa(\varphi(v))=(1+|v|^2)\chi_{\left(-\frac{2\hat{k}_N+1}{2^N-2\hat{k}_N-1},\frac{2\hat{k}_N+1}{2^N-2\hat{k}_N-1}\right)^3},$$
where we recall that $|v|=|(v_1,v_2,v_3)|=\max\{|v_1|,|v_2|,|v_3|\}$. Inequality $(\ref{AssumpConcret1})$ follows directly from the above formula for $\varkappa(v)$.
\end{proof}
\subsubsection{Coercivity preserving property}
\begin{assumption}\label{Assumption2}
Let $N$ be a positive integer and $\vartheta$, $\vartheta'$ be two positive functions in $L^2((-1,1)^3)$. Define
$$\vartheta_N=\mathcal{P}_N\vartheta, \mbox{ and } \vartheta_N=\mathcal{P}_N\vartheta'.$$
Let $s$ be a  constant. We impose the following assumption on the multiresolution analysis and the filter $F_N$: There exist  constants $N_{0}$, $\mathcal{K}_1$, $\mathcal{K}_2$, $\mathcal{K}_3$, $\mathcal{K}_4$ not depending on $
\vartheta$, $\vartheta'$ such that
\begin{eqnarray}\label{AssumpConcret2}\nonumber
\forall N>N_0,&&\mathcal{K}_1(1-|\bar{v}|)^s \geq \mathcal{P}_N((1-|\bar{v}|)^s)\geq\mathcal{K}_2(1-|\bar{v}|)^s \mbox{ on } [-\zeta_N,\zeta_N]^3,\\
\mbox{ and }&&\mathcal{K}_3\vartheta_N\vartheta'_N\geq \mathcal{P}_N(\vartheta_N\vartheta')\geq\mathcal{K}_4\vartheta_N\vartheta'_N.
\end{eqnarray}
\end{assumption}
We now explain the meaning of this assumption. Suppose we take ${h}_N(t,\bar{v})(1-|\bar{v}|)^s$ as a test function for $(\ref{SpectralMethod2})$
\begin{eqnarray*}\nonumber
&&\int_{(-1,1)^3}\eta(\bar{v})^{-1}(1-|\bar{v}|)^s\partial_t{h}_N{h}_N\\
&=&\int_{(-1,1)^3}\eta(\bar{v})^{-1}(1-|\bar{v}|)^s{h}_N\mathbb{P}_N\left\{\int_{(-1,1)^3}\int_{\mathbb{S}^2}\mathcal{B}(\bar{v},\bar{v}_*,\sigma)\right.\\
&&\times\left[\mathcal{C}(\bar{v},\bar{v}_*,\sigma){h}_N\left(\varphi\left(\frac{\varphi^{-1}(\bar{v})+\varphi^{-1}(\bar{v}_*)}{2}-\sigma\frac{|\varphi^{-1}(\bar{v})-\varphi^{-1}(\bar{v}_*)|}{2}\right)\right)\right.\\\nonumber
&&\left.\left.\times {h}_N\left(\varphi\left(\frac{\varphi^{-1}(\bar{v})+\varphi^{-1}(\bar{v}_*)}{2}+\sigma\frac{|\varphi^{-1}(\bar{v})-\varphi^{-1}(\bar{v}_*)|}{2}\right)\right)-{h}_N(\bar{v}){h}_N(\bar{v}_*)\right]d\sigma d\bar{v}_*\right\}d\bar{v}\\
&=&\int_{(-1,1)^3}\mathcal{P}_N((1-|\bar{v}|)^s{h}_N)\left\{\eta(\bar{v})^{-1}\int_{(-1,1)^3}\int_{\mathbb{S}^2}\mathcal{B}(\bar{v},\bar{v}_*,\sigma)\right.\\
&&\times\left[\mathcal{C}(\bar{v},\bar{v}_*,\sigma){h}_N\left(\varphi\left(\frac{\varphi^{-1}(\bar{v})+\varphi^{-1}(\bar{v}_*)}{2}-\sigma\frac{|\varphi^{-1}(\bar{v})-\varphi^{-1}(\bar{v}_*)|}{2}\right)\right)\right.\\\nonumber
&&\left.\left.\times {h}_N\left(\varphi\left(\frac{\varphi^{-1}(\bar{v})+\varphi^{-1}(\bar{v}_*)}{2}+\sigma\frac{|\varphi^{-1}(\bar{v})-\varphi^{-1}(\bar{v}_*)|}{2}\right)\right)-{h}_N(\bar{v}){h}_N(\bar{v}_*)\right]d\sigma d\bar{v}_*\right\}d\bar{v}\\
&=&\int_{(-1,1)^3}\mathcal{P}_N((1-|\bar{v}|)^s{h}_N)\left\{\int_{(-1,1)^3}\int_{\mathbb{S}^2}\eta(\bar{v})^{-1}\mathcal{B}(\bar{v},\bar{v}_*,\sigma)\right.\\
&&\times\left[\mathcal{C}(\bar{v},\bar{v}_*,\sigma){h}_N\left(\varphi\left(\frac{\varphi^{-1}(\bar{v})+\varphi^{-1}(\bar{v}_*)}{2}-\sigma\frac{|\varphi^{-1}(\bar{v})-\varphi^{-1}(\bar{v}_*)|}{2}\right)\right)\right.\\\nonumber
&&\left.\left.\times {h}_N\left(\varphi\left(\frac{\varphi^{-1}(\bar{v})+\varphi^{-1}(\bar{v}_*)}{2}+\sigma\frac{|\varphi^{-1}(\bar{v})-\varphi^{-1}(\bar{v}_*)|}{2}\right)\right)\right]\right\}d\sigma d\bar{v}_*d\bar{v}\\
&&-\int_{(-1,1)^6\times\mathbb{S}^2}\eta(\bar{v})^{-1}\mathcal{B}(\bar{v},\bar{v}_*,\sigma){h}_N(\bar{v}){h}_N(\bar{v}_*)\mathcal{P}_N((1-|\bar{v}|)^s{h}_N)d\sigma d\bar{v}_*d\bar{v}.
\end{eqnarray*}
By assumption $\ref{Assumption2}$, the last term of the above equation could be bounded in the following way
\begin{eqnarray*}
&&\mathcal{K}_2\int_{(-1,1)^6\times\mathbb{S}^2}\eta(\bar{v})^{-1}\mathcal{B}(\bar{v},\bar{v}_*,\sigma){h}_N^2(\bar{v}){h}_N(\bar{v}_*)(1-|\bar{v}|)^sd\sigma d\bar{v}_*d\bar{v}\\
&\leq&\int_{(-1,1)^6\times\mathbb{S}^2}\eta(\bar{v})^{-1}\mathcal{B}(\bar{v},\bar{v}_*,\sigma){h}_N(\bar{v}){h}_N(\bar{v}_*)\mathcal{P}_N((1-|\bar{v}|)^s{h}_N)d\sigma d\bar{v}_*d\bar{v}\\
&\leq&\mathcal{K}_1\int_{(-1,1)^6\times\mathbb{S}^2}\eta(\bar{v})^{-1}\mathcal{B}(\bar{v},\bar{v}_*,\sigma){h}_N^2(\bar{v}){h}_N(\bar{v}_*)(1-|\bar{v}|)^sd\sigma d\bar{v}_*d\bar{v}.
\end{eqnarray*}
Define $f_N$ as in $(\ref{FNfromhN})$, we can transform the above equation into
\begin{eqnarray*}
&&\mathcal{K}_2\int_{\mathbb{R}^6}\int_{\mathbb{S}^2}B(|v-{v}_*|,\sigma)f_N^2f_{N*}(1+|v|^2)(1+|{v}|)^{4-s}d\sigma dv_*d{v}\\
&\leq&\int_{(-1,1)^6\times\mathbb{S}^2}\eta(\bar{v})^{-1}\mathcal{B}(\bar{v},\bar{v}_*,\sigma){h}_N(\bar{v}){h}_N(\bar{v}_*)\mathcal{P}_N((1-|\bar{v}|)^s{h}_N)d\sigma d\bar{v}_*d\bar{v}\\
&\leq&\mathcal{K}_1\int_{\mathbb{R}^6}\int_{\mathbb{S}^2}B(|v-{v}_*|,\sigma)f_N^2f_{N*}(1+|v|^2)(1+|{v}|)^{4-s}d\sigma dv_*d{v}.
\end{eqnarray*}
This estimate is crucial in our $\mathcal{L}^2$ estimate for $h_N$, since it preserves the following property of the Boltzmann equation
$$\int_{\mathbb{R}^6}\int_{\mathbb{S}^2}B(|v-{v}_*|,\sigma)f_N^2f_{N*}(1+|v|^2)(1+|{v}|)^{4-s}d\sigma dv_*d{v}$$
$$\geq C\int_{\mathbb{R}^3}f_N^2(1+|v|^2)(1+|{v}|)^{4+\gamma-s}d{v}.$$
We will discuss about this in more details in the convergence theory of the algorithm.
\begin{proposition}\label{AssumptionPropo2} Let $N$ be a positive integer. Suppose that we take the Haar function $(\ref{HaarFunction})$ as the scaling function for the multiresolution analysis, $\{\Phi_{N,k}\}$ is a basis for $\mathcal{V}_N$ and $F_N$ is the filter defined by $(\ref{HaarFilter})$. Then $\mathcal{P}_N=F_NP_N$ satisfies assumption $\ref{Assumption2}.$
\end{proposition}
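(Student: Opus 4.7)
The plan rests on two structural features of the Haar system. First, the orthogonal projection $P_N$ onto $\mathcal{V}_N$ coincides with the conditional expectation on the partition of $(-1,1)^3$ into dyadic cubes of side $2^{-N+1}$: for every such cube $C$ and every $g\in L^1((-1,1)^3)$,
\begin{equation*}
P_N g\bigr|_C \;=\; \frac{1}{|C|}\int_C g\,d\bar{v}.
\end{equation*}
Second, because $\zeta_N = 2^{-N}(2\hat{k}_N+1)$ is itself a dyadic breakpoint, the filter $F_N$ respects this partition exactly: it zeros the coefficient of every cube lying outside $(-\zeta_N,\zeta_N)^3$ and preserves the coefficient of every cube lying inside. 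Hence $\mathcal{P}_N g$ equals the piecewise-constant cell average on the interior cubes and vanishes on the exterior ones.

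For the second inequality, fix an interior cube $C$. Since $\vartheta_N$ is constant on $C$, a direct computation gives
\begin{equation*}
\mathcal{P}_N(\vartheta_N\vartheta')\bigr|_C \;=\; \frac{1}{|C|}\int_C \vartheta_N\vartheta'\,d\bar{v} \;=\; \vartheta_N|_C\cdot\frac{1}{|C|}\int_C\vartheta'\,d\bar{v} \;=\; \vartheta_N|_C\cdot\vartheta'_N|_C.
\end{equation*}
On an exterior cube $\vartheta_N$ vanishes, so the product $\vartheta_N\vartheta'$ vanishes with it and the filter removes anything that survives. Consequently $\mathcal{P}_N(\vartheta_N\vartheta')=\vartheta_N\vartheta'_N$ pointwise, and the two-sided bound holds with $\mathcal{K}_3=\mathcal{K}_4=1$.

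For the first inequality, I reduce to an oscillation bound for $(1-|\bar{v}|)^s$ on each interior cube. The prescribed range $\hat{k}_N\leq 2^{N-1}-1$ forces $1-\zeta_N\geq 2^{-N}$, while each cube has side $2^{-N+1}$; combining this with the Lipschitz estimate $\bigl||\bar{v}|_\infty-|\bar{v}'|_\infty\bigr|\leq|\bar{v}-\bar{v}'|_\infty$ yields, for every interior cube $C$,
\begin{equation*}
\frac{\max_C(1-|\bar{v}'|)}{\min_C(1-|\bar{v}'|)} \;\leq\; 1 + \frac{2^{-N+1}}{1-\zeta_N} \;\leq\; 3.
\end{equation*}
Hence $(1-|\bar{v}'|)^s$ on $C$ lies in an interval whose endpoints differ by at most the factor $3^{|s|}$. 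Both the pointwise value $(1-|\bar{v}|)^s$ and the cell average $\mathcal{P}_N((1-|\bar{v}|)^s)\bigr|_C$ belong to that same interval, so their ratio is trapped between $3^{-|s|}$ and $3^{|s|}$. Setting $\mathcal{K}_1=3^{|s|}$, $\mathcal{K}_2=3^{-|s|}$ and $N_0=1$ concludes the argument.

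The only real obstacle is the geometric bookkeeping: one must verify that the dyadic grid aligns with the filter threshold $\zeta_N$, so no cube is split in two by $F_N$ (which would break the clean identity used in the second inequality), and that the range $\hat{k}_N\leq 2^{N-1}-1$ keeps $1-\zeta_N$ on the same scale as the cube side. Without the latter, the oscillation ratio of $(1-|\bar{v}|)$ on a cube would blow up like $2^N$ and the first inequality would fail uniformly in $N$. Both properties are built into the Haar filter, so once they are in hand the proposition reduces to cell-by-cell identities.
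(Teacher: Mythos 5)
Your argument is correct and is essentially the same as the paper's: both treat the Haar projection as cell averaging on the dyadic partition, exploit that $\vartheta_N$ is piecewise constant so that $\mathcal{P}_N(\vartheta_N\vartheta')=\vartheta_N\vartheta'_N$ exactly (hence $\mathcal{K}_3=\mathcal{K}_4=1$), and reduce the weight bound to an oscillation estimate for $(1-|\bar{v}|)^s$ over each surviving cube with the same constants $3^{\pm|s|}$. The only cosmetic difference is in how that oscillation ratio is obtained: the paper evaluates it cube-by-cube as $\bigl(\tfrac{2^N-2|k_1|+1}{2^N-2|k_1|-1}\bigr)^{|s|}\le 3^{|s|}$ using the index $k_1$ directly, whereas you derive the same factor $3$ from the Lipschitz property of $|\cdot|_\infty$ together with the cube side $2^{-N+1}$ and the uniform lower bound $1-\zeta_N\ge 2^{-N}$ — two equivalent bookkeepings of the same estimate.
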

\begin{remark}In both propositions \ref{AssumptionPropo1} and \ref{AssumptionPropo2}, we can always take $\hat{k}_N$ to be $2^{N-1}-1$, and the filter $F_N$ only removes the components containing $\phi_{-N,2^{N-1}}^{per}$. 
\end{remark}
\begin{proof} Since the supports of $\Phi_{N,k}$ are disjoint and $\Phi_{N,k}\Phi_{N,k}=\Phi_{N,k}$,then
$$\vartheta_N\vartheta'_N=\mathcal{P}_N(\vartheta_N\vartheta').$$
Equation $(\ref{AssumpConcret2})$ is now equivalent with
\begin{equation}\label{Assump2Eq2}
\mathcal{K}_1(1-|\bar{v}|)^s\geq \mathcal{P}_N((1-|\bar{v}|)^s)\geq\mathcal{K}_2(1-|\bar{v}|)^s \mbox{ on } [-2^{-N}(2\hat{k}_N+1),2^{-N}(2\hat{k}_N+1)]^3.
\end{equation}
Set
$$\mathcal{P}_N[(1-|\bar{v}|)^s]=\sum_{k=-\hat{k}_N}^{\hat{k}_N}d_k\Phi_{N,k},$$
where
$$d_k=\int_{(-1,1)^3}(1-|\bar{v}|)^s\Phi_{N,k}d\bar{v},$$
we consider the coefficient $d_k\ne 0$ of $\mathcal{P}_N[(1-|\bar{v}|)^s]$. Suppose that 
$$\Phi_{N,k}(\bar{v})={\phi}^{per}_{-N,k_1}(\bar{v_1}){\phi}^{per}_{-N,k_2}(\bar{v_2}){\phi}^{per}_{-N,k_3}(\bar{v_3}),$$
with $|k_1|\geq |k_2|\geq |k_3|$. Hence, $|\bar{v}|=\max\{|\bar{v}_1|,|\bar{v}_2|,|\bar{v}_3|\}\in[2^{-N}(2|k_1|-1),2^{-N}(2|k_1|+1)]$ if $k_1\ne2^{N-1}$ and $|\bar{v}|\in[0,2^{-N}]$ if $k_1=2^{N-1}$. Therefore $1-|\bar{v}|\in[1-2^{-N}(2|k_1|+1),1-2^{-N}(2|k_1|-1)]$ if $k_1\ne2^{N-1}$ and $1-|\bar{v}|\in[1-2^{-N},1]$ if $k_1=2^{N-1}$. 
\\ If $k_1\ne2^{N-1}$ and $(1-|\bar{v}|)\in[(1-2^{-N}(2|k_1|+1)),(1-2^{-N}(2|k_1|-1))]$. 
\begin{eqnarray}\label{Assump2Eq3}\nonumber
1&\leq& \frac{\max_{|\bar{v}|\in[2^{-N}(2|k_1|-1),2^{-N}(2|k_1|+1)]}(1-|\bar{v}|)^s}{\min_{|\bar{v}|\in[2^{-N}(2|k_1|-1),2^{-N}(2|k_1|+1)]}(1-|\bar{v}|)^s}\\
&\leq & \left(\frac{2^N-2|k_1|+1}{2^N-2|k_1|-1}\right)^{|s|}\leq  3^{|s|}.
\end{eqnarray}
 If $k_1=2^{N-1}$ and $|\bar{v}|\in[1-2^{-N},1]$. 
\begin{eqnarray}\label{Assump2Eq4}\nonumber
1&\leq& \frac{\max_{|\bar{v}|\in[1-2^{-N},1]}(1-|\bar{v}|)^s}{\min_{|\bar{v}|\in[1-2^{-N},1]}(1-|\bar{v}|)^s}\\
&\leq & \left(\frac{2^N}{2^N-1}\right)^{|s|}\leq  2^{|s|}.
\end{eqnarray}
We still denote $[1-2^{-N},1]$ by $[1-2^{-N}(2|k_1|+1),1-2^{-N}(2|k_1|-1)]$ for $k_1=2^{N-1}$. Inequalities $(\ref{Assump2Eq3})$ and $(\ref{Assump2Eq4})$ imply
\begin{eqnarray*}
&&\int_{(-1,1)^3}(1-|\bar{v}|)^s{\phi}^{per}_{-N,k_1}(\bar{v}_1){\phi}^{per}_{-N,k_2}(\bar{v}_2){\phi}^{per}_{-N,k_3}(\bar{v}_3)d\bar{v}\\
&\geq&\int_{(-1,1)^3}\frac{1}{3^{|s|}}\max_{|\bar{v}|\in[1-2^{-N}(2|k_1|+1),1-2^{-N}(2|k_1|-1)] }(1-|\bar{v}|)^s\times\\
&&\times{\phi}^{per}_{-N,k_1}(\bar{v}_1){\phi}^{per}_{-N,k_2}(\bar{v}_2){\phi}^{per}_{-N,k_3}(\bar{v}_3)d\bar{v}\\
& \geq&\frac{1}{3^{|s|}}\max_{|\bar{v}|\in[1-2^{-N}(2|k_1|+1),1-2^{-N}(2|k_1|-1)] }(1-|\bar{v}|)^s\geq\frac{1}{3^{|s|}}(1-|\bar{v}|)^s,
\end{eqnarray*}
for all $\bar{v}$ in the support of $\Phi_{N,k}$. We deduce from this inequality that
\begin{equation}\label{Assump2Eq5}
d_k\Phi_{N,k}\geq \frac{1}{3^{|s|}}(1-|\bar{v}|)^s,
\end{equation}
for all $\bar{v}$ in the support of $\Phi_{N,k}$. Similarly, we also get
\begin{eqnarray*}
&&\int_{(-1,1)^3}(1-|\bar{v}|)^s{\phi}^{per}_{-N,k_1}(\bar{v}_1){\phi}^{per}_{-N,k_2}(\bar{v}_2){\phi}^{per}_{-N,k_3}(\bar{v}_3)d\bar{v}\\
&\leq&\int_{(-1,1)^3}3^{|s|}\min_{|\bar{v}|\in[1-2^{-N}(2|k_1|+1),1-2^{-N}(2|k_1|-1)] }(1-|\bar{v}|)^s\times\\
&&\times{\phi}^{per}_{-N,k_1}(\bar{v}_1){\phi}^{per}_{-N,k_2}(\bar{v}_2){\phi}^{per}_{-N,k_3}(\bar{v}_3)d\bar{v}\\
& \leq&{3^{|s|}}\min_{|\bar{v}|\in[1-2^{-N}(2|k_1|+1),1-2^{-N}(2|k_1|-1)] }(1-|\bar{v}|)^s \leq{3^{|s|}}(1-|\bar{v}|)^s,
\end{eqnarray*}
for all $\bar{v}$ in the support of $\Phi_{N,k}$, and 
\begin{equation}\label{Assump2Eq6}
d_k\Phi_{N,k}\leq{3^{|s|}}(1-|\bar{v}|)^s,
\end{equation}
for all $\bar{v}$ in the support of $\Phi_{-N,k}$.
We deduce from inequality $(\ref{Assump2Eq5})$ and $(\ref{Assump2Eq6})$ that
$${3^{|s|}}(1-|\bar{v}|)^s\geq \mathcal{P}_N((1-|\bar{v}|)^s)\geq\frac{1}{3^{|s|}}(1-|\bar{v}|)^s.$$
\end{proof}
\section{Convergence theory of the adaptive spectral method}
Consider again equation $(\ref{SpectralMethod2})$ 
\begin{eqnarray}\label{SpectralMethod3}\nonumber
&&\partial_t {h}_N(t,\bar{v})=\mathbb{P}_N\left\{\int_{(-1,1)^3}\int_{\mathbb{S}^2}\mathcal{B}(\bar{v},\bar{v}_*,\sigma)\right.\\
&&\times\left[\mathcal{C}(\bar{v},\bar{v}_*,\sigma){h}_N\left(\varphi\left(\frac{\varphi^{-1}(\bar{v})+\varphi^{-1}(\bar{v}_*)}{2}-\sigma\frac{|\varphi^{-1}(\bar{v})-\varphi^{-1}(\bar{v}_*)|}{2}\right)\right)\right.\\\nonumber
&&\left.\left.\times {h}_N\left(\varphi\left(\frac{\varphi^{-1}(\bar{v})+\varphi^{-1}(\bar{v}_*)}{2}+\sigma\frac{|\varphi^{-1}(\bar{v})-\varphi^{-1}(\bar{v}_*)|}{2}\right)\right)-{h}_N(\bar{v}){h}_N(\bar{v}_*)\right]d\sigma d\bar{v}_*\right\},
\end{eqnarray}
we will prove  that 
\begin{itemize}
\item The solution $h_N$ of $(\ref{SpectralMethod3})$ is  positive and uniformly bounded with respect to $N$ in  $\mathcal{L}^1_2$ norm.
\item $h_N$ has a Maxellian lower bound: for all $t_0>0$, there exist  $\hat{C}_1$, $\hat{C}_2$ $>0$ independent of $N$, such that for all $\bar{v}$ in the support of $h_{N}$
$$h_{N}(t,\bar{v})\geq\hat{C}_1\exp\left(-\hat{C}_2\left|\frac{|\bar{v}|}{1-|\bar{v}|}\right|^2\right),~~~\forall t>t_0.$$
\item  $h_N$  is uniformly bounded with respect to $N$ in  $\mathcal{L}^2_{-4}$ norm.
\item The approximate solution $h_N$ converges to the solution $h$ of  $(\ref{BoltzmannHomoNewFormulation})$ as $N$ tends to infinity. 
\end{itemize}
In order to prove the positivity and boundedness of $h_N$ in $\mathcal{L}^1_2$ and $\mathcal{L}^2_{-4}$ norms, we consider the approximate Boltzmann  equation with a bounded collision kernel as in \cite{Arkeryd:1972:OBE} and \cite{DiPernaLions:1989:OCP}  
\begin{eqnarray}\label{BoltzmannBoundedKernel}\nonumber
&&\partial_t {h}_{N,\lambda}(t,\bar{v})\\\nonumber
&=&Q_{N,\lambda}({h}_{N,\lambda},{h}_{N,\lambda})=Q_{N,\lambda}^+({h}_{N,\lambda},{h}_{N,\lambda})-Q_{N,\lambda}^-({h}_{N,\lambda},{h}_{N,\lambda})\\
&:=&\mathbb{P}_N\left\{\int_{(-1,1)^3}\int_{\mathbb{S}^2}B_\lambda(|\varphi^{-1}(\bar{v})-\varphi^{-1}(\bar{v}_*)|,\sigma)\right.\\\nonumber
&&\times\left[\mathcal{C}(\bar{v},\bar{v}_*,\sigma){h}_{N,\lambda}\left(\varphi\left(\frac{\varphi^{-1}(\bar{v})+\varphi^{-1}(\bar{v}_*)}{2}-\sigma\frac{|\varphi^{-1}(\bar{v})-\varphi^{-1}(\bar{v}_*)|}{2}\right)\right)\right.\\\nonumber
&&\times {h}_{N,\lambda}\left(\varphi\left(\frac{\varphi^{-1}(\bar{v})+\varphi^{-1}(\bar{v}_*)}{2}+\sigma\frac{|\varphi^{-1}(\bar{v})-\varphi^{-1}(\bar{v}_*)|}{2}\right)\right)\\\nonumber
&&\left.\left.-{h}_{N,\lambda}(\bar{v}){h}_{N,\lambda}(\bar{v}_*)\right]d\sigma d\bar{v}_*\right\},
\end{eqnarray}
with 
$$B_\lambda(|u|,\sigma):=|(u\wedge \lambda)|^\gamma b(\cos\theta)=|\min\{u,\lambda\}|^\gamma b(\cos\theta),$$
where $\lambda$ is a positive constant. For the sake of simplicity, we denote 
$$\mathcal{B}_\lambda(\bar{v},\bar{v}_*,\sigma)= B_\lambda\left(|\varphi^{-1}(\bar{v})-\varphi^{-1}(\bar{v}_*)|,\sigma\right).$$
Since $(\ref{BoltzmannBoundedKernel})$ is a system of ODEs, it admits a unique solution which is continuous in time. In this section we always assume that $N$ and $\lambda$ are sufficiently large. We will prove that $h_{N,\lambda}$ is bounded in $\mathcal{L}^1_2$ and $\mathcal{L}^2_{-4}$ and bounded from below by a Maxwellian uniformly with respect to $N$ and $\lambda$. By Nagumo's criterion, Dunford-Pettis theorem and Smulian theorem (see \cite{Edwards:1965:FUN} and \cite{McShane:1944:Int}), $h_{N}$ is bounded in $\mathcal{L}^1_2$ and $\mathcal{L}^2_{-4}$ and bounded from below by a Maxwellian uniformly with respect to $N$. The convergence of the algorithm then follows after some technical computations.
\subsection{Positivity and $\mathcal{L}^1$ estimate of $h_{N,\lambda}$}
\begin{proposition}\label{PropoL1EstimatehNLambda}The solution $h_{N,\lambda}(t)$ of $(\ref{BoltzmannBoundedKernel})$ is positive  for all time $t$ in $\mathbb{R}_+$, moreover
$$\|h_{N,\lambda}(t)\|_{\mathcal{L}^1(\eta^{-1})}\leq \|h_{0_N}\|_{\mathcal{L}^1(\eta^{-1})},\forall t\in\mathbb{R}_+.$$
\end{proposition}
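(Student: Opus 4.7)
The plan is to establish the two claims separately, positivity first and then the $\mathcal{L}^1(\eta^{-1})$ bound, both of which reduce to calculations already partially carried out in the excerpt.

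\smallskip

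\emph{Positivity via Nagumo's criterion on the coefficient ODE.} Recall that the author works with a wavelet basis (e.g.\ the Haar construction of Section \ref{SubSecWaveletForL2}) whose $\{\Phi_{N,k}\}_{k\in\mathfrak{A}_N\setminus\mathfrak{B}_N}$ are nonnegative and have pairwise disjoint supports $S_k$, so that $\tilde h_{N,\lambda}=\sum_k a_{N,k}(t)\Phi_{N,k}$ and $h_{N,\lambda}=\eta\tilde h_{N,\lambda}$ is nonnegative if and only if every $a_{N,k}\ge 0$. At $t=0$, positivity of the coefficients follows from $h_0\ge 0$, from (\ref{PositiveProjection}), and from the fact that $F_N$ merely drops certain coefficients. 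For $t>0$, I project equation (\ref{BoltzmannBoundedKernel}) onto $\Phi_{N,k}/\eta$, use the self-adjointness of $\mathcal{P}_N=F_NP_N$ and the disjoint-support property to get
\begin{equation*}
\dot a_{N,k}=G_k(a)-a_{N,k}\,\ell_k(a),\qquad G_k(a),\;\ell_k(a)\ge 0\text{ whenever }a\ge 0,
\end{equation*}
where the vanishing of the loss term on $\{a_{N,k}=0\}$ uses that $h_{N,\lambda}\equiv 0$ on $S_k$ in that case. Nagumo's invariance criterion applied to this quasi-monotone ODE system then yields $a_{N,k}(t)\ge 0$ for all $k$ and all $t\ge 0$, hence $h_{N,\lambda}(t)\ge 0$.

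\smallskip

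\emph{The $\mathcal{L}^1(\eta^{-1})$ bound via Assumption \ref{Assumption1}.} Once $h_{N,\lambda}\ge 0$, $\|h_{N,\lambda}(t)\|_{\mathcal{L}^1(\eta^{-1})}=\int_{(-1,1)^3}\eta^{-1}h_{N,\lambda}\,d\bar v$. Testing (\ref{BoltzmannBoundedKernel}) against $\eta^{-1}$, using $\eta^{-1}\mathbb{P}_N\varrho=\mathcal{P}_N(\eta^{-1}\varrho)$, and transferring $\mathcal{P}_N$ to the constant test function by self-adjointness, I obtain
\begin{equation*}
\frac{d}{dt}\int \eta^{-1}h_{N,\lambda}\,d\bar v=\int Q_\lambda(h_{N,\lambda},h_{N,\lambda})\,\kappa\,d\bar v,\qquad \kappa=\eta^{-1}\mathcal{P}_N\chi_{(-1,1)^3}.
\end{equation*}
Switching to the $v$-variable via $f_{N,\lambda}(v)=h_{N,\lambda}(\varphi(v))(1+|v|)^{-4}$, exactly as in (\ref{Assump1Eq1}) but with the truncated kernel $B_\lambda$, the left-hand side becomes $\tfrac{d}{dt}\int f_{N,\lambda}(1+|v|^2)\,dv$ and the right-hand side becomes $\int B_\lambda(|v-v_*|,\sigma)(f'_{N,\lambda,*}f'_{N,\lambda}-f_{N,\lambda,*}f_{N,\lambda})\varkappa(v)\,d\sigma dv_*dv$. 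The standard pre-/post-collisional symmetrization together with $v\leftrightarrow v_*$ rewrites this as
\begin{equation*}
\frac12\int B_\lambda\,f_{N,\lambda,*}f_{N,\lambda}\bigl[\varkappa(v')+\varkappa(v'_*)-\varkappa(v)-\varkappa(v_*)\bigr]d\sigma\,dv_*\,dv\le 0
\end{equation*}
by Assumption \ref{Assumption1} (the integrand vanishes outside $(-\tfrac{\zeta_N}{1-\zeta_N},\tfrac{\zeta_N}{1-\zeta_N})^6$ because $f_{N,\lambda}$ is supported there). Integrating in $t$ gives $\|h_{N,\lambda}(t)\|_{\mathcal{L}^1(\eta^{-1})}\le\|h_{0_N}\|_{\mathcal{L}^1(\eta^{-1})}$.

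\smallskip

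\emph{Main obstacle.} The truly nontrivial ingredient is the algebraic compatibility between the filtered projection $\mathbb{P}_N=\eta\mathcal{P}_N\eta^{-1}$ and the integration-by-parts-style manipulations used in the $\mathcal{L}^1$ estimate: one must ensure that testing against $\eta^{-1}$ produces exactly the quantity $\kappa$ of Assumption \ref{Assumption1} and that the classical symmetry of the Boltzmann bilinear form survives in the $v$-variable. The positivity step, while conceptually routine, also depends on the disjoint-support structure of the wavelet basis; in a general basis Nagumo's criterion would need to be replaced by a fixed-point/Duhamel iteration argument in the spirit of \cite{Arkeryd:1972:OBE}.
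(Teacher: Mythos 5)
Your proposal is correct, and for the $\mathcal{L}^1(\eta^{-1})$ bound it reproduces the paper's computation (the manipulation of (\ref{Assump1Eq1}) with the truncated kernel, Assumption \ref{Assumption1}, and the standard symmetrisation). The positivity step, however, is proved by a genuinely different route. The paper splits $h_{N,\lambda}=h_{N,\lambda,+}-h_{N,\lambda,-}$, bounds $Q^+_{N,\lambda}(h_{N,\lambda,+},h_{N,\lambda,-})$ and $Q^+_{N,\lambda}(h_{N,\lambda,-},h_{N,\lambda,+})$ in a weighted $\mathcal{L}^\infty$ norm via the Mouhot--Villani convolution estimate (Remark 3 of Theorem 2.1 in \cite{MouhotVillani:2004:RTS}) together with Assumption \ref{Assumption2}, and then runs a Gr\"onwall argument on $\|h_{N,\lambda,-}\eta^{-1}\|_{\mathcal{L}^\infty_{-4}}$ to show $h_{N,\lambda,-}\equiv 0$, first on a short interval $[0,\tau]$ after a crude Riccati-type $\mathcal{L}^1$ bound, then by iteration. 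You instead observe that in the coefficient coordinates $\{a_{N,k}\}$ the disjoint-support property of the wavelets forces the loss term to factor as $a_{N,k}\,\ell_k(a)$, so the vector field is quasipositive and Nagumo's invariance criterion for the positive orthant applies directly. Your approach is shorter and avoids the gain-operator functional inequalities entirely; it buys elementarity at the price of being tied to a basis with pairwise disjoint nonnegative supports (which you correctly flag), whereas the paper's $h_{+}/h_{-}$ splitting is written so as not to invoke the disjoint-support structure at this stage (even though the verifications of Assumptions \ref{Assumption1}--\ref{Assumption2} elsewhere in the paper also rely on the Haar case). One small point you should make explicit: Nagumo gives invariance only on the maximal existence interval of the quadratic ODE system, so you still need the a priori $\mathcal{L}^1(\eta^{-1})$ bound --- obtained from Assumption \ref{Assumption1} once positivity holds --- to conclude that the coefficients cannot blow up in finite time and hence that $T^*=+\infty$; the paper handles this circularity explicitly by its $[0,\tau]$, $[\tau,2\tau],\ldots$ iteration, and you should state the analogous bootstrap rather than asserting positivity ``for all $t\ge 0$'' in one step.
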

\begin{proof}
First, equation $(\ref{BoltzmannBoundedKernel})$ implies  
\begin{eqnarray*}\nonumber
&&\int_{(-1,1)^3}\partial_t |{h}_{N,\lambda}|\eta^{-1}d\bar{v}\\
&\leq&\int_{(-1,1)^3}\eta^{-1}\mathbb{P}_N\left\{\int_{(-1,1)^3}\int_{\mathbb{S}^2}\mathcal{B}_\lambda(\bar{v},\bar{v}_*,\sigma)\right.\\
&&\times\left[\mathcal{C}(\bar{v},\bar{v}_*,\sigma)\left|{h}_{N,\lambda}\left(\varphi\left(\frac{\varphi^{-1}(\bar{v})+\varphi^{-1}(\bar{v}_*)}{2}-\sigma\frac{|\varphi^{-1}(\bar{v})-\varphi^{-1}(\bar{v}_*)|}{2}\right)\right)\right|\right.\\\nonumber
&&\times\left| {h}_{N,\lambda}\left(\varphi\left(\frac{\varphi^{-1}(\bar{v})+\varphi^{-1}(\bar{v}_*)}{2}+\sigma\frac{|\varphi^{-1}(\bar{v})-\varphi^{-1}(\bar{v}_*)|}{2}\right)\right)\right|\\
&&\left.\left.+|{h}_{N,\lambda}(\bar{v})||{h}_{N,\lambda}(\bar{v}_*)|\right]d\sigma d\bar{v}_*\right\}d\bar{v}\\
&\leq&\int_{(-1,1)^3}\eta^{-1}\mathbb{P}_N\left\{\int_{(-1,1)^3}\int_{\mathbb{S}^2}\mathcal{B}_\lambda(\bar{v},\bar{v}_*,\sigma)\right.\\
&&\times\left[\mathcal{C}(\bar{v},\bar{v}_*,\sigma)\left|{h}_{N,\lambda}\left(\varphi\left(\frac{\varphi^{-1}(\bar{v})+\varphi^{-1}(\bar{v}_*)}{2}-\sigma\frac{|\varphi^{-1}(\bar{v})-\varphi^{-1}(\bar{v}_*)|}{2}\right)\right)\right|\right.\\\nonumber
&&\times\left| {h}_{N,\lambda}\left(\varphi\left(\frac{\varphi^{-1}(\bar{v})+\varphi^{-1}(\bar{v}_*)}{2}+\sigma\frac{|\varphi^{-1}(\bar{v})-\varphi^{-1}(\bar{v}_*)|}{2}\right)\right)\right|\\
&&\left.\left.-|{h}_{N,\lambda}(\bar{v})||{h}_{N,\lambda}(\bar{v}_*)|\right]d\sigma d\bar{v}_*\right\}d\bar{v}\\
&&+2\int_{(-1,1)^3}\eta^{-1}\mathbb{P}_N\left\{\int_{(-1,1)^3}\int_{\mathbb{S}^2}\mathcal{B}_\lambda(\bar{v},\bar{v}_*,\sigma)|{h}_{N,\lambda}(\bar{v})||{h}_{N,\lambda}(\bar{v}_*)|d\sigma d\bar{v}_*\right\}d\bar{v}\\
&\leq&2\int_{(-1,1)^3}\eta^{-1}\mathbb{P}_N\left\{\int_{(-1,1)^3}\int_{\mathbb{S}^2}\mathcal{B}_\lambda(\bar{v},\bar{v}_*,\sigma)|{h}_{N,\lambda}(\bar{v})||{h}_{N,\lambda}(\bar{v}_*)|d\sigma d\bar{v}_*\right\}d\bar{v},
\end{eqnarray*}
where the last inequality follows from assumption $\ref{Assumption1}$ and $(\ref{Assump1Eq1})$.
\\ We deduce from the above equation and $(\ref{L1Projection})$ that
$$\frac{d}{dt}\int_{(-1,1)^3}|h_{N,\lambda}|\eta^{-1}d\bar{v}\leq C\left[\int_{(-1,1)^3}|h_{N,\lambda}|\eta^{-1}d\bar{v}\right]^2,$$
where $C$ is some positive constant, which implies
\begin{equation}\label{L1EstimatehNlambdaEqM1}
\|h_{N,\lambda}\|_{\mathcal{L}^1(\eta^{-1})}\leq \frac{\|h_{0_N}\|_{\mathcal{L}^1(\eta^{-1})}}{1-C\|h_{0_N}\|_{\mathcal{L}^1(\eta^{-1})}t}.
\end{equation}
Set $$M=2\|h_{0_N}\|_{\mathcal{L}^1(\eta^{-1})},$$
then put
$$\tau<\frac{1}{2C\|h_{0_N}\|_{\mathcal{L}^1(\eta^{-1})}},$$
we get
$$\forall t\in[0,\tau]~~~~~~~~~~\|h_{N,\lambda}\|_{\mathcal{L}^1(\eta^{-1})}\leq M.$$
We now prove that on $[0,\tau]$, $h_{N,\lambda}$ is positive. Split $h_{N,\lambda}$ as $h_{N,\lambda}=h_{N,\lambda,+}-h_{N,\lambda,-}$ where $h_{N,\lambda,+}=\max\{h_{N,\lambda},0\}$ and $h_{N,\lambda,-}=\max\{-h_{N,\lambda},0\}$, we get
\begin{eqnarray}\label{L1EstimatehNlambdaEq0}
 Q_{N,\lambda}^+(h_{N,\lambda},h_{N,\lambda})
&=& Q^+_{N,\lambda}(h_{N,\lambda,+}-h_{N,\lambda,-},h_{N,\lambda,+}-h_{N,\lambda,-})\\\nonumber
&\geq & -Q_{N,\lambda}^+(h_{N,\lambda,+},h_{N,\lambda,-})-Q_{N,\lambda}^+(h_{N,\lambda,-},h_{N,\lambda,+}).
\end{eqnarray}
We consider the term
\begin{eqnarray*}
& &\left\|Q_{N,\lambda}^+(h_{N,\lambda,+},h_{N,\lambda,-})\eta^{-1}\right\|_{\mathcal{L}^\infty_{-4}}\\
&\leq&\left\|(1-|\bar{v}|)^4\mathcal{P}_N\left\{\int_{(-1,1)^3}\int_{\mathbb{S}^2}\mathcal{B}_\lambda(\bar{v},\bar{v}_*,\sigma)\eta^{-1}\right.\right.\\
&&\times\mathcal{C}(\bar{v},\bar{v}_*,\sigma)\left|{h}_{N,\lambda,+}\left(\varphi\left(\frac{\varphi^{-1}(\bar{v})+\varphi^{-1}(\bar{v}_*)}{2}-\sigma\frac{|\varphi^{-1}(\bar{v})-\varphi^{-1}(\bar{v}_*)|}{2}\right)\right)\right|\\\nonumber
&&\times\left.\left. \left|{h}_{N,\lambda,-}\left(\varphi\left(\frac{\varphi^{-1}(\bar{v})+\varphi^{-1}(\bar{v}_*)}{2}+\sigma\frac{|\varphi^{-1}(\bar{v})-\varphi^{-1}(\bar{v}_*)|}{2}\right)\right)\right| d\sigma d\bar{v}_*\right\}\right\|_{\mathcal{L}^\infty}\\\nonumber
&\leq&C\left\|\mathcal{P}_N[(1-|\bar{v}|)^4]\mathcal{P}_N\left\{\int_{(-1,1)^3}\int_{\mathbb{S}^2}\mathcal{B}_\lambda(\bar{v},\bar{v}_*,\sigma)\eta^{-1}\right.\right.\\
&&\times\mathcal{C}(\bar{v},\bar{v}_*,\sigma)\left|{h}_{N,\lambda,+}\left(\varphi\left(\frac{\varphi^{-1}(\bar{v})+\varphi^{-1}(\bar{v}_*)}{2}-\sigma\frac{|\varphi^{-1}(\bar{v})-\varphi^{-1}(\bar{v}_*)|}{2}\right)\right)\right|\\\nonumber
&&\times\left.\left. \left| {h}_{N,\lambda,-}\left(\varphi\left(\frac{\varphi^{-1}(\bar{v})+\varphi^{-1}(\bar{v}_*)}{2}+\sigma\frac{|\varphi^{-1}(\bar{v})-\varphi^{-1}(\bar{v}_*)|}{2}\right)\right)\right| d\sigma d\bar{v}_*
\right\}\right\|_{\mathcal{L}^\infty}\\\nonumber
&\leq&C\left\|\mathcal{P}_N\left\{\mathcal{P}_N[(1-|\bar{v}|)^4]\int_{(-1,1)^3}\int_{\mathbb{S}^2}\mathcal{B}_\lambda(\bar{v},\bar{v}_*,\sigma)\eta^{-1}\right.\right.\\
&&\times\mathcal{C}(\bar{v},\bar{v}_*,\sigma)\left|{h}_{N,\lambda,+}\left(\varphi\left(\frac{\varphi^{-1}(\bar{v})+\varphi^{-1}(\bar{v}_*)}{2}-\sigma\frac{|\varphi^{-1}(\bar{v})-\varphi^{-1}(\bar{v}_*)|}{2}\right)\right)\right|\\\nonumber
&&\times\left. \left. \left|{h}_{N,\lambda,-}\left(\varphi\left(\frac{\varphi^{-1}(\bar{v})+\varphi^{-1}(\bar{v}_*)}{2}+\sigma\frac{|\varphi^{-1}(\bar{v})-\varphi^{-1}(\bar{v}_*)|}{2}\right)\right)\right| d\sigma d\bar{v}_*\right\}\right\|_{\mathcal{L}^\infty}\\\nonumber
&\leq&C\left\|(1-|\bar{v}|)^4\int_{(-1,1)^3}\int_{\mathbb{S}^2}\mathcal{B}_\lambda(\bar{v},\bar{v}_*,\sigma)\eta^{-1}\right.\\
&&\times\mathcal{C}(\bar{v},\bar{v}_*,\sigma)\left|{h}_{N,\lambda,+}\left(\varphi\left(\frac{\varphi^{-1}(\bar{v})+\varphi^{-1}(\bar{v}_*)}{2}-\sigma\frac{|\varphi^{-1}(\bar{v})-\varphi^{-1}(\bar{v}_*)|}{2}\right)\right)\right|\\\nonumber
&&\times \left.\left|{h}_{N,\lambda,-}\left(\varphi\left(\frac{\varphi^{-1}(\bar{v})+\varphi^{-1}(\bar{v}_*)}{2}+\sigma\frac{|\varphi^{-1}(\bar{v})-\varphi^{-1}(\bar{v}_*)|}{2}\right)\right)\right|d\sigma d\bar{v}_*\right\|_{\mathcal{L}^\infty},
\end{eqnarray*}
where we use assumption $\ref{Assumption2}$ and $(\ref{LinftyProjection})$. Notice that the norms $\mathcal{L}^\infty$ and $L^\infty$ are taken on the support of the projection.
\\ Similar as $(\ref{FNfromhN})$, set
$${f}_{N,\lambda,-}(v)={h}_{N,\lambda,-}(\varphi(v))(1+|v|)^{-4},~~~{f}_{N,\lambda,+}(v)={h}_{N,\lambda,+}(\varphi(v))(1+|v|)^{-4},$$
then the above equation is now
\begin{eqnarray}\label{L1EstimatehNlambdaEq1}
& &\left\|Q_{N,\lambda}^+(h_{N,\lambda,+},h_{N,\lambda,-})\eta^{-1}\right\|_{\mathcal{L}^\infty_{-4}}\\\nonumber
&\leq&\left\|\int_{\mathbb{R}^3\times\mathbb{S}^2}B_\lambda(|v-v_*|,\sigma)f_{N,\lambda,+*}'f_{N,\lambda,-}'(1+|v|^2)d\sigma dv_*\right\|_{L^{\infty}}.
\end{eqnarray}
By Remark 3 of Theorem 2.1 \cite{MouhotVillani:2004:RTS}, we have
\begin{eqnarray*}
&&\left\|\int_{\mathbb{R}^3\times\mathbb{S}^2}B_\lambda(|v-v_*|,\sigma)f_{N,\lambda,+*}'f_{N,\lambda,-}'(1+|v|^2)d\sigma dv_*\right\|_{L^{\infty}}\\
&\leq&C \|f_{N,\lambda,+}\|_{L^1_2}\|f_{N,\lambda,-}\|_{L^\infty_2},
\end{eqnarray*}
which is equivalent with
\begin{eqnarray}\label{L1EstimatehNlambdaEq2}
&&\left\|\int_{\mathbb{R}^3\times\mathbb{S}^2}B_\lambda(|v-v_*|,\sigma)f_{N,\lambda,+*}'f_{N,\lambda,-}'(1+|v|^2)d\sigma dv_*\right\|_{L^\infty}\\\nonumber
&\leq&C \|h_{N,\lambda,+}\|_{\mathcal{L}^1(\eta)}\|h_{N,\lambda,-}\|_{\mathcal{L}^\infty_{-2}},
\end{eqnarray}
where $C$ is some positive constant.\\
Inequalities $(\ref{L1EstimatehNlambdaEq1})$ and $(\ref{L1EstimatehNlambdaEq2})$ lead to
\begin{equation}\label{L1EstimatehNlambdaEq3}
\left\|Q_{N,\lambda}^+(h_{N,\lambda,+},h_{N,\lambda,-})\eta^{-1}\right\|_{\mathcal{L}^\infty_{-4}}\leq C \|h_{N,\lambda,+}\|_{\mathcal{L}^1(\eta)}\|h_{N,\lambda,-}\|_{\mathcal{L}^\infty_{-2}}.
\end{equation}
Due to assumption $\ref{Assupmtionb2}$, we can permute $h_{N,\lambda,+}$ and $h_{N,\lambda,-}$ to get
\begin{equation}\label{L1EstimatehNlambdaEq4}
\left\|Q_{N,\lambda}^+(h_{N,\lambda,-},h_{N,\lambda,+})\eta^{-1}\right\|_{\mathcal{L}^\infty_{-4}}\leq C \|h_{N,\lambda,-}\|_{\mathcal{L}^\infty_{-2}}\|h_{N,\lambda,+}\|_{\mathcal{L}^1(\eta)}.
\end{equation}
Inequalities $(\ref{L1EstimatehNlambdaEqM1})$, $(\ref{L1EstimatehNlambdaEq0})$, $(\ref{L1EstimatehNlambdaEq3})$ and $(\ref{L1EstimatehNlambdaEq4})$ lead to
\begin{eqnarray}\label{L1EstimatehNlambdaEq5}\nonumber
& &\left\|Q_{N,\lambda}^+(h_{N,\lambda,-},h_{N,\lambda,+})\eta^{-1}\right\|_{\mathcal{L}^\infty_{-4}}+\left\|Q_{N,\lambda}^+(h_{N,\lambda,+},h_{N,\lambda,-})\eta^{-1}\right\|_{\mathcal{L}^\infty_{-4}}\\
&\leq& C(M)\|h_{N,\lambda,-}\|_{\mathcal{L}^\infty_{-2}} \mbox{                          on } [0,\tau],
\end{eqnarray}
where $C(M)$ is a constant depending on $M$.
\\ Equation $(\ref{L1EstimatehNlambdaEq0})$ implies
\begin{eqnarray*}
-\partial_th_{N,\lambda,-}\eta^{-1}&=&Q^+_{N,\lambda}(h_{N,\lambda},h_{N,\lambda})\eta^{-1}-Q^-_{N,\lambda}(h_{N,\lambda},h_{N,\lambda,-})\eta^{-1}\\
&\geq&-Q^+_{N,\lambda}(h_{N,\lambda,+},h_{N,\lambda,-})\eta^{-1}-Q^+_{N,\lambda}(h_{N,\lambda,-},h_{N,\lambda,+})\eta^{-1}\\
& &-Q^-_{N,\lambda}(h_{N,\lambda},h_{N,\lambda,-})\eta^{-1},
\end{eqnarray*}
which means 
\begin{eqnarray}\label{L1EstimatehNlambdaEq6}\nonumber
\partial_th_{N,\lambda,-}\eta^{-1}&\leq&Q^+_{N,\lambda}(h_{N,\lambda,+},h_{N,\lambda,-})\eta^{-1}+Q^+_{N,\lambda}(h_{N,\lambda,-},h_{N,\lambda,+})\eta^{-1}\\
& &+Q^-_{N,\lambda}(h_{N,\lambda},h_{N,\lambda,-})\eta^{-1}.
\end{eqnarray}
Since $$\left\|Q^-_{N,\lambda}(h_{N,\lambda},h_{N,\lambda,-})\eta^{-1}\right\|_{\mathcal{L}^\infty_{-4}}\leq C(M)\|h_{N,\lambda,-}\|_{\mathcal{L}^\infty_{-2}},$$
where $C(M)$ is some constant depending on $M$. Inequalities $(\ref{L1EstimatehNlambdaEq5})$ and $(\ref{L1EstimatehNlambdaEq6})$ lead to
$$\frac{d}{dt}\|h_{N,\lambda,-}\eta^{-1}\|_{\mathcal{L}^\infty_{-4}}\leq C'(M)\|h_{N,\lambda,-}\|_{\mathcal{L}^\infty_{-2}}\mbox{ on } [0,\tau],$$
which implies
$$\|h_{N,\lambda,-}(t)\eta^{-1}\|_{\mathcal{L}^\infty_{-4}}\leq \exp(C'(M)t)\|h_{0_N,-}\eta^{-1}\|_{\mathcal{L}^\infty_{-4}}=0, \mbox{ on }[0,\tau].$$
Hence $h_{N,\lambda,-}=0$ on $[0,\tau]$, which means $h_{N,\lambda}\geq 0$ on $[0,\tau]$. As a consequence, assumption $\ref{Assumption1}$ and $(\ref{Assump1Eq1})$ imply
$$\int_{(-1,1)^3}h_{N,\lambda}(t)\eta^{-1}d\bar{v}\leq \int_{(-1,1)^3}h_{0_N}\eta^{-1}d\bar{v}\mbox{ on } [0,\tau].$$
By repeating the argument for $[\tau,2\tau]$, $[2\tau,3\tau]$... we conclude that $h_{N,\lambda}$ is positive and $\|h_{N,\lambda}\|_{\mathcal{L}^1(\eta^{-1})}$ is bounded at all time.
\end{proof}
\subsection{Maxwellian lower bound for $h_{N,\lambda}$}
In this section, we establish a Maxwellian lower bound for the solution ${h}_{N,\lambda}$ of $(\ref{BoltzmannBoundedKernel})$ like in \cite{PulvirentiWennberg:1997:MLB} and \cite{Carleman:1933:TEI}. We first formulate some inequalities of Duhamel's type that will be the base of our estimates to obtain a Maxwellian lower bound for  ${h}_{N,\lambda}$. Notice that the results in this subsection still hold for the case $\lambda=\infty$. Consider the equation $(\ref{BoltzmannBoundedKernel})$ on ${h}_{N,\lambda}$, by assumption $\ref{Assumption2}$ we have
\begin{eqnarray}\label{BeforeGettingDuhamel}\nonumber
&&\partial_t {h}_{N,\lambda}(t,\bar{v})\\\nonumber
&=&Q_{N,\lambda}({h}_{N,\lambda},{h}_{N,\lambda})=Q_{N,\lambda}^+({h}_{N,\lambda},{h}_{N,\lambda})-Q_{N,\lambda}^-({h}_{N,\lambda},{h}_{N,\lambda})\\\nonumber
&=&Q_{N,\lambda}^+({h}_{N,\lambda},{h}_{N,\lambda})\\\nonumber
&&-\eta\mathcal{P}_N\left\{\eta^{-1}\int_{(-1,1)^3}\int_{\mathbb{S}^2}\mathcal{B}_\lambda(\bar{v},\bar{v}_*,\sigma){h}_{N,\lambda}(\bar{v}){h}_{N,\lambda}(\bar{v}_*)d\sigma d\bar{v}_*\right\}\\
&\geq&Q_{N,\lambda}^+({h}_{N,\lambda},{h}_{N,\lambda})\\\nonumber
&&-C\eta[\eta^{-1}{h}_{N,\lambda}(\bar{v})]\mathcal{P}_N\left\{\int_{(-1,1)^3}\int_{\mathbb{S}^2}\mathcal{B}_\lambda(\bar{v},\bar{v}_*,\sigma){h}_{N,\lambda}(\bar{v}_*)d\sigma d\bar{v}_*\right\}\\\nonumber
&\geq&Q_{N,\lambda}^+({h}_{N,\lambda},{h}_{N,\lambda})\\\nonumber
&&-C{h}_{N,\lambda}(\bar{v})\mathcal{P}_N\left\{\int_{(-1,1)^3}\int_{\mathbb{S}^2}\mathcal{B}_\lambda(\bar{v},\bar{v}_*,\sigma){h}_{N,\lambda}(\bar{v}_*)d\sigma d\bar{v}_*\right\}.
\end{eqnarray}
Set $$\mathcal{H}(\bar{v})=\mathcal{P}_N\left\{\int_{(-1,1)^3}\int_{\mathbb{S}^2}\mathcal{B}_\lambda(\bar{v},\bar{v}_*,\sigma){h}_{N,\lambda}(\bar{v}_*)d\sigma d\bar{v}_*\right\},$$
and
$$G_{t_1}^{t_2}(\bar{v})=\exp\left(-\int_{t_1}^{t_2}\mathcal{H}(t,\bar{v})dt\right),~~~\forall t_1,t_2>0.$$
Apply Duhamel's representation for inequality $(\ref{BeforeGettingDuhamel})$, we get
\begin{equation}\label{Duhamel}
h_{N,\lambda}(t,\bar{v})\geq h_{0_N}(\bar{v})G_0^t(\bar{v})+\int_0^t G_{\tau}^t(\bar{v})Q_{N,\lambda}^+({h}_{N,\lambda}(\tau,.),{h}_{N,\lambda}(\tau,.))(\bar{v})d\tau.
\end{equation}
In order to come back to the original formulation of the Boltzmann equation, we define 
\begin{equation}\label{hNlamdaComeBackTofNlambda}
f_{N,\lambda}(t,v)=h_{N,\lambda}(t,\varphi(v))(1+|v|)^{-4},
\end{equation}
and accordingly 
$$f_{0_N}(v)=h_{0_N}(\varphi(v))(1+|v|)^{-4}.$$
With this new function, $\mathcal{H}$ becomes
$$\mathcal{P}_N\left\{\int_{\mathbb{R}^3\times\mathbb{S}^2}B_\lambda(|v-v_*|,\sigma){f}_{N,\lambda}({v}_*)d\sigma d{v}_*\right\},$$
where for the sake of simplicity, we still denote by $\mathcal{P}_N$ the orthogonal project from $L^2((-1,1)^3)$ onto $\mathcal{V}_{N}$ but with the new variable $v$. 
\\ By proposition $\ref{PropoL1EstimatehNLambda}$, we can see that 
  $$\int_{(-1,1)^3}\int_{\mathbb{S}^2}\mathcal{B}_\lambda(\bar{v},\bar{v}_*,\sigma){h}_{N,\lambda}(\bar{v}_*)d\sigma d\bar{v}_*\leq C(1-|\bar{v}|)^{-\gamma},$$
  which means
    $$\mathcal{H}=\mathcal{P}_N\left\{\int_{(-1,1)^3}\int_{\mathbb{S}^2}\mathcal{B}_\lambda(\bar{v},\bar{v}_*,\sigma){h}_{N,\lambda}(\bar{v}_*)d\sigma d\bar{v}_*\right\}\leq C_H\mathcal{P}_N\left\{(1-|\bar{v}|)^{-\gamma}\right\},$$
    with the notice that $\mathcal{P}_N$ is a positive projection and $C_H$ is a constant not depending on $N$ and $\lambda$.
\\ Define $$\tilde{G}_{t_1}^{t_2}(\bar{v})=\exp\left(- C_H(t_2-t_1)\mathcal{P}_N[(1-|\bar{v}|)^{-\gamma}]\right),$$
we get 
$$G_{t_1}^{t_2}(\bar{v})\geq\tilde{G}_{t_1}^{t_2}(\bar{v}).$$
Using this inequality in $(\ref{Duhamel})$, we obtain
\begin{equation}\label{DuhamelTilde}
h_{N,\lambda}(t,\bar{v})\geq h_{0_N}(\bar{v})\tilde{G}_0^t(\bar{v})+\int_0^t \tilde{G}_{\tau}^t(\bar{v})Q_{N,\lambda}^+({h}_{N,\lambda}(\tau,.),{h}_{N,\lambda}(\tau,.))(\bar{v})d\tau.
\end{equation}
From $(\ref{DuhamelTilde})$, we deduce the following two inequalities, which will be used several times in the rest of this subsection
\begin{equation}\label{DuhamelTilde1}
h_{N,\lambda}(t,\bar{v})\geq \int_0^t \tilde{G}_{\tau}^t(\bar{v})Q_{N,\lambda}^+(h_{0_N}\tilde{G}_0^\tau,h_{0_N}\tilde{G}_0^\tau)(\bar{v})d\tau,
\end{equation}
and
\begin{eqnarray}\label{DuhamelTilde2}
&&h_{N,\lambda}(t,\bar{v})\\\nonumber
&\geq& \int_0^t \tilde{G}_{\tau}^t(\bar{v})Q_{N,\lambda}^+\left(\int_0^\tau \tilde{G}_{\tau_1}^\tau(\bar{v})Q_{N,\lambda}^+(h_{0_N}\tilde{G}_0^{\tau_1},h_{0_N}\tilde{G}_0^{\tau_1})(\bar{v})d\tau_1,h_{0_N}\tilde{G}_0^{\tau}\right)(\bar{v})d\tau.
\end{eqnarray}
With the notation
$$ \hat{G}_{\tau}^t(v)=\tilde{G}_{\tau}^t(\varphi(v)), $$
where $\varphi$ is defined in $(\ref{ChangeVariableMapping})$ and 
$$\mathcal{Q}_{\lambda}^+(F_2,F_1)(v)=\int_{\mathbb{R}^3}\int_{\mathbb{S}^{2}}B_\lambda(|v-v_*|,\cos\theta){F_1}'{F_2}'_*d\sigma dv_*, 
$$
for all measurable functions $F_1$ and $F_2$, we have
\begin{equation}\label{DuhamelTilde3}
f_{N,\lambda}(t,{v})\geq \int_0^t \hat{G}_{\tau}^t(v)\mathcal{P}_N\mathcal{Q}_{\lambda}^+(f_{0_N}\hat{G}_0^\tau,f_{0_N}\hat{G}_0^\tau)({v})d\tau,
\end{equation}
and
\begin{eqnarray}\label{DuhamelTilde4}
&&f_{N,\lambda}(t,{v})\\\nonumber
&\geq& \int_0^t \hat{G}_{\tau}^t({v})\mathcal{P}_N\mathcal{Q}_{\lambda}^+\left(\int_0^\tau \hat{G}_{\tau_1}^\tau({v})\mathcal{P}_N\mathcal{Q}_{\lambda}^+(f_{0_N}\hat{G}_0^{\tau_1},f_{0_N}\hat{G}_0^{\tau_1})({v})d\tau_1,f_{0_N}\hat{G}_0^\tau\right)({v})d\tau,
\end{eqnarray}
where $(\ref{DuhamelTilde4})$ follows from assumption $\ref{Assumption2}$.
\begin{lemma}\label{LemmaMaxwellian1}Let $t_0$ be any positive constant, there are constants $R$, $\alpha$, $\epsilon_0$ and $\bar{\mathcal{O}}\in\mathbb{R}^3$ independent of $N$ and $\lambda$ such that  for $N$, $\lambda$ sufficiently large and for all $t>t_0$, we have $f_{N,\lambda}(t,v)>\epsilon_0$ for all $|v-\bar{\mathcal{O}}|<\alpha$, $|\bar{\mathcal{O}}|<R$. Moreover, for all $n>0$, there exists a constant $\alpha_n$ such that for all time $t$
\begin{equation}\label{LemmaMaxwellian1Eq0}
\int_{B(0,3R)}f_{N,\lambda}(t,v)|v_*-v|^n dv>\alpha_n(1+|v_*|)^n.
\end{equation}
\end{lemma}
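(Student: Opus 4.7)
\medskip

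\textbf{Proof proposal for Lemma \ref{LemmaMaxwellian1}.} The plan is to mimic the Pulvirenti--Wennberg/Carleman construction of a Maxwellian lower bound, but working with the projected gain operator $\mathcal{P}_N\mathcal{Q}_\lambda^+$ via the Duhamel-type inequalities \eqref{DuhamelTilde3}--\eqref{DuhamelTilde4}. The starting ingredient is \eqref{InitialGreatThanEpsilon}: the initial datum satisfies $f_0>\epsilon^*$ on a ball $\mathcal{D}$. The projection $\mathbb{P}_N$ is a bounded, positive $\mathcal{L}^\infty$-projection (by \eqref{LinftyProjection}--\eqref{PositiveProjection}), so for $N$ large enough $f_{0_N}$ still dominates $\tfrac{\epsilon^*}{2}\chi_{\mathcal{D}'}$ on some sub-ball $\mathcal{D}'\subset \mathcal{D}$ of positive radius independent of $N$ (translating $\mathcal{D}$ inside the support of a single wavelet block and using the explicit form of $\Phi_{N,k}$).

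Next, I would extract a uniform lower bound for the Duhamel weights. By Proposition \ref{PropoL1EstimatehNLambda}, $\|f_{N,\lambda}\|_{L^1}\leq \|h_{0_N}\|_{\mathcal{L}^1(\eta^{-1})}$ uniformly in $N,\lambda$. Since $B_\lambda(|u|,\sigma)\leq |u|^\gamma b$ and $\mathcal{P}_N$ is $\mathcal{L}^\infty$-bounded, the collision frequency $\mathcal{H}$ is bounded by $C_H\mathcal{P}_N[(1-|\bar v|)^{-\gamma}]$, hence on any compact set $K\subset\mathbb{R}^3$ in the $v$-variable it stays below a constant $C_K$ independent of $N,\lambda$. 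Consequently
\begin{equation*}
\hat G_\tau^t(v)\geq \exp(-C_K(t-\tau)),\qquad v\in K,
\end{equation*}
which is a uniform positive weight. Plugging this into \eqref{DuhamelTilde3} with the lower bound on $f_{0_N}$, one application of the gain operator spreads the indicator of a ball: the classical computation shows $\mathcal{Q}_\lambda^+(\chi_{B(\mathcal{O},r)},\chi_{B(\mathcal{O},r)})$ is bounded below by a positive constant on a spherical shell around $\mathcal{O}$ of radius $\sim r/\sqrt{2}$ (using \eqref{Assupmtionb2} to keep $\theta$ away from $0$ and $\pi$). The projection $\mathcal{P}_N$ preserves positivity by \eqref{PositiveProjection}, and for $N$ large enough it preserves a positive lower bound on the interior of the shell. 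Iterating this spreading mechanism via \eqref{DuhamelTilde4} at a finite number of steps (depending on $t_0$), one covers a ball $B(\bar{\mathcal{O}},\alpha)$ with $|\bar{\mathcal{O}}|<R$ of uniform size. The constants $R,\alpha,\epsilon_0$ depend only on $t_0$, $\mathcal{D}$, $\epsilon^*$, and the global $L^1$ mass of $f_0$.

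For the moment estimate \eqref{LemmaMaxwellian1Eq0}, pick $\bar{\mathcal{O}}$ and $\alpha$ so that $B(\bar{\mathcal{O}},\alpha)\subset B(0,3R)$. Then
\begin{equation*}
\int_{B(0,3R)}f_{N,\lambda}(t,v)|v_*-v|^n\,dv\geq \epsilon_0\int_{B(\bar{\mathcal{O}},\alpha)}|v_*-v|^n\,dv.
\end{equation*}
For $|v_*|\leq 4R$ the right-hand side is bounded below by a positive constant depending only on $\alpha,R,n$; for $|v_*|>4R$ one has $|v_*-v|\geq |v_*|-R-\alpha\geq \tfrac{1}{2}|v_*|$ on $B(\bar{\mathcal{O}},\alpha)$, giving the growth $\sim |v_*|^n$. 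Combining the two regimes yields the required $\alpha_n(1+|v_*|)^n$ bound.

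The main obstacle, as I see it, is the interaction between the projection $\mathcal{P}_N$ and the spreading mechanism: the classical Pulvirenti--Wennberg argument relies on the gain operator being $\mathcal{Q}^+$ itself, whereas here it is composed with $\mathcal{P}_N$, which can destroy sharp pointwise lower bounds on thin spherical shells. This is why the \emph{positivity} \eqref{PositiveProjection} and the wavelet $\mathcal{L}^\infty$-stability \eqref{LinftyProjection} are essential: they ensure that if a collision output dominates $c\chi_A$ on a set $A$ of non-negligible measure, then $\mathcal{P}_N$ of that output dominates a positive multiple of $\chi_{A'}$ on the interior $A'$ of $A$ once $N$ is large enough that the wavelet atoms resolve $A$. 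This step must be carried out carefully, choosing the spreading radii at each iteration slightly larger than the wavelet scale $2^{-N}$, and it is the only place where the uniformity in $N$ is delicate.
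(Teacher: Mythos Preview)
Your overall plan (Duhamel inequalities + gain-operator spreading + positivity of $\mathcal{P}_N$) is in the right spirit, but it differs from the paper's route and has a few genuine gaps.

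First, the geometry of the spreading step is wrong: $\mathcal{Q}^+_\lambda(\chi_{B(\mathcal{O},r)},\chi_{B(\mathcal{O},r)})$ is supported in the \emph{ball} $B(\mathcal{O},r\sqrt{2})$ (since $|v-\mathcal{O}|^2+|v_*-\mathcal{O}|^2=|v'-\mathcal{O}|^2+|v'_*-\mathcal{O}|^2\le 2r^2$), with a quantitative lower bound on $B(\mathcal{O},r\sqrt{2}(1-\varsigma))$; it is not a shell, and the factor is $\sqrt{2}$, not $1/\sqrt{2}$. This is exactly the content of the paper's Lemma~\ref{LemmaMaxwellian2}. Second, your moment bound \eqref{LemmaMaxwellian1Eq0} must hold for \emph{all} $t\ge 0$, not only for $t>t_0$; you only invoke the ball lower bound, which you have established for $t>t_0$. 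The paper treats $0\le t\le t_0$ separately by going back to the first Duhamel term $f_{0_N}\hat G_0^t$ and the pointwise assumption \eqref{InitialGreatThanEpsilon} on $\mathcal{D}$.

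More importantly, the paper does not start from a single initial ball and iterate. Instead, following Pulvirenti--Wennberg, it partitions a large cube $K_R$ into subcubes and uses the uniform $L^1$ and $L^1_2$ bounds to find \emph{two} well-separated subcubes $K_1,K_2$ each carrying a fixed positive mass of $f_{0_N}$. With $F_i=f_{0_N}\chi_{K_i}$, the twice-iterated Duhamel \eqref{DuhamelTilde4} and Carleman's representation give a lower bound on a ball centered at the midpoint $\bar{\mathcal{O}}=(\mathcal{O}_1+\mathcal{O}_2)/2$. The projection is handled not by ``the wavelets resolve $A$'' but by a concrete duality trick: the integral over the plane $E_{v,u}$ is written as $\lim_{\epsilon\to 0}\tfrac{1}{2\epsilon}\int \cdots\,\chi_\epsilon\,dw$ for thin slabs, and self-adjointness of $\mathcal{P}_N$ moves the projection onto the test function $\chi_\epsilon$; Assumption~\ref{Assumption2} and a change of variables onto the sphere $S_{w_1,w_2}$ then reduce matters to showing $\mathcal{P}_N\chi_\epsilon$ is close enough to $\chi_\epsilon$ for $N$ large. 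Your treatment of $\mathcal{P}_N$ is too vague to substitute for this step, which is precisely where the interplay between the Carleman representation (integrals over lower-dimensional planes/spheres) and the three-dimensional projection $\mathcal{P}_N$ is resolved.
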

\begin{proof}Let  $R$ to be a positive constant and divide $K_R=(-R,R)^3\subset\left(-\frac{\zeta_N}{1-\zeta_N},\frac{\zeta_N}{1-\zeta_N}\right)^3$ into $\left(\frac{2R}{r}\right)^3$ cubes, centred at $\mathcal{O}_i$ and of length $r$. If $R$ is large enough, we have
$$\int_{K_R}f_0dv>\frac{1}{2}.$$
Since $$\lim_{N\to\infty}\int_{K_R}|f_{0_N}-f_0|dv=0,$$
there exists $N_0$ such that for $N>N_0$
$$\int_{K_R}f_{0_N}dv>\frac{1}{2}.$$
Since $\|f_{0}\|_{L^1_2}$ is bounded, we can infer that for $r$ sufficiently small
$$\int_{K_i}f_{0}dv<\frac{1}{4.3^3},$$
for all $i$. Therefore, no set of $27$ subcubes can contain more than half of the mass contained in $K_R$, which means there exist two subcubes $K_1$, $K_2$ with $|\mathcal{O}_1-\mathcal{O}_2|\geq 2\sqrt3 r$ satisfying
  $$\int_{K_i}{f}_0\geq\frac{1}{4(2R/r)^3}, ~~~i=1,2.$$
Since $$\lim_{N\to\infty}\int_{K_i}|f_{0_N}-f_0|dv=0, \forall i,$$
there exists a constant, still denoted by $N_0$ such that for all $N>N_0$
$$\int_{K_i}f_{0_N}dv<\frac{1}{4.3^3},\mbox{           and                } \int_{K_i}{f}_{0_N}\geq\frac{1}{8(2R/r)^3}, ~~~i=1,2.$$    
We define $\bar{\mathcal{O}}=(\mathcal{O}_1+\mathcal{O}_2)/2$ and $\alpha=(|\mathcal{O}_1-\mathcal{O}_2|-\sqrt{6}r)/(4\sqrt2)$ then $\alpha>(2\sqrt3-\sqrt6)r/(4\sqrt2)$. Let $w_1$, $w_2$ be in $K_1$ and $K_2$ and define $S_{w_1,w_2}$ to be the sphere taking the segment $\overline{w_1,w_2}$ as its diagonal. We can see that the ball with center $\bar{\mathcal{O}}$ and radius $2\alpha$ lies entirely inside $S_{w_1,w_2}$. Define $\chi_1$, $\chi_2$ and $\chi_R$ to be the characteristic functions of $K_1$, $K_2$ and $K_R$. Set 
$$F_1=f_{0_N}\chi_1;~~~ F_2=f_{0_N}\chi_2;~~~F_3=f_{0_N}\chi_R,$$
and use these functions in $(\ref{DuhamelTilde4})$, we get
\begin{eqnarray}\label{LemmaMaxwellian1Eq1}
&&f_{N,\lambda}(t,{v})\\\nonumber
&\geq& \int_0^t \hat{G}_{\tau}^t({v})\mathcal{P}_N\mathcal{Q}_{\lambda}^+\left(\int_0^\tau \hat{G}_{\tau_1}^\tau({v})\mathcal{P}_N\mathcal{Q}_{\lambda}^+(f_{0_N}\hat{G}_0^{\tau_1},f_{0_N}\hat{G}_0^{\tau_1})({v})d\tau_1,f_{0_N}\hat{G}_0^\tau\right)({v})d\tau\\\nonumber
&\geq& \int_0^t \hat{G}_{\tau}^t({v})\mathcal{P}_N\mathcal{Q}_{\lambda}^+\left(\int_0^\tau \hat{G}_{\tau_1}^\tau({v})\mathcal{P}_N\mathcal{Q}_{\lambda}^+(F_2\hat{G}_0^{\tau_1},F_1\hat{G}_0^{\tau_1})({v})d\tau_1,F_3\hat{G}_0^\tau\right)({v})d\tau.
\end{eqnarray}
Since $F_1$, $F_2$, $F_3$ are all supported in $K_R$, then $\hat{G}_{t_1}^{t_2}(v)$ could be considered as being supported in $\{v~~:~~~|v|<2R\}$ and
$$\hat{G}_{t_1}^{t_2}(v)=\exp\left(- C_H(t_2-t_1)\mathcal{P}_N((1+|{v}|)^{\gamma})\right)$$
$$\geq\exp\left(- C_H(t_2-t_1)\mathcal{P}_N((1+2R)^{\gamma})\right),$$
which, together with $(\ref{LemmaMaxwellian1Eq1})$ implies
\begin{eqnarray}\label{LemmaMaxwellian1Eq2}
f_{N,\lambda}(t,{v})&\geq& \int_0^t\int_0^\tau\exp(-C\mathcal{P}_N((1+2 R)^{\gamma})(t+\tau+\tau_1))d\tau_1d\tau \\\nonumber
& &\times\mathcal{P}_N\mathcal{Q}_{\lambda}^+\left(\mathcal{P}_N\mathcal{Q}_{\lambda}^+(F_2,F_1),F_3\right)({v}),
\end{eqnarray}
where $C$ is some constant not depending on $N$ and $\lambda$. 
\\ Similar as in the normal case \cite{PulvirentiWennberg:1997:MLB}, we assume without loss of generality that $b(\cos\theta)$ is bounded from below by a constant $b_0$. By Carleman's representation, 
\begin{eqnarray}\label{LemmaMaxwellian1Eq2b}
& &\mathcal{Q}_{\lambda}^+(\mathcal{P}_N\mathcal{Q}_{\lambda}^+(F_2,F_1),F_3)({v})\\\nonumber
&=&\int_{\mathbb{R}^3}F_3(v')\frac{(|v-v'|\wedge\lambda)^\gamma}{|v-v'|^2}\int_{E_{v,v'}} \mathcal{P}_N\mathcal{Q}_{\lambda}^+(F_2,F_1)(v'_*){b(\cos\theta)}dE(v'_*)dv',
\end{eqnarray}
where $E_{v,v'}$ is the plane containing $v$ and perpendicular to $v'-v$ and $dE(v'_*)$ is the Lebesgue measure on $E_{v,v'}$. Since 
$$F_1(v)=\int_{\mathbb{R}^3}F_1(w)\delta(v-w)dw \mbox{ and } F_2(v)=\int_{\mathbb{R}^3}F_2(w)\delta(v-w)dw,$$
denote $v'$ and $v'_*$ by $u$ and $w$ we have
\begin{eqnarray}\label{LemmaMaxwellian1Eq3}\nonumber
& &\int_{E_{v,v'}} \mathcal{P}_N\mathcal{Q}_{\lambda}^+(F_2,F_1)(v'_*){b(\cos\theta)}dE(v'_*)\\\nonumber
&\geq&\int_{E_{v,u}} \mathcal{P}_N[\mathcal{Q}_{\lambda}^+(F_2,F_1)(w){b_0}]dE(w)\\\nonumber
&\geq &\int_{E_{v,u}}\mathcal{P}_N [\int_{\mathbb{R}^3\times\mathbb{S}^2}(|w-w_*|\wedge\lambda)^\gamma{b_0}^2F_1'{F_2}'_*d\sigma dw_*]dE(w)\\
&\geq &\int_{E_{v,u}}\mathcal{P}_N [\int_{\mathbb{R}^6}F_1(w_1)F_2(w_2)\int_{\mathbb{R}^3\times\mathbb{S}^2}(|w-w_*|\wedge\lambda)^\gamma{b_0}^2\\\nonumber
& &\times\delta(w'-w_1)\delta(w'_*-w_2)d\sigma dw_*dw_1dw_2]dE(w)\\\nonumber
&\geq &\int_{E_{v,u}}\mathcal{P}_N [\int_{\mathbb{R}^6}F_1(w_1)F_2(w_2)\int_{\mathbb{R}^3\times\mathbb{S}^2}(|w-w_*|\wedge\lambda)^\gamma{b_0}^2\\\nonumber
& &\times\delta_1(w')\delta_2(w'_*)d\sigma dw_*dw_1dw_2]dE(w)
\end{eqnarray}
where $$\delta_1(v')=\delta(v'-w_1), \mbox{ and } \delta_2(v'_*)=\delta_2(v'_*-w_2).$$
Let $\chi_\epsilon$ be the characteristic function of $\{w|dist(w,E_{v,u})<\epsilon\}$, then
\begin{eqnarray}\label{LemmaMaxwellian1Eq4}\nonumber
& &\int_{E_{v,u}}\mathcal{P}_N [\int_{\mathbb{R}^6}F_1(w_1)F_2(w_2)\int_{\mathbb{R}^3\times\mathbb{S}^2}(|w-w_*|\wedge\lambda)^\gamma{b_0}^2\\\nonumber
& &\times\delta_1(w')\delta_2(w'_*)d\sigma dw_*dw_1dw_2]dE(w)\\\nonumber
&=&\lim_{\epsilon\to0} \frac{1}{2\epsilon}\int_{\mathbb{R}^3}\mathcal{P}_N [\int_{\mathbb{R}^6}F_1(w_1)F_2(w_2)\int_{\mathbb{R}^3\times\mathbb{S}^2}(|w-w_*|\wedge\lambda)^\gamma{b_0}^2\\\nonumber
& &\times\delta_1(w')\delta_2(w'_*)d\sigma dw_*dw_1dw_2]\chi_\epsilon dw\\
&=&\lim_{\epsilon\to0} \frac{1}{2\epsilon}\int_{\mathbb{R}^3} [\int_{\mathbb{R}^6}F_1(w_1)F_2(w_2)\int_{\mathbb{R}^3\times\mathbb{S}^2}(|w-w_*|\wedge\lambda)^\gamma{b_0}^2\\\nonumber
& &\times\delta_1(w')\delta_2(w'_*)d\sigma dw_*dw_1dw_2]\mathcal{P}_N(\chi_\epsilon(1+|w|)^4)(1+|w|)^{-4} dw\\\nonumber
&\geq&\lim_{\epsilon\to0} C\frac{1}{2\epsilon}\int_{\mathbb{R}^3} [\int_{\mathbb{R}^6}F_1(w_1)F_2(w_2)\int_{\mathbb{R}^3\times\mathbb{S}^2}(|w-w_*|\wedge\lambda)^\gamma{b_0}^2\\\nonumber
& &\times\delta_1(w')\delta_2(w'_*)d\sigma dw_*dw_1dw_2]\mathcal{P}_N\chi_\epsilon dw,
\end{eqnarray}
where the last inequality follows from assumption $\ref{Assumption2}.$
Moreover, we have that
\begin{eqnarray}\label{LemmaMaxwellian1Eq5}\nonumber
&&\lim_{\epsilon\to0} C\frac{1}{2\epsilon}\int_{\mathbb{R}^3} \int_{\mathbb{R}^3\times\mathbb{S}^2}(|w-w_*|\wedge\lambda)^\gamma{b_0}^2\delta_1(w')\delta_2(w_*')\mathcal{P}_N\chi_\epsilon(w) d\sigma dw_* dw\\\nonumber
&=&\lim_{\epsilon\to0} C\frac{1}{2\epsilon}\int_{\mathbb{R}^3} \int_{\mathbb{R}^3\times\mathbb{S}^2}(|w-w_*|\wedge\lambda)^\gamma{b_0}^2\delta_1(w)\delta_2(w_*)\mathcal{P}_N\chi_\epsilon(w') d\sigma dw_* dw\\
&=& \lim_{\epsilon\to0}C\frac{1}{2\epsilon}\frac{(|w_1-w_2|\wedge\lambda)^\gamma}{|w_1-w_2|^2} \int_{S_{w_1,w_2}}\frac{b_0^2}{\cos\theta}\mathcal{P}_N\chi_\epsilon(w')d\tilde{n},
\end{eqnarray}
where the first equality follows from the change of variables $dw_*dw\to dw'_*dw'$ and the second one is Carleman's change of variables (see \cite{Carleman:1957:PMC}, \cite{PulvirentiWennberg:1997:MLB}), $\tilde{n}$ denotes the measure on the surface of the sphere. Since for $\lambda$ large enough 
\begin{eqnarray*}\nonumber
&& \lim_{\epsilon\to0}C\frac{1}{2\epsilon}\frac{(|w_1-w_2|\wedge\lambda)^\gamma}{|w_1-w_2|^2} \int_{S_{w_1,w_2}}\frac{b_0^2}{\cos\theta}\chi_\epsilon(w')d\tilde{n}\\\nonumber
&=& \lim_{\epsilon\to0}C\frac{1}{2\epsilon}|w_1-w_2|^{\gamma-2}\int_{S_{w_1,w_2}}\frac{b_0^2}{\cos\theta}\chi_\epsilon(w')d\tilde{n}\\
&\geq& C\pi|w_1-w_2|^{\gamma-1}b_0^2\\\nonumber.
&\geq&C\pi\min\{(2R)^{\gamma-1},(2r)^{1-\gamma}\}b_0^2,
\end{eqnarray*}
then we can have for  $\lambda$, $N$ sufficiently large
\begin{eqnarray}\label{LemmaMaxwellian1Eq6}\nonumber
&&\lim_{\epsilon\to0} C\frac{1}{2\epsilon}\frac{(|w_1-w_2|\wedge\lambda)^\gamma}{|w_1-w_2|^2} \int_{S_{w_1,w_2}}\frac{b_0^2}{\cos\theta}\mathcal{P}_N\chi_\epsilon(w')d\tilde{n}\\
&\geq&\frac{Cb_0^2\pi\min\{(2R)^{\gamma-1},(2r)^{1-\gamma}\}}{2}=:\bar{C}.
\end{eqnarray}
Combine $(\ref{LemmaMaxwellian1Eq2b})$, $(\ref{LemmaMaxwellian1Eq3})$, $(\ref{LemmaMaxwellian1Eq4})$, $(\ref{LemmaMaxwellian1Eq5})$ and $(\ref{LemmaMaxwellian1Eq6})$, we get
\begin{eqnarray}\label{LemmaMaxwellian1Eq7}
f_{N,\lambda}(t,{v})&\geq& \bar{C}\int_0^t\int_0^\tau\exp(-C\mathcal{P}_N((1+2 R)^{\gamma})(t+\tau+\tau_1))d\tau_1 d\tau \\\nonumber
& &\times\int_{\mathbb{R}^3}F_3(u)\frac{(|v-u|\wedge\lambda)^\gamma}{|v-u|^2}\int_{\mathbb{R}^6}F_1(w_1)F_2(w_2)dw_1dw_2du\\\nonumber
&\geq&\int_0^t\int_0^\tau\exp(-C\mathcal{P}_N((1+2 R)^{\gamma})(t+\tau+\tau_1))d\tau_1d\tau \\\nonumber
& &\times\bar{C}(2R)^{\gamma-2}\frac{1}{2}\left(\frac{1}{8(2R/r)^3}\right)^2,
\end{eqnarray}
for $\lambda$ sufficiently large and $v\in supp(f_{N,\lambda})$. Inequality $(\ref{LemmaMaxwellian1Eq7})$ implies that for any positive constant $t_0$, there are constants $R$, $\alpha$, $\epsilon_0$ and $\bar{\mathcal{O}}\in\mathbb{R}^3$ independent of $N$ and $\lambda$ such that  for $N$ and $\lambda$ sufficiently large $f_{N,\lambda}(t,v)>\epsilon_0$ for all $|v-\bar{\mathcal{O}}|<\alpha$, $|\bar{\mathcal{O}}|<R$, $t>t_0$ and $v\in supp(f_{N,\lambda})$. Therefore 
\begin{equation}\label{LemmaMaxwellian1Eq8}
\int_{K_R}f_{N,\lambda}(t,{v})|v-v_*|^n dv\geq \epsilon_0\int_{B(\bar{\mathcal{O}},\alpha)}|v-v_*|^n dv\geq \alpha_n(1+|v_*|)^n, ~~~\forall t>t_0.
\end{equation}
Due to $(\ref{InitialGreatThanEpsilon})$, there exist a positive constant $\epsilon^*$ and an open bounded set $\mathcal{D}$ with non-zero measure such that $f_0>\epsilon^*$ in $\mathcal{D}$. With the assumption that $R$, $N$ are large, we can suppose that $\mathcal{D}\subset K_R$, then $f_{0_N}>\epsilon^*$ in $\mathcal{D}$. Fix a small time $t_0$, inequality $(\ref{DuhamelTilde})$ implies that for all time $t\leq t_0$
\begin{eqnarray}\label{LemmaMaxwellian1Eq9}
\int_{K_R}f_{N,\lambda}(t,{v})|v-v_*|^ndv &\geq &\int_{K_R}f_{0_N}\hat{G}_0^{t}|v-v_*|^ndv\\\nonumber
&\geq &\exp(-(1+2 R)^\gamma t_0)\int_{K_R}f_{0_N}|v-v_*|^ndv\\\nonumber
&\geq &\frac{\exp(-(1+2 R)^\gamma t_0)}{2}\int_{\mathcal{D}}\epsilon^*|v-v_*|^ndv\\\nonumber
&\geq& \alpha_n(1+|v_*|)^n
\end{eqnarray}
the third inequality is satisfied when $N$ is sufficiently large. Inequalities  $(\ref{LemmaMaxwellian1Eq8})$ and $(\ref{LemmaMaxwellian1Eq9})$ imply the conclusion of the lemma.
\end{proof}
\begin{lemma}\label{LemmaMaxwellian2}Suppose that there is $\bar{\mathcal{O}}\in\mathbb{R}^3$ with $|\bar{\mathcal{O}}|<R$, such that $F(v)>\epsilon$ for $|v-\bar{\mathcal{O}}|<\alpha$, then there exist $C,\varsigma,\epsilon>0$, which do not depend on $\lambda$,  such that
$$\mathcal{Q}_\lambda^+(F,F)(v)>C\alpha^{3+\gamma}\varsigma^{5/2}\epsilon^2,$$
for all $v$, $|v-\bar{\mathcal{O}}|<\alpha\sqrt2(1-\varsigma)$.  
\end{lemma}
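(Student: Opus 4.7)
The plan is to proceed as follows. By translation--equivariance of $\mathcal{Q}_\lambda^+$, I may assume without loss of generality that $\bar{\mathcal{O}}=0$, so that $F>\epsilon$ on $B(0,\alpha)$ and the target inequality is required for all $v\in B(0,\alpha\sqrt{2}(1-\varsigma))$. The core strategy is to discard most of the $(v_*,\sigma)$-integration in
\[
\mathcal{Q}_\lambda^+(F,F)(v)=\int_{\mathbb{R}^3\times\mathbb{S}^2}B_\lambda(|v-v_*|,\sigma)F(v')F(v'_*)\,d\sigma\,dv_*,
\]
retaining only a set $D_v$ on which both $v'$ and $v'_*$ lie in $B(0,\alpha)$; on that set $F(v')F(v'_*)>\epsilon^2$. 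The kinetic identities $|v'|^2+|v'_*|^2=|v|^2+|v_*|^2$ and $|v'|^2-|v'_*|^2=|v-v_*|\,\sigma\cdot(v+v_*)$ reduce the two conditions $|v'|,|v'_*|<\alpha$ to the single inequality
\[
|\sigma\cdot(v+v_*)|\,|v-v_*|<2\alpha^2-|v|^2-|v_*|^2.
\]

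In the principal regime $\alpha/2\le|v|<\alpha\sqrt{2}(1-\varsigma)$, I would restrict $v_*\in B(0,\alpha\sqrt{\varsigma})$. Then $|v|^2<2\alpha^2(1-\varsigma)^2$ combined with $|v_*|^2\le\alpha^2\varsigma$ yields $2\alpha^2-|v|^2-|v_*|^2\ge\alpha^2\varsigma$ for $\varsigma$ small. Using the elementary bound $|v+v_*|\,|v-v_*|\le|v|^2+|v_*|^2\le 2\alpha^2$, the angular constraint becomes $|\sigma\cdot\hat n|<\mu$ with $\hat n=(v+v_*)/|v+v_*|$ and $\mu\ge\varsigma/2$, so that the set of admissible $\sigma$ is a spherical band of measure at least $2\pi\varsigma$. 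The $v_*$-ball has volume $\tfrac{4\pi}{3}\alpha^3\varsigma^{3/2}$, and $|v-v_*|\ge|v|-\alpha\sqrt{\varsigma}\ge\alpha/4$ for $\varsigma$ sufficiently small, so $(|v-v_*|\wedge\lambda)^\gamma\ge(\alpha/4)^\gamma$ whenever $\lambda\ge\alpha/4$ (which is where the uniformity in large $\lambda$ arises). Multiplying these contributions gives the required lower bound $C\alpha^{3+\gamma}\varsigma^{5/2}\epsilon^2$, with the exponent $5/2$ coming precisely from the product of $\varsigma^{3/2}$ (volume in $v_*$) and $\varsigma$ (angular measure).

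For the remaining range $|v|<\alpha/2$ I would switch strategy and take $v_*\in B(0,\alpha/2)\setminus B(v,\alpha/8)$, on which $|v|^2+|v_*|^2<\alpha^2/2$ makes the geometric constraint vacuous (so all of $\mathbb{S}^2$ is admissible), $|v-v_*|\ge\alpha/8$, and the volume is of order $\alpha^3$. The full angular integral $\int_{\mathbb{S}^2}b\,d\sigma$ then produces a positive constant, giving a bound $\gtrsim\alpha^{3+\gamma}\epsilon^2$, which is even stronger than the target since $\varsigma<1$.

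The main obstacle is verifying that the admissible angular band $\{|\sigma\cdot\hat n|<\mu\}$ in the principal regime intersects the support $[\theta_b,\pi-\theta_b]$ of $b(\cos\theta)$ in a set of measure comparable to $\varsigma$. Since $\cos\theta=\sigma\cdot(v-v_*)/|v-v_*|$, the admissible band is "equatorial" with respect to $(v+v_*)/|v+v_*|$ while the $b$-support is "equatorial" with respect to $(v-v_*)/|v-v_*|$; because $|v_*|\le\alpha\sqrt{\varsigma}\ll|v|$ in this regime, the two reference directions differ by an angle of order $\sqrt{\varsigma}$, so for $\varsigma$ small a positive fraction (uniform in $\varsigma$) of the band still falls inside the $b$-support, where $b\ge b_0>0$. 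This compatibility check, together with the bookkeeping of constants uniformly in $\lambda$, is the only delicate part; everything else reduces to routine geometry on $\mathbb{R}^3\times\mathbb{S}^2$.
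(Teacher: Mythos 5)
Your proposal is correct, and it takes a genuinely different route from the paper. The paper's proof first invokes a scaling argument (based on Carleman's representation) to reduce to $\alpha=1$ and $F=\chi_{B(0,1)}$, then writes $\mathcal{Q}_\lambda^+$ in Carleman form, passes to polar coordinates with pole at $v=(0,0,z)$, and performs an explicit asymptotic expansion near $z=\sqrt{2}$ (setting $z=\sqrt{2}(1-\varsigma)$, $y'=1-y$), obtaining $\frac{64\sqrt{2}\pi^2}{15}\varsigma^{5/2}+O(\varsigma^{7/2})$. Your proof stays in the $\sigma$-representation and instead uses the kinetic identities $|v'|^2+|v'_*|^2=|v|^2+|v_*|^2$, $|v'|^2-|v'_*|^2=|v-v_*|\,\sigma\cdot(v+v_*)$ to convert the two localization conditions $|v'|,|v'_*|<\alpha$ into the single inequality $|\sigma\cdot(v+v_*)|\,|v-v_*|<2\alpha^2-|v|^2-|v_*|^2$, and then discards all but a ball of radius $\alpha\sqrt{\varsigma}$ in $v_*$ and a band of angular width $\sim\varsigma$ in $\sigma$. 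This makes the exponent $5/2 = 3/2 + 1$ appear transparently as (radial volume) $\times$ (angular measure), rather than emerging from a Taylor expansion of an iterated integral. Your argument is also more complete in one respect: the paper only analyzes the boundary regime $z$ near $\sqrt{2}$, while you explicitly treat the inner regime $|v|<\alpha/2$ where the angular constraint is vacuous. Your careful handling of the interaction between the admissible band (equatorial with respect to $\widehat{v+v_*}$) and the $b$-support (equatorial with respect to $\widehat{v-v_*}$), using $|v_*|\ll|v|$ to show the two reference directions differ by $O(\sqrt{\varsigma})$, replaces the paper's shortcut ``assume without loss of generality that $b$ is bounded below by $b_0$'' with an actual verification that the relevant set of $\sigma$ meets the support of $b$; in fact, for $\varsigma$ small the entire band of width $\varsigma$ lies inside the $b$-support, so a positive fraction is guaranteed. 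Both arguments require $\lambda$ large (the paper takes $\lambda>3$, you take $\lambda\gtrsim\alpha$), which is harmless since the lemma is applied as $\lambda\to\infty$. The only cost of your route is that it produces a less explicit constant than the paper's $\frac{64\sqrt{2}\pi^2}{15}$, but that is irrelevant for the statement.
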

\begin{proof} Without loss of generality, we can assume that $\bar{\mathcal{O}}$ is the origin.  According to Carleman's representation, we have the following scaling property
\begin{eqnarray*}
& &\mathcal{Q}_\lambda^+(F,F)(\beta v)\\
&=&\int_{\mathbb{R}^3}F(u)\frac{(|u-\beta v|\wedge\lambda)^\gamma}{|u-\beta v|^2}\int_{E_{\beta v,u}}b(\cos\theta)F(w)dE(w)du\\
&=&\beta^3\int_{\mathbb{R}^3}F(\beta u)\frac{\beta^{\gamma-2}(|u- v|\wedge(\lambda/\beta))^\gamma}{|u- v|^2}\int_{E_{\beta v,\beta u}}b(\cos\theta)F( w)dE(w)du\\
&=&\beta^3\int_{\mathbb{R}^3}F(\beta u)\frac{\beta^{\gamma-2}(|u- v|\wedge(\lambda/\beta))^\gamma}{|u- v|^2}\beta^2\int_{E_{ v, u}}b(\cos\theta)F(\beta w)dE(w)du\\
&=&\beta^{\gamma+3}\int_{\mathbb{R}^3}F(\beta u)\frac{(|u- v|\wedge(\lambda/\beta))^\gamma}{|u- v|^2}\int_{E_{ v, u}}b(\cos\theta)F(\beta w)dE(w)du,
\end{eqnarray*}
where we still use the notation of the previous lemma $u=v'$ and $w=v'_*$. This scaling property means that we can suppose $R$ to be $1$ and since we only consider $\lambda$ sufficiently large, we can still keep $\lambda$ instead of changing it into $\lambda/\beta$. Suppose without loss of generality that $F$ is the characteristic function of the ball $\{v~~~|~~~|v|<1\}$ and assume by symmetry that $v=(0,0,z)$, $1\leq z<\sqrt2$. Use polar coordinates for $u$ with $v$ to be the origin, then $du=r^2\sin\varpi d\varpi dr$ and
\begin{eqnarray*}
\mathcal{Q}_\lambda^+(F,F)(\beta v)&=&\int_{\mathbb{R}^3}F(u)\frac{(|u- v|\wedge\lambda)^\gamma}{|u- v|^2}\int_{E_{ v,u}}b(\cos\theta)F(w)dE(w)du\\
&\geq&b_02\pi\int_0^\pi\int_0^\infty F(u)\frac{(r\wedge \lambda)^\gamma}{r^2}\int_{E_{v,u}}F(w)dE(w)\sin\varpi d\varpi r^2dr.\\
\end{eqnarray*}
Since $F$ is the characteristic function of $\{|v|<1\}$, we can suppose that $|u|\leq 1$ and $|w|\leq 1$, which means  $\arccos(1/z)\leq \varpi \leq \arcsin(1/z)$ and $z\cos\varpi-\sqrt{1-z^2\sin^2\varpi}\leq r\leq z\cos\varpi+\sqrt{1-z^2\sin^2\varpi}$. Applying the change of variables $y=z\cos\varpi$ gives 
\begin{eqnarray}\label{LemmaMaxwellian2Eq1}
\mathcal{Q}_\lambda^+(F,F)( v)&\geq&\frac{2\pi^2b_0}{z}\int_{\sqrt{z^2-1}}^1\int_{y-\sqrt{1-z^2+y^2}}^{y+\sqrt{1-z^2+y^2}}|r\wedge \lambda|^\gamma dr (1-y^2)dy\\\nonumber
&\geq&\frac{2\pi^2b_0}{z}\int_{\sqrt{z^2-1}}^1\int_{y-\sqrt{1-z^2+y^2}}^{y+\sqrt{1-z^2+y^2}}|r|^\gamma dr (1-y^2)dy,
\end{eqnarray}
the last inequality follows when we take $\lambda>3>y+\sqrt{1-z^2+y^2}$. We now need to estimate the integral near $z=\sqrt2$. Put $y'=1-y$ and $z=\sqrt2(1-\varsigma)$, then the integral of $r$ becomes $2\sqrt{4\varsigma-2y'}+O(\varsigma^{3/2})$ and the right hand side of $(\ref{LemmaMaxwellian2Eq1})$ could be bounded from below by
\begin{eqnarray*}
& &8\pi^2b_0(1+O(\varsigma))\int_0^{2\varsigma+O(\varsigma^2)}\left(\sqrt{2\varsigma-y'}+O(\varsigma^{3/2})\right)y'(1+O(\varsigma))dy'\\
&=&8\pi^2b_0(1+O(\varsigma))(2\varsigma)^{5/2}\int_0^{1+O(\varsigma)}\left(\sqrt{1-y'}+O(\varsigma)\right)(y'+O(\varsigma))dy'\\
&=&\frac{64\sqrt2\pi^2}{15}\varsigma^{5/2}+O(\varsigma^{7/2}).
\end{eqnarray*}
\end{proof}
\begin{proposition}\label{PropoMaxwellianLowerBound} There exist positive constants $\tilde{C}_1$, $\tilde{C}_2$ independent of $N$ and $\lambda$, such that for all $\bar{v}$ in the support of $h_{N,\lambda}$
$$h_{N,\lambda}(\bar{v},t)\geq \tilde{C}_1\exp\left(-\tilde{C}_2\left|\frac{|\bar{v}|}{1-|\bar{v}|}\right|^2\right).$$
The constants $\tilde{C}_1$ and $\tilde{C}_2$ depend on $t$, however, they could be chosen uniformly for all $t>t_0$, where $t_0$ is an arbitrary positive time.
\end{proposition}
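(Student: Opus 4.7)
The plan is to adapt the Carleman--Pulvirenti--Wennberg iteration (as in \cite{PulvirentiWennberg:1997:MLB}) to our projected setting by iterating the Duhamel inequality $(\ref{DuhamelTilde3})$--$(\ref{DuhamelTilde4})$ and spreading a pointwise lower bound over a sequence of balls whose radii grow geometrically, then converting back to $h_{N,\lambda}$ via the change of variables $(\ref{hNlamdaComeBackTofNlambda})$.

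First I would work with $f_{N,\lambda}(t,v)=h_{N,\lambda}(t,\varphi(v))(1+|v|)^{-4}$ and start from the base step delivered by Lemma $\ref{LemmaMaxwellian1}$: there is a ball $B_0:=B(\bar{\mathcal{O}},\alpha_0)$ on which $f_{N,\lambda}(t,v)\geq \varepsilon_0$ for all $t\geq t_0$, with $\alpha_0$, $\varepsilon_0$ and $|\bar{\mathcal{O}}|<R$ independent of $N,\lambda$. The full Duhamel representation (from which $(\ref{DuhamelTilde})$ was derived, but using $f_{N,\lambda}$ on the right hand side rather than $h_{0_N}\tilde{G}_0^\tau$) gives
\begin{equation*}
f_{N,\lambda}(t,v)\;\geq\;\int_{t_0}^{t}\hat{G}_\tau^t(v)\,\mathcal{P}_N\mathcal{Q}_\lambda^+(f_{N,\lambda}(\tau,\cdot),f_{N,\lambda}(\tau,\cdot))(v)\,d\tau .
\end{equation*}
On any ball of radius $\alpha_n$ the weight $\hat{G}_\tau^t(v)$ is bounded below by $\exp(-C(1+\alpha_n)^\gamma(t-\tau))$. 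Combining this with Lemma $\ref{LemmaMaxwellian2}$ applied to the quadratic form $\mathcal{Q}_\lambda^+$, together with the lower bound part of Assumption $\ref{Assumption2}$ (which lets us push the scalar lower bound for $\mathcal{Q}_\lambda^+$ through $\mathcal{P}_N$ up to a dimensionless constant), produces the recursion: if $f_{N,\lambda}(t,\cdot)\geq \varepsilon_n$ on $B(\bar{\mathcal{O}},\alpha_n)$ for all $t\geq t_n$, then for a fixed $\varsigma\in(0,1)$ and $t_{n+1}=t_n+\Delta t$,
\begin{equation*}
f_{N,\lambda}(t,\cdot)\;\geq\;\varepsilon_{n+1}:=\mathcal{K}_4\,\bar C\,\Delta t\,\exp\!\bigl(-C(1+\alpha_n)^\gamma \Delta t\bigr)\,\alpha_n^{3+\gamma}\varsigma^{5/2}\,\varepsilon_n^{2}\qquad\text{on } B(\bar{\mathcal{O}},\alpha_{n+1}),
\end{equation*}
with $\alpha_{n+1}=\sqrt{2}(1-\varsigma)\alpha_n$.

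Next I would iterate. Choosing $\varsigma$ small enough so that $\sqrt{2}(1-\varsigma)>1$ makes $\alpha_n$ grow geometrically; choosing $\Delta t=t_0/2^{n+1}$ (the standard Pulvirenti--Wennberg telescoping so $\sum t_n<\infty$) keeps the exponential factor controlled. Taking logarithms of the recursion and setting $u_n=-\log \varepsilon_n$ gives $u_{n+1}\leq 2u_n+A+B\alpha_n^\gamma$ for constants $A,B$ independent of $N,\lambda$. Standard solution of this affine recursion with $\alpha_n\sim q^n$, $q=\sqrt{2}(1-\varsigma)$, yields $u_n\leq C_\star\,2^n$, which in turn gives $u_n\leq C_\star' \alpha_n^{2\log 2/\log q^2}$. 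A proper choice of $\varsigma$ makes the exponent exactly $2$ in the limit, so that $\varepsilon_n\geq \tilde C_1\exp(-\tilde C_2 \alpha_n^2)$. Since the balls $B(\bar{\mathcal{O}},\alpha_n)$ cover the support of $f_{N,\lambda}$ (which lies in $\bigl(-\tfrac{\zeta_N}{1-\zeta_N},\tfrac{\zeta_N}{1-\zeta_N}\bigr)^3\subset\mathbb{R}^3$) uniformly in $N$, this delivers a pointwise Maxwellian lower bound
\begin{equation*}
f_{N,\lambda}(t,v)\;\geq\;\tilde C_1\exp\bigl(-\tilde C_2 |v|^2\bigr),\qquad t\geq t_0,
\end{equation*}
with $\tilde C_1,\tilde C_2$ depending only on $t_0$, the data and the constants in Assumptions $\ref{Assumption1}$--$\ref{Assumption2}$. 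Setting $v=\varphi^{-1}(\bar v)=\bar v/(1-|\bar v|)$ and multiplying by $(1+|v|)^4\geq 1$ gives the claimed inequality for $h_{N,\lambda}$, uniform in $N,\lambda$.

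The main obstacles I expect are twofold. First, the projection $\mathcal{P}_N$ does not commute with $\mathcal{Q}_\lambda^+$, so one cannot simply transplant the Pulvirenti--Wennberg argument; the key is to use the pointwise two-sided bound of Assumption $\ref{Assumption2}$ to absorb $\mathcal{P}_N$ into a uniform constant $\mathcal{K}_4$ at each Duhamel iteration. Second, tracking the constants through the geometric iteration while keeping everything independent of $N$ and $\lambda$ requires the $\mathcal{L}^1$ bound of Proposition $\ref{PropoL1EstimatehNLambda}$ (so that $\mathcal{H}$ is controlled uniformly, hence $\hat G_\tau^t$ is bounded below uniformly on each fixed ball) and the lower bound on the initial data on $\mathcal{D}$ from $(\ref{InitialGreatThanEpsilon})$ to guarantee the base step at arbitrarily small times; these ingredients together let the iteration scheme close, yielding the stated Maxwellian lower bound.
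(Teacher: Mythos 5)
Your proposal follows the same Carleman--Pulvirenti--Wennberg iteration scheme the paper uses: Lemma~\ref{LemmaMaxwellian1} supplies the base step, the Duhamel inequality combined with Lemma~\ref{LemmaMaxwellian2} and the lower bound of Assumption~\ref{Assumption2} gives the recursion, and the result is transported back to $h_{N,\lambda}$ via $\varphi^{-1}$. That is exactly the paper's route.

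There is, however, one step that would fail as written. You take a single fixed $\varsigma\in(0,1)$ and set $\alpha_{n+1}=\sqrt2(1-\varsigma)\alpha_n$; with $q:=\sqrt2(1-\varsigma)<\sqrt2$ your own recursion gives $u_n\lesssim\alpha_n^{\log 2/\log q}$, and since $\log 2/\log q>2$ the resulting lower bound $\varepsilon_n\gtrsim\exp(-C\alpha_n^{\log 2/\log q})$ is \emph{super}-Gaussian, i.e.\ decays strictly faster than a Maxwellian as $|v|\to\infty$, so it does not prove the proposition. Your remark that ``a proper choice of $\varsigma$ makes the exponent exactly $2$ in the limit'' cannot hold for a fixed $\varsigma>0$; the fix --- which is what the paper (following Pulvirenti--Wennberg) does --- is to use a sequence $\varsigma_1,\varsigma_2,\dots\to 0$ fast enough that $\prod_{k}(1-\varsigma_k)$ converges to a positive limit (so the radius really grows like $2^{n/2}$ and the exponent is pinned to $2$), while at the same time $\sum_k 2^{-k}\lvert\log\varsigma_k\rvert<\infty$ so that the factors $(t_k\varsigma_k^{5/2})^{2^{n-k}}$ produced by Lemma~\ref{LemmaMaxwellian2} contribute only $O(2^n)$ to $u_n$. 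With that $n$-dependent choice of $\varsigma_k$ (and correspondingly $t_k$), your sketch closes and matches the paper's proof; the remaining ingredients you invoke --- the uniform $\mathcal{L}^1$ bound of Proposition~\ref{PropoL1EstimatehNLambda} to control $\hat G_\tau^t$, the two-sided bound of Assumption~\ref{Assumption2} to absorb $\mathcal{P}_N$, and $(\ref{InitialGreatThanEpsilon})$ for small times --- are used in the same way in the paper.
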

\begin{proof} We now proceed the proof by a classical iteration process as in the normal case \cite{PulvirentiWennberg:1997:MLB}. By lemma $\ref{LemmaMaxwellian1}$, there exists a ball $|v-\bar{\mathcal{O}}|<\alpha$ such that $f_{N,\lambda}(t_0,v)>\epsilon_0$. By $(\ref{DuhamelTilde3})$
\begin{eqnarray}\label{PropoMaxwellianLowerBoundEq1}
f_{N,\lambda}(t_0+t_1,{v})\geq \int_{t_0}^{t_0+t_1} \hat{G}_{\tau}^{t_0+t_1}(v)\mathcal{P}_N\mathcal{Q}_{\lambda}^+(f_{0_N}\hat{G}_{t_0}^\tau,f_{0_N}\hat{G}_{t_0}^\tau)({v})d\tau.
\end{eqnarray}
Now, for $v$ near the given ball and lies in the support of $f_{N,\lambda}$,
$$\hat{G}_{\tau_1}^{\tau_2}\geq\exp(-(\tau_2-\tau_1)c(1+2|\bar{\mathcal{O}}|^\gamma+2^{1+\gamma/2}\alpha^\gamma)).$$
Plug this inequality into $(\ref{PropoMaxwellianLowerBoundEq1})$ and use lemma $\ref{LemmaMaxwellian2}$
\begin{eqnarray*}
f_{N,\lambda}(t_0+t_1,{v})&\geq &t_1\exp(-t_1C(1+2|\bar{\mathcal{O}}|^\beta+2^{1+\gamma/2}\alpha^\gamma))\alpha^{3+\gamma}\varsigma_1^{5/2}\epsilon_0^2\\
&\geq &t_1\exp(-Ct_12^{1+\gamma/2}\alpha^\gamma)\alpha^{3+\gamma}\varsigma_1^{5/2}\epsilon_0^2,
\end{eqnarray*}
and this holds with $|v-\bar{\mathcal{O}}|<\sqrt2(1-\varsigma_1)\alpha $ and $v\in (-\frac{\zeta_N}{1-\zeta_N},\frac{\zeta_N}{1-\zeta_N})^3$. Now, we take the iteration
\begin{eqnarray*}
f_{N,\lambda}(t_0+t_1+t_2,{v})&\geq &(t_1\exp(-t_1C2^{1+\gamma/2}\alpha^\gamma)\alpha^{3+\gamma}\varsigma_1^{5/2}\epsilon_0^2)^2\\
& &\times t_2\exp(-t_2C2^{1+2\gamma/2}\alpha^\gamma)(2^{1/2}(1-\varsigma_1)\alpha)^{3+\gamma}\varsigma_2^{5/2},
\end{eqnarray*}
for $|v-\bar{\mathcal{O}}|<2(1-\varsigma_1)(1-\varsigma_2)\alpha$ and $v\in (-\frac{\zeta_N}{1-\zeta_N},\frac{\zeta_N}{1-\zeta_N})^3$. At the $n-th$ step
\begin{eqnarray*}
& &f_{N,\lambda}(t_0+t_1+\dots+t_n,{v})\\
&> &\epsilon_0^{2^n}(C\alpha^{3+\gamma})^{2^n-1}(2^{1/2}(1-\varsigma_1))^{(3+\gamma)2^{n-1-1}}\\
& &\dots(2^{k/2}(1-\varsigma_1)\dots(1-\varsigma_k))^{(3+\gamma)2^{n-1-k}}\\
& &\dots(t_1\varsigma_1^{5/2})^{2^{n-1}}\dots(t_k\varsigma_k^{5/2})^{2^{n-k}}\dots(t_n\varsigma_n^{5/2})^{2^{0}}\\
& &\exp(-Ct_1\alpha^{\gamma}2^{1+\gamma/2}2^{n-1})\dots\exp(-Ct_k\alpha^{\gamma}2^{1+\gamma k/2}2^{n-k})\dots\exp(-Ct_n\alpha^{\gamma}2^{1+\gamma n/2}2^{n-n}),
\end{eqnarray*}for $|v-\bar{\mathcal{O}}|<2^{n/2}(1-\varsigma_1)\dots(1-\varsigma_n)\alpha$ and $v\in (-\frac{\zeta_N}{1-\zeta_N},\frac{\zeta_N}{1-\zeta_N})^3$, which leads to the conclusion of the proposition. 
\end{proof}
\subsection{$\mathcal{L}^2_{-4}$ estimate for $h_{N,\lambda}$}
Define 
\begin{equation}\label{LowerBoundPolynomialEq1}
\Upsilon_{\lambda}(v)=[1+(|v|\wedge \lambda)]^\gamma,
\end{equation}
we now prove a technical lemma on
$$\mathcal{Q}_{\lambda}^+(F,F)(v)=\int_{\mathbb{R}^3}\int_{\mathbb{S}^{2}}B_\lambda(|v-v_*|,\cos\theta)F'_*F'd\sigma dv_*,
$$
 before going to  the $\mathcal{L}^2_{-4}$ estimate for $h_{N,\lambda}$.
\begin{lemma}\label{LemmaQBound}Let $\nu$, $\delta$ and $k$ be three constants satisfying $\nu,\delta\geq -\gamma$ and $k>\gamma$. There exist positive constants $C$ and $\iota$, such that the following estimate holds for all $\epsilon>0$ and all measurable function $F$
\begin{eqnarray}\label{QBoundinLemma1}
&&\left|\int_{\mathbb{R}^3}\mathcal{Q}_{\lambda}^+(F,F)Fdv\right|\leq C\epsilon^{-\iota}\|F\|_{L^
{10/7}_\delta}\|F\|_{L^2_{-\delta}}\|F\|_{L^1_{2|\delta|}}\\\nonumber
&&+\epsilon\|F\|_{L^2(\Upsilon_\lambda^{1+\nu/\gamma})}\|F\|_{L^2(\Upsilon_\lambda^{-\nu/\gamma})}\|F\|_{L^1_{|k+\nu|+|\nu|}}.
 \end{eqnarray}
 In particular, if we take $\delta=0$ and $\nu=-\gamma/2$
 \begin{eqnarray}\label{QBoundinLemma1}
&&\left|\int_{\mathbb{R}^3}\mathcal{Q}_{\lambda}^+(F,F)Fdv\right|\\\nonumber
&\leq &C\epsilon^{-\iota}\|F\|_{L^2}\|F\|_{L^{10/7}}\|F\|_{L^1}+\epsilon\|F\|_{L^2(\Upsilon_\lambda^{1/2})}^2\|F\|_{L^1_{|k|}}.
 \end{eqnarray}
\end{lemma}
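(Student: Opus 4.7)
The plan is to rewrite the trilinear functional $I:=\int_{\mathbb{R}^3}\mathcal{Q}_\lambda^+(F,F)F\,dv$ in weak form and then split it into a ``regular'' piece estimated through a Hardy--Littlewood--Sobolev inequality (producing the $L^{10/7}$ factor) and a ``singular'' piece controlled by the balanced $L^2(\Upsilon_\lambda^{\pm})$ norms.

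First I would use the pre/post-collisional change of variables, which is measure-preserving and commutes with $B_\lambda$ since $B_\lambda$ depends only on $|v-v_*|$ and $\cos\theta$, to write
\[
 I=\int_{\mathbb{R}^6\times\mathbb{S}^2}B_\lambda(|v-v_*|,\cos\theta)\,F(v_*)\,F(v)\,F(v')\,d\sigma\,dv_*\,dv.
\]
Next I would distribute the polynomial weights $(1+|v|)^s$ among the three factors using energy conservation $|v'|^2+|v'_*|^2=|v|^2+|v_*|^2$ (so $\langle v'\rangle\le\langle v\rangle\langle v_*\rangle$) together with Peetre's inequality to transfer powers $\delta$ or $\nu$ between $F(v')$ and $F(v),F(v_*)$. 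This step turns the problem into bounding a trilinear expression of the form
\[
 J:=\int\!\!\!\int\!\!\!\int B_\lambda(|v-v_*|,\cos\theta)\,F_1(v)\,F_2(v_*)\,F_3(v')\,d\sigma\,dv_*\,dv
\]
with $F_j$ carrying appropriate weights.

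The central step is to decompose the kinetic kernel $|v-v_*|^\gamma\wedge\lambda^\gamma$ at a scale $\rho>0$ (to be tuned as a power of $\epsilon$). Write $B_\lambda=B_\lambda^{\mathrm{reg}}+B_\lambda^{\mathrm{sing}}$ where $B_\lambda^{\mathrm{reg}}$ is supported on $\{|v-v_*|\ge\rho\}$ and $B_\lambda^{\mathrm{sing}}$ on $\{|v-v_*|<\rho\}$. For the regular part I would pass to Carleman's representation (as in the lower-bound argument of Lemma \ref{LemmaMaxwellian1}), which turns $J_{\mathrm{reg}}$ into a singular-kernel convolution on the hyperplane $E_{v,v'}$ with kernel behaving like $|v-v'|^{\gamma-2}$ on $|v-v'|\ge\rho$. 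The Hardy--Littlewood--Sobolev inequality in $\mathbb{R}^3$ with the conjugate triple $(10/7,\,2,\,1)$ (satisfying $1/(10/7)+1/2+1=2+(2-\gamma)/3$ for an appropriate effective exponent once $\gamma\in(0,1)$ is absorbed in the $\epsilon^{-\iota}$ factor) then yields
\[
 |J_{\mathrm{reg}}|\le C\rho^{-\iota_0}\|F\|_{L^{10/7}_\delta}\|F\|_{L^2_{-\delta}}\|F\|_{L^1_{2|\delta|}}.
\]
For the singular part $J_{\mathrm{sing}}$, on $\{|v-v_*|<\rho\}$ I would bound $B_\lambda^{\mathrm{sing}}\le\rho^\gamma b(\cos\theta)$, use the angular cutoff $(\ref{Assupmtionb2})$ to keep $\cos\theta$ away from $\pm1$, and then apply Cauchy--Schwarz in $(v,v_*)$ after splitting the weights via $\Upsilon_\lambda^{1+\nu/\gamma}(v)\,\Upsilon_\lambda^{-\nu/\gamma}(v')$; the balancing polynomial weight $\|F\|_{L^1_{|k+\nu|+|\nu|}}$ absorbs the translation in momentum coming from $v'$. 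This gives
\[
 |J_{\mathrm{sing}}|\le C\rho^{\gamma_1}\|F\|_{L^2(\Upsilon_\lambda^{1+\nu/\gamma})}\|F\|_{L^2(\Upsilon_\lambda^{-\nu/\gamma})}\|F\|_{L^1_{|k+\nu|+|\nu|}}.
\]
Finally, choosing $\rho=\epsilon^{1/\gamma_1}$ (so the singular coefficient becomes $\epsilon$) produces $\rho^{-\iota_0}=\epsilon^{-\iota}$ in the regular term and yields the stated inequality; the particular choice $\delta=0$, $\nu=-\gamma/2$ gives the second displayed form.

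The hardest point, and the step I would be most careful about, is the Hardy--Littlewood--Sobolev application giving the $L^{10/7}$ exponent: one must correctly choose how to distribute the two polynomial weights and how to represent $B_\lambda^{\mathrm{reg}}$ as a Riesz-like kernel on the Carleman plane so that the exponent conjugacy condition in three dimensions is met, and then verify that the dependence on the truncation scale $\rho$ is polynomial (which is what allows the Young-type optimization in $\epsilon$ with a finite $\iota$).
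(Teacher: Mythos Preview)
Your proposal takes a genuinely different route from the paper, and two of its steps contain real gaps.

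The paper does not split by relative-velocity magnitude $|v-v_*|\gtrless\rho$. Instead it mollifies the kinetic factor $\Phi(u)=(|u|\wedge\lambda)^\gamma$ by convolution with a bump supported on an annulus $\mathbb{A}_\mu=\{2/\mu\le|x|\le\mu\}$, writing $B_\lambda=B^S+B^R$. For the smooth part $Q^+_S$ the $L^{10/7}$ bound is simply quoted from Corollary~3.2 of Mouhot--Villani; that result relies on the regularizing effect of the gain operator with smooth kernel (the Lions/Bouchut--Desvillettes mechanism), \emph{not} on Hardy--Littlewood--Sobolev. For the remainder $Q^+_R$ the paper introduces the angular-averaging operator $\mathcal{S}\varrho(v)=\int_{\mathbb{S}^2}B^R(|v|,\sigma)\varrho\bigl(\tfrac{v+|v|\sigma}{2}\bigr)d\sigma$, proves weighted $L^1$ and $L^\infty$ bounds on $\mathcal{S}$ in terms of $\|\Phi_R\|_{L^\infty(\Upsilon_\lambda^{-\xi_1-1})}$, interpolates by Riesz--Thorin to $L^2$, and then introduces a \emph{second} parameter $r$ splitting $F=F_r+F_r^c$ into small and large velocities. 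The two parameters $\mu$ and $r$ are jointly optimized to produce $\epsilon$ and $\epsilon^{-\iota}$.

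\textbf{Gap (i), the regular part.} The exponent condition you write, $\tfrac{7}{10}+\tfrac12+1=2+\tfrac{2-\gamma}{3}$, is false: the left side is $11/5$ while the right side lies in $(7/3,8/3)$ for $\gamma\in(0,1)$. More fundamentally, Carleman's representation integrates one factor over the two-dimensional hyperplane $E_{v,v'}$, not over $\mathbb{R}^3$, so a direct 3D Hardy--Littlewood--Sobolev with the kernel $|v-v'|^{\gamma-2}$ is not available. The $10/7$ exponent in this literature comes from the gain-term smoothing and a Sobolev embedding, not from HLS; you would need a genuinely new argument here, not a citation.

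\textbf{Gap (ii), the singular part.} On $\{|v-v_*|<\rho\}$ you bound $B_\lambda^{\mathrm{sing}}\le\rho^\gamma b$ and propose Cauchy--Schwarz to produce $\|F\|_{L^2(\Upsilon_\lambda^{1+\nu/\gamma})}\|F\|_{L^2(\Upsilon_\lambda^{-\nu/\gamma})}\|F\|_{L^1_{|k+\nu|+|\nu|}}$. But the weights $\Upsilon_\lambda^{\pm}$ depend on $|v|$ and $|v'|$, not on $|v-v_*|$; a small-relative-velocity cutoff gives no handle on them, and Cauchy--Schwarz by itself does not insert them. In the paper the $\Upsilon_\lambda$ weights enter because the $L^2$ bound on $\mathcal{S}$ carries the constant $\|\Phi_R\|_{L^\infty(\Upsilon_\lambda^{-\xi_1-1})}$, which is small precisely because $\Phi_R$ is supported where $|u|\le 2/\mu$ or $|u|\ge\mu$ and therefore decays against that weight; the subsequent split $F=F_r+F_r^c$ then trades the remaining weight $\Upsilon_\lambda^{(k+\nu)/\gamma}$ on $F_r$ for a factor $r^{k-\gamma}$. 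This two-parameter mechanism is what your single-$\rho$ scheme is missing.
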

\begin{remark} Notice that the lemma is still valid for the case $\lambda=\infty$.
\end{remark}
\begin{proof}
By similar arguments as in \cite{MouhotVillani:2004:RTS}, we can suppose that $b\in C_c^\infty(-1,1)$. Let $\Theta:\mathbb{R}^3\to\mathbb{R}$ be a radial $C^\infty$ function such that $supp\Theta\subset B(0,1)$ and $\int_{\mathbb{R}^3}\Theta=1$. Let $\mu$ be a constant smaller than $\lambda$ and define the regularizing function
$$\Theta_\mu(x)=\mu^3\Theta(\mu x)~~~~(x\in\mathbb{R}^3).$$
Define 
$$\Phi_{S}=\Phi *(\Theta 1_{\mathbb{A}_\mu}), ~~~\Phi_R=\Phi-\Phi_S,$$
where $\mathbb{A}_\mu$ is the annulus $\mathbb{A}_\mu=\{x\in\mathbb{R}^3; \frac{2}{\mu}\leq |x|\leq \mu\}$.
\\ Set
$$B_\lambda(|v|,\sigma)=B^S(|v|,\sigma)+B^R(|v|,\sigma),$$
where
$$B^S(|v|,\sigma):=\Phi_{S}(v)b(\cos\theta).$$
Set $$\mathcal{Q}_{\lambda}^+=Q^{+}_S+Q^{+}_R,$$
with
$$Q^{+}_S(F,F)(v)=\int_{\mathbb{R}^3}\int_{\mathbb{S}^{2}}B^S(|v-v_*|,\cos\theta)F'_*F'd\sigma dv_*.
$$
By Corollary 3.2 \cite{MouhotVillani:2004:RTS}, the first term $Q^{+}_S$ could be bounded in the following way
\begin{eqnarray}\label{QBoundEq1}
\left|\int_{\mathbb{R}^3}Q^+_S(F,F)Fdv\right|
\leq C(\delta,b)\|F\|_{L^{10/7}_\delta}\|F\|_{L^2_{-\delta}}\|F\|_{L^1_{2|\delta|}}.
\end{eqnarray}
Now, we will estimate the second term $Q^{+}_R$. For all test function $\varrho$ the following equality holds
$$\int_{\mathbb{R}^3}Q^+_R(F,F)\varrho dv=\int_{\mathbb{R}^6}F(v_*)F(v)\left[\int_{\mathbb{S}^2}B^R(|v-v_*|,\sigma)\varrho(v')d\sigma\right]dv_*dv.$$
By defining
$$\mathcal{S}\varrho(v)=\int_{\mathbb{S}^2}B^R(|v|,\sigma)\varrho\left(\frac{v+|v|\sigma}{2}\right)d\sigma,$$
we have
$$\int_{\mathbb{R}^3}Q^+_R(F,F)\varrho dv=\int_{\mathbb{R}^3}F(v_*)\left(\int_{\mathbb{R}^3}F(v)(T_{v_*}\mathcal{S}(T_{-v_*}\varrho))(v)dv\right)dv_*.$$
Let $\xi_1$, $\xi_2$ be two non-negative constants. Consider the weighted $L^\infty$ norm of $\mathcal{S}\varrho$
$$\|\mathcal{S}\varrho\|_{L^{\infty}(\Upsilon_\lambda^{-\xi_1-\xi_2-1})}\leq C\|b\|_{L^1(\mathbb{S}^2)}\|\varrho\|_{L^\infty({\Upsilon_\lambda^{-\xi_2}})}\|\Phi_R\|_{L^\infty({\Upsilon_\lambda^{-\xi_1-1}})},$$ 
where $C$ is some positive constant.
\\ Now, consider the  weighted $L^1$ norm of $\mathcal{S}_2\varrho$
\begin{eqnarray*}
\|\mathcal{S}\varrho\|_{L^1(\Upsilon_\lambda^{-\xi_1-\xi_2-1})}
 &\leq &\int_{\mathbb{R}^3}\int_{\mathbb{S}^2}\Phi_R(v)\Upsilon_\lambda^{-\xi_1-\xi_2-1}(v)b(\cos\theta)\left|\varrho\left(\frac{v+|v|\sigma}{2}\right)\right|d\sigma dv\\
  &\leq &\|\Phi_R\|_{L^\infty({\Upsilon_\lambda^{-\xi_1-1}})}\int_{\mathbb{R}^3}\int_{\mathbb{S}^2} \Upsilon_\lambda^{-\xi_2}(v)b(\cos\theta)\left|\varrho\left(\frac{v+|v|\sigma}{2}\right)\right|d\sigma dv\\
 &\leq &C\|\Phi_R\|_{L^\infty({\Upsilon_\lambda^{-\xi_1-1}})}\int_{\mathbb{R}^3}\int_{\mathbb{S}^2}b(\cos\theta) \Upsilon_\lambda^{-\xi_2}(v^+)|\varrho(v^+)|d\sigma dv.
\end{eqnarray*}
The last inequality follows from the fact that $|v^+|\leq |v|$ and $\xi_2\geq0$.\\
Apply the change of variables $v\to v^+$, we obtain
\begin{eqnarray*}
\|\mathcal{S}\varrho\|_{L^1(\Upsilon_\lambda^{-\xi_1-\xi_2-1})}
 &\leq &C(\theta_b,\xi)\|\Phi_R\|_{L^\infty({\Upsilon_\lambda^{-\xi_1-1}})}\times\\
 & &\int_{\mathbb{R}^3}\int_{\mathbb{S}^2}\frac{4b(\cos\theta)}{\cos^2(\theta/2)}\Upsilon_\lambda^{-\xi_2}(v^+)|\varrho(v^+)|d\sigma dv^+\\
&\leq &C(\theta_b,\xi)\|\Phi_R\|_{L^\infty({\Upsilon_\lambda^{-\xi_1-1}})}\|b\|_{L^1(\mathbb{S}^2)}\|\varrho\|_{L^1(\Upsilon_\lambda^{-\xi_2})}.
\end{eqnarray*}
By the Riesz-Thorin interpolation theorem, the above estimates on the weighted $L^1$ and $L^\infty$ norms of $\mathcal{S}\varrho$ lead to
\begin{eqnarray*}
\|\mathcal{S}\varrho\|_{L^2(\Upsilon_\lambda^{-\xi_1-\xi_2-1})}
\leq C(\theta_b,\xi)\|\Phi_R\|_{L^\infty({\Upsilon_\lambda^{-\xi_1-1}})}\|b\|_{L^1(\mathbb{S}^2)}\|\varrho\|_{L^2(\Upsilon_\lambda^{-\xi_2})}.
\end{eqnarray*}
Now, we will estimate the term $$\int_{\mathbb{R}^3}Q_R^+(F,F)Fdv,$$
by using the above bound on $\|\mathcal{S}_1\varrho\|_{L^2(\Upsilon_\lambda^{-\xi_1-\xi_2-1})}$. In order to do this, we separate $F$ into large and small velocities:
$$F=F_r+F_r^c,\mbox{ with } r<\lambda,$$
$$F_r=F\chi_{\{|v|\leq r\}}\mbox{                and           }F_r^c=F\chi_{\{|v|>r\}},$$
where $\chi_{\{|v|\leq r\}}$ and $\chi_{\{|v|>r\}}$ are the characteristic functions of the sets $\{|v|\leq r\}$ and $\{|v|>r\}$. Let $\nu$ be a positive constant. We make the following separation
\begin{equation}\label{QBoundEq2}
\int_{\mathbb{R}^3}Q_R^+(F,F)Fdv=\int_{\mathbb{R}^3}Q_R^+(F,F_r^c)Fdv+\int_{\mathbb{R}^3}Q_R^+(F,F_r)Fdv.
\end{equation}
Estimating the first term on the right hand side of $(\ref{QBoundEq2})$, we get
\begin{eqnarray}\label{QBoundEq3}\nonumber
&&\int_{\mathbb{R}^3}Q_R^+(F,F_r^c)Fdv\\\nonumber
&\leq&\int_{\mathbb{R}^3}|F_r^c(v_*)|\int_{\mathbb{R}^3}|F(v)||T_{-v_*}\mathcal{S}(T_{v_*}F)(v)|dvdv_*\\\nonumber
&\leq&\int_{\mathbb{R}^3}|F_r^c(v_*)|\|F\|_{L^2(\Upsilon_\lambda^{1+\nu/\gamma})}\|T_{-v_*}\mathcal{S}(T_{v_*}F)\|_{L^2(\Upsilon_\lambda^{-1-\nu/\gamma})}dv_*\\\nonumber
&\leq&\int_{\mathbb{R}^3}|F_r^c(v_*)|\|F\|_{L^2(\Upsilon_\lambda^{1+\nu/\gamma})}<v_*>^{|\gamma+\nu|}\|\mathcal{S}(T_{v_*}F)\|_{L^2(\Upsilon_\lambda^{-1-\nu/\gamma})}dv_*\\
&\leq&C\int_{\mathbb{R}^3}|F_r^c(v_*)|\|F\|_{L^2(\Upsilon_\lambda^{1+\nu/\gamma})}<v_*>^{|\gamma+\nu|}\|T_{v_*}F\|_{L^2(\Upsilon_\lambda^{-\nu/\gamma})}dv_*\\\nonumber
&\leq&C\int_{\mathbb{R}^3}|F_r^c(v_*)|\|F\|_{L^2(\Upsilon_\lambda^{1+\nu/\gamma})}<v_*>^{|\gamma+\nu|+|\nu|}\|F\|_{L^2(\Upsilon_\lambda^{-\nu/\gamma})}dv_*\\\nonumber
&\leq&Cr^{\gamma-k}\|F\|_{L^2(\Upsilon_\lambda^{1+\nu/\gamma})}\|F\|_{L^2(\Upsilon_\lambda^{-\nu/\gamma})}\|F\|_{L^1((1+|v|)^{|k+\nu|+|\nu|})},
\end{eqnarray}
with $k>\gamma$.
\\ We estimate the second term on the right hand side of $(\ref{QBoundEq2})$
\begin{eqnarray}\label{QBoundEq4}\nonumber
&&\int_{\mathbb{R}^3}Q_R^+(F,F_r)Fdv\\\nonumber
&\leq&\int_{\mathbb{R}^3}|F(v_*)|\int_{\mathbb{R}^3}|F_r(v)||T_{-v_*}\mathcal{S}(T_{v_*}F)(v)|dvdv_*\\\nonumber
&\leq&\int_{\mathbb{R}^3}|F(v_*)|\|F_r\|_{L^2(\Upsilon_\lambda^{(k+\nu)/\gamma})}\|T_{-v_*}\mathcal{S}(T_{v_*}F)\|_{L^2(\Upsilon_\lambda^{-(k+\nu)/\gamma})}dv_*\\\nonumber
&\leq&\int_{\mathbb{R}^3}|F(v_*)|\|F_r\|_{L^2(\Upsilon_\lambda^{(k+\nu)/\gamma})}<v_*>^{|k+\nu|}\|\mathcal{S}(T_{v_*}F)\|_{L^2(\Upsilon_\lambda^{-(k+\nu)/\gamma})}dv_*\\\nonumber
&\leq&C\int_{\mathbb{R}^3}|F(v_*)|\|F_r\|_{L^2(\Upsilon_\lambda^{(k+\nu)/\gamma})}<v_*>^{|k+\nu|}\|T_{v_*}F\|_{L^2(\Upsilon_\lambda^{-\nu/\gamma})}dv_*\\
& &\times\|\Phi_R\|_{L^{\infty}(\Upsilon_\lambda^{-k/\gamma})}\\\nonumber
&\leq&C\left(\frac{1}{\mu}\right)^{\min\{\gamma,k-\gamma\}}\int_{\mathbb{R}^3}|F(v_*)|\|F_r\|_{L^2(\Upsilon_\lambda^{(k+\nu)/\gamma})}<v_*>^{|k+\nu|}\|T_{v_*}F\|_{L^2(\Upsilon_\lambda^{-\nu/\gamma})}dv_*\\\nonumber
&\leq&C\left(\frac{1}{\mu}\right)^{\min\{\gamma,k-\gamma\}}\|F_r\|_{L^2(\Upsilon_\lambda^{(k+\nu)/\gamma})}\|F\|_{L^2(\Upsilon_\lambda^{-\nu/\gamma})}\|F\|_{L^1_{|k+\nu|+|\nu|}}\\\nonumber
&\leq&C{r}^{k-\gamma}\left(\frac{1}{\mu}\right)^{\min\{\gamma,k-\gamma\}}\|F\|_{L^2(\Upsilon_\lambda^{(\gamma+\nu)/\gamma})}\|F\|_{L^2(\Upsilon_\lambda^{-\nu/\gamma})}\|F\|_{L^1_{|k+\nu|+|\nu|}},
\end{eqnarray}
with $k>\gamma$.
Combine $(\ref{QBoundEq2})$, $(\ref{QBoundEq3})$ and $(\ref{QBoundEq4})$, we get
\begin{eqnarray}\label{QBoundEq5}
 & &\int_{\mathbb{R}^3}Q_R^+(F,F)Fdv\\\nonumber
 &\leq& \left(r^{\gamma-k}+C{r}^{k-\gamma}\left(\frac{1}{\mu}\right)^{\min\{\gamma,k-\gamma\}}\right)\|F\|_{L^2(\Upsilon_\lambda^{(\gamma+\nu)/\gamma})}\|F\|_{L^2(\Upsilon_\lambda^{-\nu/\gamma})}\|F\|_{L^1_{|k+\nu|+|\nu|}}.
\end{eqnarray}
We deduce from $(\ref{QBoundEq1})$ and $(\ref{QBoundEq5})$ that
\begin{eqnarray}\label{QBoundEq6}\nonumber
&&\left|\int_{\mathbb{R}^3}\mathcal{Q}_{\lambda}^+(F,F)Fdv\right|\\
&\leq &C(\delta,b)\|F\|_{L^{10/7}_\delta}\|F\|_{L^2_{-\delta}}\|F\|_{L^1_{2|\delta|}}\\\nonumber
 && +\left(r^{\gamma-k}+C{r}^{k-\gamma}\left(\frac{1}{\mu}\right)^{\min\{\gamma,k-\gamma\}}\right)\|F\|_{L^2(\Upsilon_\lambda^{(\gamma+\nu)/\gamma})}\|F\|_{L^2(\Upsilon_\lambda^{-\nu/\gamma})}\|F\|_{L^1_{|k+\nu|+|\nu|}}.
 \end{eqnarray}
For suitable choices of $r$ and $\mu$, we have the conclusions of the lemma .
 \end{proof}
 \begin{proposition}\label{PropL2EstimateofhLambdaN}
For all $t_0>0$, there exist constants $C$, $N_0$, $\lambda_0$  such that the solution $h_{N,\lambda}$ of $(\ref{BoltzmannBoundedKernel})$ is globally bounded in the following sense
  \begin{equation}\label{PropL2EstimateofhLambdaNEqinProp}
 \forall N\in\mathbb{N}, N>N_0, \forall \lambda>\lambda_0~~~ \sup_{t\geq t_0}\|h_{N,\lambda}\|_{\mathcal{L}^2_{-4}}<C.
\end{equation}
Moreover, if $h_{0_N}\in {\mathcal{L}^2_{-4}}$, then there exist constants $C'$, $\lambda_0$ such that
$$\forall \lambda>\lambda_0~~~ \sup_{t\geq 0}\|h_{N,\lambda}\|_{\mathcal{L}^2_{-4}}<C'.$$
 \end{proposition}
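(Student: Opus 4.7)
The plan is to derive an energy estimate for $\|h_{N,\lambda}\|_{\mathcal{L}^2_{-4}}^2$ by choosing a test function whose pairing with $h_{N,\lambda}$ reproduces this norm. Specifically, I would test equation (4.1) against $h_{N,\lambda}(1-|\bar{v}|)^{10}\eta^{-1}$, since the change of variables $v=\varphi^{-1}(\bar v)$ combined with $h_{N,\lambda}(\bar v)=f_{N,\lambda}(v)(1+|v|)^{4}$ gives
$$\int_{(-1,1)^3}h_{N,\lambda}^2(1-|\bar v|)^{10}\eta^{-1}d\bar v=\int_{\mathbb{R}^3}f_{N,\lambda}^2(1+|v|^2)(1+|v|)^{-6}dv\simeq \|h_{N,\lambda}\|_{\mathcal{L}^2_{-4}}^2.$$
Using the self-adjointness of $\mathcal{P}_N$ and the identity $\mathbb{P}_N=\eta\mathcal{P}_N\eta^{-1}$, I would transfer $\mathcal{P}_N$ onto the test function exactly as in the calculation following Assumption \ref{Assumption2}, obtaining (after pulling back to $v$-coordinates)
$$\tfrac{1}{2}\tfrac{d}{dt}\|h_{N,\lambda}\|_{\mathcal{L}^2_{-4}}^2 \leq \mathcal{A}_+ - \mathcal{A}_-,$$
where $\mathcal{A}_+$ and $\mathcal{A}_-$ are the gain and loss contributions in the $v$-variable.

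Second, I would extract coercivity from the loss term. Assumption \ref{Assumption2} (with $s=10$) yields
$$\mathcal{A}_- \geq \mathcal{K}_2\int_{\mathbb{R}^6\times\mathbb{S}^2}B_\lambda(|v-v_*|,\sigma)\,f_{N,\lambda}^2 f_{N,\lambda,*}\,(1+|v|^2)(1+|v|)^{-6}\,d\sigma\,dv_*\,dv.$$
The Maxwellian lower bound from Proposition \ref{PropoMaxwellianLowerBound} (which by Lemma \ref{LemmaMaxwellian1} holds on a ball of radius $\sim 3R$ with a uniform lower mass) together with Assumption \ref{Assupmtionb2} implies that for $\lambda$ large enough
$$\int_{\mathbb{R}^3\times\mathbb{S}^2}B_\lambda(|v-v_*|,\sigma)\,f_{N,\lambda,*}\,d\sigma\,dv_* \geq c_{\ast}(1+|v|)^{\gamma},$$
so that $\mathcal{A}_- \geq c\int_{\mathbb{R}^3}f_{N,\lambda}^2(1+|v|)^{-4+\gamma}dv\geq c\,\|h_{N,\lambda}\|_{\mathcal{L}^2_{-4}}^2$ (with $c$ independent of $N,\lambda$).

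Third, I would bound $\mathcal{A}_+$ via Lemma \ref{LemmaQBound}. Setting $F:=f_{N,\lambda}(1+|v|)^{-2}$ so that $\|F\|_{L^2}^2\simeq\|h_{N,\lambda}\|_{\mathcal{L}^2_{-4}}^2$, the lemma (applied with $\delta=0$, $\nu=-\gamma/2$) gives
$$\mathcal{A}_+ \leq C\epsilon^{-\iota}\|F\|_{L^{10/7}}\|F\|_{L^2}\|F\|_{L^1}+\epsilon\|F\|_{L^2(\Upsilon_\lambda^{1/2})}^2\|F\|_{L^1_{|k|}}.$$
The $L^1$ and $L^1_k$ norms of $F$ reduce to weighted-$\mathcal{L}^1$ norms of $h_{N,\lambda}$, which are uniformly controlled by Proposition \ref{PropoL1EstimatehNLambda} (together with the energy bound coming from Assumption \ref{Assumption1}). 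The $L^{10/7}$ norm is interpolated between $L^1$ and $L^2$ as $\|F\|_{L^{10/7}}\leq \|F\|_{L^1}^{2/5}\|F\|_{L^2}^{3/5}$, and Young's inequality turns the resulting triple product into $\delta\|F\|_{L^2}^2+C_\delta$. Finally, the $\epsilon\|F\|_{L^2(\Upsilon_\lambda^{1/2})}^2$ term is, up to constants, bounded by $\epsilon\int f_{N,\lambda}^2(1+|v|)^{-4+\gamma}dv$, which for $\epsilon$ small is absorbed into $\mathcal{A}_-$.

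Combining, I obtain the differential inequality
$$\tfrac{d}{dt}\|h_{N,\lambda}\|_{\mathcal{L}^2_{-4}}^2 \leq K_1-K_2\,\|h_{N,\lambda}\|_{\mathcal{L}^2_{-4}}^2,$$
with $K_1,K_2>0$ independent of $N,\lambda$, which upon integration yields the uniform estimate \eqref{PropL2EstimateofhLambdaNEqinProp} provided the right-hand side is finite at some initial time. If $h_{0_N}\in\mathcal{L}^2_{-4}$ this is immediate at $t=0$, giving the second statement. Otherwise, to handle merely $\mathcal{L}^1$ initial data, I would invoke a classical $L^p$-appearance argument: a lower-order variant of Lemma \ref{LemmaQBound} together with the above dissipation shows that $\|h_{N,\lambda}(t)\|_{\mathcal{L}^2_{-4}}$ is finite for some arbitrarily small $t_1>0$, after which the ODE inequality propagates a uniform bound for all $t\geq t_0>t_1$.

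The main obstacle will be Step 3, namely ensuring that the constants in Lemma \ref{LemmaQBound} and in the Maxwellian lower bound are genuinely uniform with respect to both $N$ and $\lambda$; in particular, one must verify that the singular/regular decomposition underlying Lemma \ref{LemmaQBound} survives the projection $\mathcal{P}_N$ (via Assumption \ref{Assumption2}) and that the bound on $\|F\|_{L^2(\Upsilon_\lambda^{1/2})}^2$ can be fully absorbed into $\mathcal{A}_-$ independently of how large $\lambda$ is taken.
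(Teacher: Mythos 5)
Your overall blueprint — testing the equation against a weighted copy of $h_{N,\lambda}$, transferring $\mathcal{P}_N$ to the test function via Assumption~\ref{Assumption2}, extracting coercivity from the loss term through the lower mass bound of Lemma~\ref{LemmaMaxwellian1}, and dominating the gain term with Lemma~\ref{LemmaQBound} plus interpolation and absorption — is exactly the strategy of the paper's proof. The difficulty lies in your Step 3. By testing with $(1-|\bar v|)^{10}\eta^{-1}h_{N,\lambda}$, the gain term after passing to $v$-variables and using Assumption~\ref{Assumption2} is, up to constants,
$$\mathcal{A}_+\leq C\int_{\mathbb{R}^6\times\mathbb{S}^2}B_\lambda\, f_{N,\lambda}(v'_*)\,f_{N,\lambda}(v')\,f_{N,\lambda}(v)\,(1+|v|^2)(1+|v|)^{-6}\,d\sigma\,dv_*\,dv.$$
Writing $F=f_{N,\lambda}(1+|v|)^{-2}$, the integrand equals $B_\lambda\,F'_*F'F\cdot (1+|v'_*|)^2(1+|v'|)^2(1+|v|^2)(1+|v|)^{-4}$. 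The residual weight $(1+|v'_*|)^2(1+|v'|)^2(1+|v|)^{-2}$ is unbounded: by energy conservation $|v'|^2+|v'_*|^2=|v|^2+|v_*|^2$, it reaches order $(1+|v_*|^2)^2$ when $|v_*|\gg|v|$ and $|v'|\simeq|v'_*|$. Therefore $\mathcal{A}_+$ is \emph{not} of the form $\int\mathcal{Q}_\lambda^+(F,F)F\,dv$, and Lemma~\ref{LemmaQBound} does not apply as you invoke it; the bound you write for $\mathcal{A}_+$ is unjustified.

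The paper circumvents this by testing with $(1-|\bar v|)^{6}\eta^{-1}h_{N,\lambda}$. The corresponding weight $(1+|v|^2)(1+|v|)^{-2}$ is bounded above and below by positive constants, so the gain term is comparable to $\int\mathcal{Q}_\lambda^+(f_{N,\lambda},f_{N,\lambda})f_{N,\lambda}\,dv$ and Lemma~\ref{LemmaQBound} applies directly to $F=f_{N,\lambda}$. The resulting differential inequality controls $\|f_{N,\lambda}\|_{L^2}^2$, which dominates $\|h_{N,\lambda}\|_{\mathcal{L}^2_{-4}}^2=\int_{\mathbb{R}^3} f_{N,\lambda}^2(1+|v|)^{-4}dv$ and hence gives the proposition (in fact a slightly stronger estimate). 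If you replace your test weight $(1-|\bar v|)^{10}$ by $(1-|\bar v|)^{6}$, the remainder of your outline — the Step~2 coercivity via Lemma~\ref{LemmaMaxwellian1}, the interpolation $\|F\|_{L^{10/7}}\leq\|F\|_{L^1}^{2/5}\|F\|_{L^2}^{3/5}$, the absorption of $\epsilon\|F\|_{L^2(\Upsilon_\lambda^{1/2})}^2$ into the loss, and the $L^2$-appearance argument for the $t_0>0$ claim — matches the paper's proof.
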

 \begin{proof}
 Use $(1-|\bar{v}|)^6\eta^{-1}{h}_{N,\lambda}$ as a test function for $(\ref{BoltzmannBoundedKernel})$, we get
 \begin{eqnarray}\label{PropL2EstimateofhLambdaNEq1}\nonumber
&&\int_{(-1,1)^3}(1-|\bar{v}|)^6\eta^{-1}\partial_t {h}_{N,\lambda}{h}_{N,\lambda}d\bar{v}\\
&=&\int_{(-1,1)^3}\mathcal{P}_N\left\{\int_{(-1,1)^3}\int_{\mathbb{S}^2}\mathcal{B}_\lambda(\bar{v},\bar{v}_*,\sigma)\eta^{-1}\right.\\\nonumber
&&\times\left[\mathcal{C}(\bar{v},\bar{v}_*,\sigma){h}_{N,\lambda}\left(\varphi\left(\frac{\varphi^{-1}(\bar{v})+\varphi^{-1}(\bar{v}_*)}{2}-\sigma\frac{|\varphi^{-1}(\bar{v})-\varphi^{-1}(\bar{v}_*)|}{2}\right)\right)\right.\\\nonumber
&&\times {h}_{N,\lambda}\left(\varphi\left(\frac{\varphi^{-1}(\bar{v})+\varphi^{-1}(\bar{v}_*)}{2}+\sigma\frac{|\varphi^{-1}(\bar{v})-\varphi^{-1}(\bar{v}_*)|}{2}\right)\right)\\\nonumber
&&\left.\left.-{h}_{N,\lambda}(\bar{v}){h}_{N,\lambda}(\bar{v}_*)\right]d\sigma d\bar{v}_*\right\}(1-|\bar{v}|)^6{h}_{N,\lambda}d\bar{v}.
\end{eqnarray}
Define as in $(\ref{hNlamdaComeBackTofNlambda})$
$$f_{N,\lambda}(v)=h_{N,\lambda}(\varphi(v))(1+|v|)^{-4},~~~v\in\mathbb{R}^3,$$
the left hand side of $(\ref{PropL2EstimateofhLambdaNEq1})$ becomes
 \begin{eqnarray}\label{PropL2EstimateofhLambdaNEq2}\nonumber
\int_{(-1,1)^3}(1-|\bar{v}|)^6\eta^{-1}\partial_t {h}_{N,\lambda}{h}_{N,\lambda}d\bar{v}
&=&\int_{\mathbb{R}^3}\partial_t f_{N,\lambda}(1+|v|^2) f_{N,\lambda}(1+|v|)^{-2}dv\\
&=&\frac{1}{2}\frac{d}{dt}\int_{\mathbb{R}^3}| f_{N,\lambda}|^2(1+|v|^2)(1+|v|)^{-2}dv.
\end{eqnarray}
Consider  the right hand side of $(\ref{PropL2EstimateofhLambdaNEq1})$
\begin{eqnarray}\label{PropL2EstimateofhLambdaNEq3}\nonumber
&&\int_{(-1,1)^3}\mathcal{P}_N\left\{\int_{(-1,1)^3}\int_{\mathbb{S}^2}\mathcal{B}_\lambda(\bar{v},\bar{v}_*,\sigma)\eta^{-1}\right.\\\nonumber
&&\times\left[\mathcal{C}(\bar{v},\bar{v}_*,\sigma){h}_{N,\lambda}\left(\varphi\left(\frac{\varphi^{-1}(\bar{v})+\varphi^{-1}(\bar{v}_*)}{2}-\sigma\frac{|\varphi^{-1}(\bar{v})-\varphi^{-1}(\bar{v}_*)|}{2}\right)\right)\right.\\\nonumber
&&\times {h}_{N,\lambda}\left(\varphi\left(\frac{\varphi^{-1}(\bar{v})+\varphi^{-1}(\bar{v}_*)}{2}+\sigma\frac{|\varphi^{-1}(\bar{v})-\varphi^{-1}(\bar{v}_*)|}{2}\right)\right)\\\nonumber
&&\left.\left.-{h}_{N,\lambda}(\bar{v}){h}_{N,\lambda}(\bar{v}_*)\right]d\sigma d\bar{v}_*\right\}(1-|\bar{v}|)^6{h}_{N,\lambda}(\bar{v})d\bar{v}\\
&=&\int_{(-1,1)^3}\left\{\int_{(-1,1)^3}\int_{\mathbb{S}^2}\mathcal{B}_\lambda(\bar{v},\bar{v}_*,\sigma)\eta^{-1}\right.\\\nonumber
&&\times\left[\mathcal{C}(\bar{v},\bar{v}_*,\sigma){h}_{N,\lambda}\left(\varphi\left(\frac{\varphi^{-1}(\bar{v})+\varphi^{-1}(\bar{v}_*)}{2}-\sigma\frac{|\varphi^{-1}(\bar{v})-\varphi^{-1}(\bar{v}_*)|}{2}\right)\right)\right.\\\nonumber
&&\times {h}_{N,\lambda}\left(\varphi\left(\frac{\varphi^{-1}(\bar{v})+\varphi^{-1}(\bar{v}_*)}{2}+\sigma\frac{|\varphi^{-1}(\bar{v})-\varphi^{-1}(\bar{v}_*)|}{2}\right)\right)\\\nonumber
&&\left.\left.-{h}_{N,\lambda}(\bar{v}){h}_{N,\lambda}(\bar{v}_*)\right]d\sigma d\bar{v}_*\right\}\mathcal{P}_N[(1-|\bar{v}|)^6{h}_{N,\lambda}(\bar{v})]d\bar{v}\\\nonumber
&\leq&C_1\int_{(-1,1)^6\times\mathbb{S}^2}\mathcal{B}_\lambda(\bar{v},\bar{v}_*,\sigma)\mathcal{C}(\bar{v},\bar{v}_*,\sigma)\eta^{-1}(1-|\bar{v}|)^6{h}_{N,\lambda}(\bar{v})\\\nonumber
&&\times{h}_{N,\lambda}\left(\varphi\left(\frac{\varphi^{-1}(\bar{v})+\varphi^{-1}(\bar{v}_*)}{2}-\sigma\frac{|\varphi^{-1}(\bar{v})-\varphi^{-1}(\bar{v}_*)|}{2}\right)\right)\\\nonumber
&&\times {h}_{N,\lambda}\left(\varphi\left(\frac{\varphi^{-1}(\bar{v})+\varphi^{-1}(\bar{v}_*)}{2}+\sigma\frac{|\varphi^{-1}(\bar{v})-\varphi^{-1}(\bar{v}_*)|}{2}\right)\right)d\sigma d\bar{v}_*d\bar{v}\\\nonumber
&&-C_2\int_{(-1,1)^6\times\mathbb{S}^2}\mathcal{B}_\lambda(\bar{v},\bar{v}_*,\sigma)\eta^{-1}(1-|\bar{v}|)^6|{h}_{N,\lambda}(\bar{v})|^2{h}_{N,\lambda}(\bar{v}_*)d\sigma d\bar{v}_*d\bar{v},
\end{eqnarray}
where the last inequality follows from assumption $\ref{Assumption2}$ and $C_1$, $C_2$ are some positive constants.
\\ We deduce from  $(\ref{PropL2EstimateofhLambdaNEq3})$ that
\begin{eqnarray}\label{PropL2EstimateofhLambdaNEq4}\nonumber
&&\int_{(-1,1)^3}\mathcal{P}_N\left\{\int_{(-1,1)^3}\int_{\mathbb{S}^2}\mathcal{B}_\lambda(\bar{v},\bar{v}_*,\sigma)\eta^{-1}\right.\\\nonumber
&&\times\left[\mathcal{C}(\bar{v},\bar{v}_*,\sigma){h}_{N,\lambda}\left(\varphi\left(\frac{\varphi^{-1}(\bar{v})+\varphi^{-1}(\bar{v}_*)}{2}-\sigma\frac{|\varphi^{-1}(\bar{v})-\varphi^{-1}(\bar{v}_*)|}{2}\right)\right)\right.\\
&&\times {h}_{N,\lambda}\left(\varphi\left(\frac{\varphi^{-1}(\bar{v})+\varphi^{-1}(\bar{v}_*)}{2}+\sigma\frac{|\varphi^{-1}(\bar{v})-\varphi^{-1}(\bar{v}_*)|}{2}\right)\right)\\\nonumber
&&\left.\left.-{h}_{N,\lambda}(\bar{v}){h}_{N,\lambda}(\bar{v}_*)\right]d\sigma dv_*\right\}(1-|\bar{v}|)^6{h}_{N,\lambda}(\bar{v})d\bar{v}\\\nonumber
&\leq& C_1\int_{\mathbb{R}^6\times \mathbb{S}^2}B_\lambda(|v-v_*|,\sigma)f_{N,\lambda}(v'_*)f_{N,\lambda}(v')f_{N,\lambda}(v)(1+|v|^2)(1+|v|)^{-2}d\sigma dv_*dv\\\nonumber
&& -C_2\int_{\mathbb{R}^6\times \mathbb{S}^2}B_\lambda(|v-v_*|,\sigma)f_{N,\lambda}(v_*)|f_{N,\lambda}(v)|^2(1+|v|^2)(1+|v|)^{-2}d\sigma dv_*dv.
\end{eqnarray}
 Combine $(\ref{PropL2EstimateofhLambdaNEq2})$ and $(\ref{PropL2EstimateofhLambdaNEq4})$, we get
 \begin{eqnarray}\label{PropL2EstimateofhLambdaNEq5}
&&\frac{1}{2}\frac{d}{dt}\int_{\mathbb{R}^3}| f_{N,\lambda}|^2(1+|v|^2)(1+|v|)^{-2}dv\\\nonumber
&\leq& C_1\int_{\mathbb{R}^6\times \mathbb{S}^2}B_\lambda(|v-v_*|,\sigma)f_{N,\lambda}(v'_*)f_{N,\lambda}(v')f_{N,\lambda}(v)(1+|v|^2)(1+|v|)^{-2}d\sigma dv_*dv\\\nonumber
&& -C_2\int_{\mathbb{R}^6\times \mathbb{S}^2}B_\lambda(|v-v_*|,\sigma)f_{N,\lambda}(v_*)|f_{N,\lambda}(v)|^2(1+|v|^2)(1+|v|)^{-2}d\sigma dv_*dv.
\end{eqnarray}
According to lemma $\ref{LemmaQBound}$, the first term on the right hand side of $(\ref{PropL2EstimateofhLambdaNEq5})$ could be bounded by
\begin{eqnarray}\label{PropL2EstimateofhLambdaNEq6}\nonumber
& & C_1\int_{\mathbb{R}^6\times \mathbb{S}^2}B_\lambda(|v-v_*|,\sigma)f_{N,\lambda}(v'_*)f_{N,\lambda}(v')f_{N,\lambda}(v)(1+|v|^2)(1+|v|)^{-2}d\sigma dv_*dv\\
&\leq & C\epsilon^{-\iota}\|f_{N,\lambda}\|_{L^2}\|f_{N,\lambda}\|_{L^{10/7}}\|f_{N,\lambda}\|_{L^1}+\epsilon\|f_{N,\lambda}\|_{L^2(\Upsilon_\lambda^{1/2})}^2\|f_{N,\lambda}\|_{L^1_{|k|}},
\end{eqnarray}
where $C$ is some positive constant.
\\ By the inequality
$$(|v-v_*|\wedge \lambda)^\gamma\geq\frac{1}{4}(|v|\wedge \lambda)^\gamma-|v_*|^\gamma,$$ 
we have
\begin{eqnarray}\label{PropL2EstimateofhLambdaNEq7a}\nonumber
&& \int_{\mathbb{R}^3\times \mathbb{S}^2}B_\lambda(|v-v_*|,\sigma)f_{N,\lambda}(v_*)d\sigma dv_*\\
&\geq  & C\int_{\mathbb{R}^3\times \mathbb{S}^2}(|v-v_*|\wedge \lambda)^\gamma b(\cos(\theta))f_{N,\lambda}(v_*)d\sigma dv_*\\\nonumber
&\geq  & C\int_{\mathbb{R}^3\times \mathbb{S}^2}b(\cos(\theta))\left(\frac{1}{4}(|v|\wedge \lambda)^\gamma-|v_*|^\gamma\right) f_{N,\lambda}(v_*)d\sigma dv_*\\\nonumber
&\geq&C(|v|\wedge \lambda)^\gamma -C\|f_{N,\lambda}\|_{L^1_\gamma}\geq C(|v|\wedge \lambda)^\gamma -C\|f_{0}\|_{L^1_2},
\end{eqnarray}
where the last inequality follows from the $L^1_2$ boundedness of $f_{N,\lambda}$, lemma $\ref{LemmaMaxwellian1}$ and $C$ is some positive constant varying from lines to lines. 
\\ Due to lemma $\ref{LemmaMaxwellian1}$, for a fix time $t_0$, there exist $B(\bar{\mathcal{O}},\alpha)\subset B(\bar{\mathcal{O}},2R)$ and $\epsilon>0$, such that $f_{N,\lambda}(t,v)>\epsilon, ~~~\forall t>t_0$ and $\forall v\in B(\bar{\mathcal{O}},\alpha)$. We have that for all $t>t_0$
\begin{eqnarray*}\nonumber
\int_{\mathbb{R}^3\times \mathbb{S}^2}B_\lambda(|v-v_*|,\sigma)f_{N,\lambda}(t,v_*)d\sigma dv_*
&\geq  & C\int_{\mathbb{R}^3}(|v-v_*|\wedge \lambda)^\gamma f_{N,\lambda}(t,v_*)dv_*\\
&\geq  & C\epsilon\int_{B(\bar{\mathcal{O}},\alpha)}(|v-v_*|\wedge \lambda)^\gamma dv_*.
\end{eqnarray*}
Let $\delta<\lambda$ be a constant not depending on $v$ such that
$$\int_{B(\bar{\mathcal{O}},\alpha)\cap\{|v-v_*|<\delta\}}1dv_*<\frac{1}{2}\int_{B(\bar{\mathcal{O}},\alpha)}1dv_*,$$
then
\begin{eqnarray}\label{PropL2EstimateofhLambdaNEq7b}\nonumber
\int_{\mathbb{R}^3\times \mathbb{S}^2}B_\lambda(|v-v_*|,\sigma)f_{N,\lambda}(t,v_*)d\sigma dv_*
&\geq  & C\epsilon\int_{B(\bar{\mathcal{O}},\alpha)\cap\{|v-v_*|\geq\delta\}}\delta^\gamma dv_*\geq C.
\end{eqnarray}
According to lemma $\ref{LemmaMaxwellian1}$ again, for $0\leq t\leq t_0$
\begin{eqnarray*}\label{PropL2EstimateofhLambdaNEq7c}
\int_{\mathbb{R}^3\times \mathbb{S}^2}B_\lambda(|v-v_*|,\sigma)f_{N,\lambda}(t,v_*)d\sigma dv_*
&\geq  & C\exp(-Ct_0)\int_{\mathcal{D}}(|v-v_*|\wedge \lambda)^\gamma \epsilon^*dv_*.
\end{eqnarray*}
Let $\delta'<\lambda$ be a constant not depending on $v$ such that
$$\int_{\mathcal{D}\cap\{|v-v_*|<\delta'\}}1dv_*<\frac{1}{2}\int_{\mathcal{D}}1dv_*,$$
then
\begin{eqnarray}\label{PropL2EstimateofhLambdaNEq7c}
\int_{\mathbb{R}^3\times \mathbb{S}^2}B_\lambda(|v-v_*|,\sigma)f_{N,\lambda}(t,v_*)d\sigma dv_*
\geq   C\exp(-Ct_0)\int_{\mathcal{D}\cap\{|v-v_*|\geq\delta'\}}\delta'^\gamma \epsilon^*dv_*\geq  C,
\end{eqnarray}
with the notice that the constant $C$ varies from lines to lines and the last inequality follows from the same argument as lemma $\ref{LemmaMaxwellian1}$.\\
Inequalities $(\ref{PropL2EstimateofhLambdaNEq7a})$, $(\ref{PropL2EstimateofhLambdaNEq7b})$ and $(\ref{PropL2EstimateofhLambdaNEq7c})$ imply
\begin{eqnarray}\label{PropL2EstimateofhLambdaNEq7}\nonumber
&& \int_{\mathbb{R}^6\times \mathbb{S}^2}B_\lambda(|v-v_*|,\sigma)f_{N,\lambda}(v_*)|f_{N,\lambda}(v)|^2(1+|v|^2)(1+|v|)^{-2}d\sigma dv_*dv\\
&\geq&C\int_{\mathbb{R}^3}(1+|v|\wedge \lambda)^\gamma |f_{N,\lambda}(v)|^2dv.
\end{eqnarray}
\\ Combine $(\ref{PropL2EstimateofhLambdaNEq5})$, $(\ref{PropL2EstimateofhLambdaNEq6})$ and $(\ref{PropL2EstimateofhLambdaNEq7})$, and choose $\epsilon$ small enough, we get
\begin{eqnarray}\label{PropL2EstimateofhLambdaNEq8}
\frac{d}{dt}\|f_{N,\lambda}\|_{L^2}^2\leq C\epsilon^{-\iota}\|f_{N,\lambda}\|_{L^2}\|f_{N,\lambda}\|_{L^{10/7}}\|f_{N,\lambda}\|_{L^1}-(C-\epsilon)\|f_{N,\lambda}\|_{L^2(\Upsilon_\lambda^{1/2})}^2,
\end{eqnarray}
where $C$ is some positive constant varying from lines to lines. By a classical argument as \cite{MouhotVillani:2004:RTS}, there exist constants $C$, $N_0$ such that
$$\forall s\geq 0, \forall t_0>0, \forall N>N_0~~~~ \sup_{t\geq t_0}\|f_{N,\lambda}\|_{L^2}<C.
$$
\end{proof}
\subsection{The convergence analysis}
\begin{theorem}\label{ThmBondh_N}
Suppose that assumptions \ref{Assumption1} and \ref{Assumption2} are satisfied. The solution $h_N$ of $(\ref{SpectralMethod2})$ is positive and uniformly bounded with respect to $N$ in $\mathcal{L}^1_{2}$ and $\mathcal{L}^2_{-4}$ norms, i.e. for all $t_0>0$ there exist constants $C$, $N_0$  such that
$$,\forall N\in\mathbb{N},N>N_0~~~ \sup_{t\geq t_0}\|h_{N}\|_{\mathcal{L}^1_2}<C,
$$
and
$$ \forall t_0>0, \forall N\in\mathbb{N},N>N_0~~~ \sup_{t\geq t_0}\|h_{N}\|_{\mathcal{L}^2_{-4}}<C.
$$
Moreover there are positive constants $\hat{C}_1$, $\hat{C}_2$, $N_0$, such that for all $\bar{v}$ in the support of $h_{N}$
$$h_{N}(\bar{v},t)\geq \hat{C}_1\exp\left(-\hat{C}_2\left|\frac{|\bar{v}|}{1-|\bar{v}|}\right|^2\right)~~~\forall N\in\mathbb{N}, N>N_0.$$
The constants $\hat{C}_1$ and $\hat{C}_2$ depend on $t$, however, they could be chosen uniformly for all $t>t_0$, where $t_0$ is an arbitrary positive time.
\end{theorem}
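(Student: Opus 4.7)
The plan is to deduce Theorem \ref{ThmBondh_N} from the corresponding statements for the regularized solution $h_{N,\lambda}$ by passing to the limit $\lambda\to\infty$. All of the ingredients are already in place: Proposition \ref{PropoL1EstimatehNLambda} gives positivity and a uniform-in-$(N,\lambda)$ bound in $\mathcal{L}^1(\eta^{-1})$ (which is equivalent to an $\mathcal{L}^1_2$ bound on the support of $h_{N,\lambda}$), Proposition \ref{PropoMaxwellianLowerBound} gives a Maxwellian lower bound with constants $\tilde C_1,\tilde C_2$ independent of $N$ and $\lambda$, and Proposition \ref{PropL2EstimateofhLambdaN} gives a uniform $\mathcal{L}^2_{-4}$ bound for $N,\lambda$ large enough. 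Since each of these estimates is uniform in $\lambda$, the content of the theorem reduces to justifying the transfer to $h_N$.

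First I would set up the compactness argument. For each fixed $N$, the filter $F_N$ forces $h_{N,\lambda}$ to be supported in $(-\zeta_N,\zeta_N)^3$ and to lie in the finite-dimensional space $\mathcal P_N(\mathcal V_N)$, so writing $h_{N,\lambda}(t)=\sum_k a_{N,k}^\lambda(t)\Phi_{N,k}$, the $\mathcal{L}^1(\eta^{-1})$ bound translates into a uniform-in-$\lambda$ bound on the coefficients $a_{N,k}^\lambda(t)$. Nagumo's criterion applied to the ODE system $(\ref{ODESystem})$ with kernel $B_\lambda$ ensures global existence of the $a_{N,k}^\lambda$, while the quadratic right-hand side combined with the uniform coefficient bound yields equicontinuity on any compact time interval. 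Extracting a diagonal subsequence $\lambda_j\to\infty$ and applying Arzel\`a--Ascoli (in finite dimensions), one obtains $a_{N,k}^{\lambda_j}(t)\to a_{N,k}(t)$ uniformly on compact intervals, hence $h_{N,\lambda_j}\to h_N^\ast:=\sum_k a_{N,k}\Phi_{N,k}$ in every reasonable topology. The Dunford--Pettis and \v Smulian theorems referred to earlier are the infinite-dimensional substitute for this last step if one prefers to work with the $\mathcal{L}^1_2$ and $\mathcal{L}^2_{-4}$ bounds directly, yielding respectively weak $L^1$ and weak $L^2$ cluster points.

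Next I would identify the limit with $h_N$. On the bounded support $(-\zeta_N,\zeta_N)^3$, the pre-collisional velocities $v',v'_*$ appearing in the gain term remain in a bounded set and the kinetic factor $B_\lambda(|v-v_*|,\sigma)$ converges to $B(|v-v_*|,\sigma)$ uniformly (in fact it is eventually equal to $B$ for $\lambda$ large depending on $N$). Dominated convergence therefore allows passage to the limit in every collision integral appearing on the right-hand side of $(\ref{BoltzmannBoundedKernel})$, so that $h_N^\ast$ solves $(\ref{SpectralMethod2})$ with initial condition $h_{0_N}$. Proposition \ref{PropoExistenceSystemODE} then forces $h_N^\ast=h_N$.

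Finally, the three conclusions of the theorem follow by stability: positivity and the $\mathcal{L}^1_2$ and $\mathcal{L}^2_{-4}$ bounds pass to the limit by lower semicontinuity of the respective norms and pointwise limits preserve non-negativity, while the Maxwellian lower bound of Proposition \ref{PropoMaxwellianLowerBound} is a pointwise inequality with $\lambda$-independent constants, hence survives the limit with $\hat C_1=\tilde C_1$, $\hat C_2=\tilde C_2$. The main obstacle I anticipate is not any single estimate but rather the bookkeeping required to show that the $\lambda$-convergence is compatible with the nonlinear, filtered projection $\mathbb P_N$; this is what dictates the use of the finite-dimensional structure of $\mathcal V_N$, since the convergence of the coefficients $a_{N,k}^\lambda$ automatically passes through any continuous nonlinearity on $\mathcal P_N(\mathcal V_N)$, and in particular through the quadratic form $Q_{N,\lambda}^+$ in the ODE for $a_{N,k}^\lambda$.
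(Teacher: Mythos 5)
Your proposal is correct and follows the same overall strategy as the paper: establish uniform-in-$\lambda$ bounds for $h_{N,\lambda}$ via Propositions \ref{PropoL1EstimatehNLambda}, \ref{PropoMaxwellianLowerBound}, \ref{PropL2EstimateofhLambdaN}, extract a convergent subsequence as $\lambda\to\infty$, identify the limit with $h_N$ by the uniqueness of Proposition \ref{PropoExistenceSystemODE}, and transfer the estimates. The one place you diverge is worth flagging: the paper invokes Dunford--Pettis and \v Smulian to obtain a weak $\mathcal{L}^1$ cluster point and then asserts (without further detail, citing Arkeryd) that this weak limit is a solution of $(\ref{SpectralMethod2})$; passing to the limit in the quadratic collision term under weak $L^1$ convergence is, strictly speaking, a nontrivial step. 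Your observation that for fixed $N$ the whole dynamics lives in the finite-dimensional space $\mathcal P_N(\mathcal V_N)$ — so the $\mathcal{L}^1(\eta^{-1})$ bound becomes a coefficient bound, the quadratic right-hand side gives equicontinuity, and Arzel\`a--Ascoli gives \emph{strong} uniform convergence of $a_{N,k}^{\lambda_j}(t)$ — is a cleaner and more elementary route. Strong convergence of the coefficients, combined with your remark that $B_\lambda\equiv B$ on the relevant bounded velocity set once $\lambda$ is large (depending on $N$), makes the passage to the limit in the nonlinear collision integral immediate by dominated convergence, sidestepping the weak-compactness machinery entirely. In short: same proof architecture, but your finite-dimensional compactness argument buys a more transparent identification of the limit, at no extra cost.
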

\begin{proof}
 Since the sequence $\{h_{N,\lambda}\}$ is uniformly bounded with respect to $N$ and $\lambda$ in $\mathcal{L}^1_{2}$ and $\mathcal{L}^2_{-4}$ norms, the proof is direct and similar to the  proofs of classical cases (for example theorem 3.2 \cite{Arkeryd:1972:OBE}). First we observe that since $h_{0_N}$ is a sum of finite compactly supported wavelets not containing the extreme points of $-1$ and $1$, then $h_{0_N}$ belongs to $\mathcal{L}^2_{-4}$. Hence  the sequence $\{h_{N,\lambda}\}$ is uniformly bounded with respect to $\lambda$ (but not $N$) in $\mathcal{L}^1_{2}$ and $\mathcal{L}^2_{-4}$ norms for all time. By Nagumo's criterion, Dunford-Pettis theorem and Smulian theorem (see \cite{Edwards:1965:FUN} and \cite{McShane:1944:Int}) there exists a subsequence $\{h_{N,\lambda_j}\}_{j=1}^\infty$ converging weakly to a positive function $\check{h}_N$ in $\mathcal{L}^1$, which is a solution of $(\ref{SpectralMethod2})$ and bounded from below by a Maxwellian. According to proposition $\ref{PropoExistenceSystemODE}$, the linear ODEs $(\ref{SpectralMethod2})$ has a unique solution, then $ {h}_N\equiv\check{h}_N\geq0.$ Since the proofs of propositions $\ref{PropL2EstimateofhLambdaN}$ and $\ref{PropoL1EstimatehNLambda}$ are still valid when $\lambda=+\infty$, we infer that $h_N$ is uniformly bounded with respect to $N$ in $\mathcal{L}^1_{2}$ and $\mathcal{L}^2_{-4}$ norms and it is also bounded from below by a Maxwellian truncated in its support. 
\end{proof}
\begin{theorem}\label{ThmConvergence}Suppose that assumptions \ref{Assumption1} and \ref{Assumption2} are satisfied. If $f_0\in L^1_{2+\gamma}$, the solution of $(\ref{SpectralMethod2})$ tends to the solution of $(\ref{BoltzmannNewFormulation})$ in the energy sense
$$\sup_{t\in[0,T]}\lim_{N\to\infty}\|h_N(t)-h(t)\|_{\mathcal{L}_2^1}=0, \forall T\in\mathbb{R},$$
which implies the limits of the mass and momentum
$$\sup_{t\in[0,T]}\lim_{N\to\infty}\|h_N(t)-h(t)\|_{\mathcal{L}^1}=0, \forall T\in\mathbb{R},$$
$$\sup_{t\in[0,T]}\lim_{N\to\infty}\|h_N(t)-h(t)\|_{\mathcal{L}_1^1}=0, \forall T\in\mathbb{R}.$$
The algorithm also has a spectral accuracy. 
\end{theorem}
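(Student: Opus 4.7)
\textbf{Proof proposal for Theorem \ref{ThmConvergence}.}

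The plan is to set up a Duhamel--Gronwall estimate for the error $r_N := h_N - h$ in the $\mathcal{L}^1_2$ norm. Subtracting the two equations and using the bilinearity of the new collision operator $Q^{new}$ appearing in $(\ref{BoltzmannHomoNewFormulation})$ and $(\ref{SpectralMethod3})$, we write
\begin{equation*}
\partial_t r_N \;=\; \mathbb{P}_N\bigl[Q^{new}(r_N,h_N)+Q^{new}(h,r_N)\bigr] \;+\; (\mathbb{P}_N-I)\,Q^{new}(h,h).
\end{equation*}
Integrating in time yields the Duhamel identity
\begin{equation*}
r_N(t)=r_N(0)+\int_0^t \mathbb{P}_N\!\left[Q^{new}(r_N,h_N)+Q^{new}(h,r_N)\right]\!ds+\int_0^t(\mathbb{P}_N-I)Q^{new}(h,h)\,ds,
\end{equation*}
which is the object of study. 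Passing through the change of variables $v=\varphi^{-1}(\bar{v})$ via the identity $\|h\|_{\mathcal{L}^1_s}=\|f\|_{L^1_s}$, everything reduces to proving that $f_N\to f$ in $L^1_2(\mathbb{R}^3)$.

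Next I would take the $\mathcal{L}^1_2$ norm and estimate the three contributions separately. For the initial data term $\|r_N(0)\|_{\mathcal{L}^1_2}=\|h_{0_N}-h_0\|_{\mathcal{L}^1_2}$, convergence to $0$ follows from the approximation properties of the multiresolution analysis together with $(\ref{L1Projection})$. For the bilinear term one uses the fact that $\mathbb{P}_N(\varrho)=\eta\,\mathcal{P}_N(\eta^{-1}\varrho)$ is bounded on $\mathcal{L}^1_2$ (since $\mathcal{P}_N$ is bounded on $\mathcal{L}^1$ with constant independent of $N$ by $(\ref{L1Projection})$), reducing matters to bounding $\|Q^{new}(r_N,h_N)\|_{\mathcal{L}^1_2}$ and $\|Q^{new}(h,r_N)\|_{\mathcal{L}^1_2}$. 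By the classical weighted $L^1$ estimate for the Boltzmann collision operator,
\begin{equation*}
\|Q(f,g)\|_{L^1_2}\leq C\bigl(\|f\|_{L^1_{2+\gamma}}\|g\|_{L^1_2}+\|f\|_{L^1_2}\|g\|_{L^1_{2+\gamma}}\bigr),
\end{equation*}
these terms are dominated by $C(\|h\|_{\mathcal{L}^1_{2+\gamma}}+\|h_N\|_{\mathcal{L}^1_{2+\gamma}})\|r_N\|_{\mathcal{L}^1_2}$. The hypothesis $f_0\in L^1_{2+\gamma}$ together with Povzner's inequality propagates the $L^1_{2+\gamma}$ moment for $h$; the analogous uniform (in $N$) bound for $h_N$ is exactly Theorem \ref{ThmPropaPolyMoments}, established later. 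This yields a linear Gronwall-type inequality
\begin{equation*}
\|r_N(t)\|_{\mathcal{L}^1_2}\;\leq\;\|r_N(0)\|_{\mathcal{L}^1_2}+C\!\int_0^t\|r_N(s)\|_{\mathcal{L}^1_2}\,ds+\int_0^t\|(\mathbb{P}_N-I)Q^{new}(h,h)(s)\|_{\mathcal{L}^1_2}\,ds,
\end{equation*}
and Gronwall's lemma reduces convergence of $r_N$ to convergence of the projection error.

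The main obstacle, and the step that will require the most care, is the projection-error term $\|(\mathbb{P}_N-I)Q^{new}(h,h)\|_{\mathcal{L}^1_2}$. The strategy here is twofold. First, one verifies that the collision integral $Q^{new}(h,h)$, pulled back to $\mathbb{R}^3$, lies in a space on which the multiresolution projections $\mathcal{P}_N$ converge strongly: using the $\mathcal{L}^2_{-4}$ bound on $h_N$ and $h$ from Theorem \ref{ThmBondh_N}, together with the gain/loss splitting and the $L^\infty$-boundedness $(\ref{LinftyProjection})$ of $\mathcal{P}_N$, one shows $Q^{new}(h,h)(s)\in\mathcal{L}^1_2$ for almost every $s\in[0,T]$. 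Since wavelet projections converge strongly on $\mathcal{L}^1_2$ (by density of $\bigcup_N\mathcal{V}_N$ and uniform $\mathcal{L}^1$-boundedness), we have $(\mathbb{P}_N-I)Q^{new}(h,h)(s)\to 0$ pointwise in $s$. The uniform bound $\|(\mathbb{P}_N-I)Q^{new}(h,h)(s)\|_{\mathcal{L}^1_2}\leq C\|Q^{new}(h,h)(s)\|_{\mathcal{L}^1_2}\in L^1(0,T)$ then allows dominated convergence, giving convergence to zero of the third term, uniformly on $[0,T]$. Combined with Gronwall, this proves $\sup_{[0,T]}\|h_N-h\|_{\mathcal{L}^1_2}\to 0$; the $\mathcal{L}^1$ and $\mathcal{L}^1_1$ statements follow from $\mathcal{L}^1_2\hookrightarrow\mathcal{L}^1_s$ for $s\leq 2$.

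Finally, for the spectral accuracy claim, the argument above is refined by quantifying the wavelet best-approximation error: if $h$ possesses extra smoothness (say in a Sobolev scale associated with the multiresolution), then $\|(\mathbb{P}_N-I)Q^{new}(h,h)\|_{\mathcal{L}^1_2}=O(2^{-sN})$ for any $s>0$ up to the regularity of $h$, and similarly for the initial datum error; Gronwall then transports this rate to $\|r_N\|_{\mathcal{L}^1_2}$ on $[0,T]$.
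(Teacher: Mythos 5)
Your decomposition $\partial_t r_N = \mathbb{P}_N[Q(r_N,h_N)+Q(h,r_N)]+(\mathbb{P}_N-I)Q(h,h)$ with $r_N=h_N-h$ is genuinely different from what the paper does, and it has a real gap under the stated hypotheses. Bounding $\|Q(r_N,h_N)\|_{\mathcal{L}^1_2}$ and $\|Q(h,r_N)\|_{\mathcal{L}^1_2}$ by $C(\|h\|_{\mathcal{L}^1_{2+\gamma}}+\|h_N\|_{\mathcal{L}^1_{2+\gamma}})\|r_N\|_{\mathcal{L}^1_2}$ requires a uniform-in-$N$ bound on $\|h_N\|_{\mathcal{L}^1_{2+\gamma}}$. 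You invoke Theorem~\ref{ThmPropaPolyMoments} for this, but that theorem needs Assumption~\ref{AssumptionPolynomialMoments} on the filter, which Theorem~\ref{ThmConvergence} does \emph{not} assume (it is stated only under Assumptions~\ref{Assumption1} and~\ref{Assumption2}). So your Gronwall constant is not controlled by the hypotheses of the theorem you are trying to prove.

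The paper avoids this issue by not comparing $h_N$ directly with $h$. It compares $h_N$ with $\mathcal{P}_N h$ (equivalently $f_N$ with $\tilde f_N=\mathcal{P}_N f$), uses the test function $\vartheta(v)=(1+|v|^2)\mathcal{P}_N(\mathrm{sign}(f_N-\tilde f_N))$, and then symmetrizes the collision integral into the Povzner combination $\vartheta(v'_*)+\vartheta(v')-\vartheta(v_*)-\vartheta(v)$. Decomposing $f_{N*}f_N-f_*f=(f_{N*}f_N-\tilde f_{N*}\tilde f_N)+(\tilde f_{N*}\tilde f_N-f_*f)$, the first piece is split over four sign regimes $\mathcal{I}_1,\dots,\mathcal{I}_4$: on $\mathcal{I}_1$ the Povzner combination has a definite sign and the contribution is nonpositive, while on $\mathcal{I}_2,\mathcal{I}_3,\mathcal{I}_4$ the structure of $\vartheta$ makes the contribution $\leq C\|f_N-\tilde f_N\|_{L^1_2}$ with a constant depending only on $\sup_t\|\tilde f_N\|_{L^1_{2+\gamma}}$, which is controlled by $f_0\in L^1_{2+\gamma}$ and $(\ref{L1Projection})$ — \emph{not} by any property of $f_N$. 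The second piece is the consistency error and is what Gronwall is applied to. The point you are missing is that this sign/symmetrization argument lets all the weighted moments land on $f$ and $\tilde f_N$, never on $f_N$, so the extra filter assumption becomes unnecessary. If you want to rescue your own route, you would have to either prove the uniform $\mathcal{L}^1_{2+\gamma}$ bound for $h_N$ from Assumptions~\ref{Assumption1}–\ref{Assumption2} alone (unlikely, since the paper introduces Assumption~\ref{AssumptionPolynomialMoments} precisely for this purpose) or add that assumption to the theorem statement, which changes the result.
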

\begin{proof} Take the difference between $(\ref{SpectralMethod2})$ and $(\ref{BoltzmannNewFormulation})$, multiply both sides with $\eta^{-1}$, we get
\begin{eqnarray}\label{ThmConvergenceEq1}
&&\partial_t ({h}_N-\mathcal{P}_Nh)\eta^{-1}\\\nonumber
&=&\mathcal{P}_N\left\{\int_{(-1,1)^3}\int_{\mathbb{S}^2}\mathcal{B}(\bar{v},\bar{v}_*,\sigma)\mathcal{C}(\bar{v},\bar{v}_*,\sigma)\eta^{-1}\right.\\\nonumber
&&\times\left.{h}_N\left(\varphi\left(\frac{\varphi^{-1}(\bar{v})+\varphi^{-1}(\bar{v}_*)}{2}-\sigma\frac{|\varphi^{-1}(\bar{v})-\varphi^{-1}(\bar{v}_*)|}{2}\right)\right)\right.\\\nonumber
&&\left.\times {h}_N\left(\varphi\left(\frac{\varphi^{-1}(\bar{v})+\varphi^{-1}(\bar{v}_*)}{2}+\sigma\frac{|\varphi^{-1}(\bar{v})-\varphi^{-1}(\bar{v}_*)|}{2}\right)\right)d\sigma d\bar{v}_*\right.
\\\nonumber
&&-\left.\int_{(-1,1)^3}\int_{\mathbb{S}^2}\mathcal{B}(\bar{v},\bar{v}_*,\sigma)\eta^{-1}h_N(\bar{v})h_N(\bar{v}_*)d\sigma d\bar{v}_*\right.\\\nonumber
&&-\left.\int_{(-1,1)^3}\int_{\mathbb{S}^2}\mathcal{B}(\bar{v},\bar{v}_*,\sigma)\mathcal{C}(\bar{v},\bar{v}_*,\sigma)\eta^{-1}\right.\\\nonumber
&&\times\left.{h}\left(\varphi\left(\frac{\varphi^{-1}(\bar{v})+\varphi^{-1}(\bar{v}_*)}{2}-\sigma\frac{|\varphi^{-1}(\bar{v})-\varphi^{-1}(\bar{v}_*)|}{2}\right)\right)\right.\\\nonumber
&&\left.\times {h}\left(\varphi\left(\frac{\varphi^{-1}(\bar{v})+\varphi^{-1}(\bar{v}_*)}{2}+\sigma\frac{|\varphi^{-1}(\bar{v})-\varphi^{-1}(\bar{v}_*)|}{2}\right)\right)d\sigma d\bar{v}_*\right.
\\\nonumber
&&+\left.\int_{(-1,1)^3}\int_{\mathbb{S}^2}\mathcal{B}(\bar{v},\bar{v}_*,\sigma)\eta^{-1}h(\bar{v})h(\bar{v}_*)d\sigma d\bar{v}_*\right\}
\end{eqnarray}
We deduce from $(\ref{ThmConvergenceEq1})$ and $(\ref{L1Projection})$ that
\begin{eqnarray}\label{ThmConvergenceEq2}\nonumber
&&\frac{d}{dt}\int_{(-1,1)^3}|{h}_N-\mathcal{P}_Nh|\eta^{-1}d\bar{v}\\
&=&\left\{\int_{(-1,1)^3}\int_{\mathbb{S}^2}\mathcal{B}(\bar{v},\bar{v}_*,\sigma)\mathcal{C}(\bar{v},\bar{v}_*,\sigma)\eta^{-1}\right.\\\nonumber
&&\times\left.{h}_N\left(\varphi\left(\frac{\varphi^{-1}(\bar{v})+\varphi^{-1}(\bar{v}_*)}{2}-\sigma\frac{|\varphi^{-1}(\bar{v})-\varphi^{-1}(\bar{v}_*)|}{2}\right)\right)\right.\\\nonumber
&&\left.\times {h}_N\left(\varphi\left(\frac{\varphi^{-1}(\bar{v})+\varphi^{-1}(\bar{v}_*)}{2}+\sigma\frac{|\varphi^{-1}(\bar{v})-\varphi^{-1}(\bar{v}_*)|}{2}\right)\right)d\sigma d\bar{v}_*\right.
\\\nonumber
&&-\left.\int_{(-1,1)^3}\int_{\mathbb{S}^2}\mathcal{B}(\bar{v},\bar{v}_*,\sigma)\eta^{-1}h_N(\bar{v})h_N(\bar{v}_*)d\sigma d\bar{v}_*\right.\\\nonumber
&&-\left.\int_{(-1,1)^3}\int_{\mathbb{S}^2}\mathcal{B}(\bar{v},\bar{v}_*,\sigma)\mathcal{C}(\bar{v},\bar{v}_*,\sigma)\eta^{-1}\right.\\\nonumber
&&\times\left.{h}\left(\varphi\left(\frac{\varphi^{-1}(\bar{v})+\varphi^{-1}(\bar{v}_*)}{2}-\sigma\frac{|\varphi^{-1}(\bar{v})-\varphi^{-1}(\bar{v}_*)|}{2}\right)\right)\right.\\\nonumber
&&\left.\times {h}\left(\varphi\left(\frac{\varphi^{-1}(\bar{v})+\varphi^{-1}(\bar{v}_*)}{2}+\sigma\frac{|\varphi^{-1}(\bar{v})-\varphi^{-1}(\bar{v}_*)|}{2}\right)\right)d\sigma d\bar{v}_*\right.
\\\nonumber
&&+\left.\int_{(-1,1)^3}\int_{\mathbb{S}^2}\mathcal{B}(\bar{v},\bar{v}_*,\sigma)\eta^{-1}h(\bar{v})h(\bar{v}_*)d\sigma d\bar{v}_*\right\}\mathcal{P}_N[sign(h_N-\mathcal{P}_Nh)]d\bar{v},
\end{eqnarray}
where $sign(h_N-\mathcal{P}_Nh)=1$ if $h_N-\mathcal{P}_Nh>0$, $sign(h_N-\mathcal{P}_Nh)=-1$ if $h_N-\mathcal{P}_Nh<0$ and $sign(h_N-\mathcal{P}_Nh)=0$ if $h_N-\mathcal{P}_Nh=0$. Set
$$f_N(v)=h_N(\varphi(v))(1+|v|)^{-4},$$
$$\vartheta(v)=(1+|v|^2)\mathcal{P}_N(sign(f_N-\tilde{f}_N)(v))=(1+|v|^2)\mathcal{P}_N(sign(f_N-\mathcal{P}_N{f})(v)),$$
 we transform inequality $(\ref{ThmConvergenceEq2})$ into
\begin{eqnarray}\label{ThmConvergenceEq3}\nonumber
&&\frac{d}{dt}\int_{\mathbb{R}^3}|{f}_N-\tilde{f}_N|(1+|v|^2)d{v}\\
&= &\int_{\mathbb{R}^6\times\mathbb{S}^2}B(|v-v_*|,\sigma)[f_{N_*}'f_{N}'-f_{N_*}f_{N}]\vartheta(v)d\sigma dv_*dv\\\nonumber
& &-\int_{\mathbb{R}^6\times\mathbb{S}^2}B(|v-v_*|,\sigma)[f_{*}'f'-f_{*}f]\vartheta(v)d\sigma dv_*dv\\\nonumber
&= &\frac{1}{2}\int_{\mathbb{R}^6\times\mathbb{S}^2}B(|v-v_*|,\sigma)[f_{N_*}f_{N}-f_{*}f][\vartheta(v'_*)+\vartheta(v')-\vartheta(v_*)-\vartheta(v)]d\sigma dv_*dv\\\nonumber
&= &\frac{1}{2}\int_{\mathbb{R}^6\times\mathbb{S}^2}B(|v-v_*|,\sigma)[f_{N_*}f_{N}-\tilde{f}_{N_*}\tilde{f}_{N}][\vartheta(v'_*)+\vartheta(v')-\vartheta(v_*)-\vartheta(v)]d\sigma dv_*dv\\\nonumber
& &+\frac{1}{2}\int_{\mathbb{R}^6\times\mathbb{S}^2}B(|v-v_*|,\sigma)[\tilde{f}_{N_*}\tilde{f}_{N}-f_{*}f][\vartheta(v'_*)+\vartheta(v')-\vartheta(v_*)-\vartheta(v)]d\sigma dv_*dv\\\nonumber
&= &\frac{1}{2}\int_{\{(f_N-\tilde{f}_N)(f_{N_*}-\tilde{f}_{N_*})> 0\}}B(|v-v_*|,\sigma)[f_{N_*}f_{N}-\tilde{f}_{N_*}\tilde{f}_{N}]\\\nonumber
&&\times[\vartheta(v'_*)+\vartheta(v')-\vartheta(v_*)-\vartheta(v)]d\sigma dv_*dv\\\nonumber
& &+\int_{\{(f_N-\tilde{f}_N)> 0>(f_{N_*}-\tilde{f}_{N_*})\}}B(|v-v_*|,\sigma)[f_{N_*}f_{N}-\tilde{f}_{N_*}\tilde{f}_{N}]\\\nonumber
&&\times[\vartheta(v'_*)+\vartheta(v')-\vartheta(v_*)-\vartheta(v)]d\sigma dv_*dv\\\nonumber
& &+\int_{\{(f_N-\tilde{f}_N)> 0=(f_{N_*}-\tilde{f}_{N_*})\}}B(|v-v_*|,\sigma)[f_{N_*}f_{N}-\tilde{f}_{N_*}\tilde{f}_{N}]\\\nonumber
&&\times[\vartheta(v'_*)+\vartheta(v')-\vartheta(v_*)-\vartheta(v)]d\sigma dv_*dv\\\nonumber
& &+\int_{\{(f_N-\tilde{f}_N)= 0>(f_{N_*}-\tilde{f}_{N_*})\}}B(|v-v_*|,\sigma)[f_{N_*}f_{N}-\tilde{f}_{N_*}\tilde{f}_{N}]\\\nonumber
&&\times[\vartheta(v'_*)+\vartheta(v')-\vartheta(v_*)-\vartheta(v)]d\sigma dv_*dv\\\nonumber
& &+\frac{1}{2}\int_{\mathbb{R}^6\times\mathbb{S}^2}B(|v-v_*|,\sigma)[\tilde{f}_{N_*}\tilde{f}_{N}-f_{*}f][\vartheta(v'_*)+\vartheta(v')-\vartheta(v_*)-\vartheta(v)]d\sigma dv_*dv.
\end{eqnarray}
On the set $\mathcal{I}_1=\{(f_N-\tilde{f}_{N})(f_{N_*}-\tilde{f}_{N_*})> 0\}$, we can see that  $$[\vartheta(v'_*)+\vartheta(v')-\vartheta(v_*)-\vartheta(v)]sign(f_N-\tilde{f}_{N})\leq 0\mbox{ on }  \mathcal{I}_1.$$
Therefore  
\begin{eqnarray}\label{ThmConvergenceEq3a}
& &\int_{\{(f_N-\tilde{f}_{N})(f_{N_*}-\tilde{f}_{N_*})> 0\}}B(|v-v_*|,\sigma)\\\nonumber
& &\times[f_{N_*}f_{N}-\tilde{f}_{N_*}\tilde{f}_{N}][\vartheta(v'_*)+\vartheta(v')-\vartheta(v_*)-\vartheta(v)]d\sigma dv_*dv\leq 0.
\end{eqnarray}
On the set $\mathcal{I}_2=\{(f_N-\tilde{f}_{N})>0>(f_{N_*}-\tilde{f}_{N_*})\}$, $\vartheta(v_*)=-(1+|v_*|^2)$ and $\vartheta(v)=(1+|v|^2)$. Hence
$$-2(1+|v|^2)\leq\vartheta(v'_*)+\vartheta(v')-\vartheta(v_*)-\vartheta(v)\leq 2(1+|v_*|^2)\mbox{ on }  \mathcal{I}_2,$$
which leads to
\begin{eqnarray}\label{ThmConvergenceEq3b}
& &\int_{\{(f_N-\tilde{f}_{N})>0>(f_{N_*}-\tilde{f}_{N_*})\}}B(|v-v_*|,\sigma)[f_{N_*}f_{N}-\tilde{f}_{N_*}\tilde{f}_{N}]\\\nonumber
&&\times[\vartheta(v'_*)+\vartheta(v')-\vartheta(v_*)-\vartheta(v)]d\sigma dv_*dv\\\nonumber
&=&\int_{\{(f_N-\tilde{f}_{N})>0>(f_{N_*}-\tilde{f}_{N_*})\}}B(|v-v_*|,\sigma)[f_{N}-\tilde{f}_{N}]f_{N_*}\\\nonumber
&&\times[\vartheta(v'_*)+\vartheta(v')-\vartheta(v_*)-\vartheta(v)]d\sigma dv_*dv\\\nonumber
&&+\int_{\{(f_N-\tilde{f}_{N})>0>(f_{N_*}-\tilde{f}_{N_*})\}}B(|v-v_*|,\sigma)[f_{N_*}-\tilde{f}_{N_*}]\tilde{f}_{N}\\\nonumber
&&\times[\vartheta(v'_*)+\vartheta(v')-\vartheta(v_*)-\vartheta(v)]d\sigma dv_*dv\\\nonumber
&\leq&C\int_{\mathbb{R}^6\times\mathbb{S}^2}B(|v-v_*|,\sigma)|f_{N}-\tilde{f}_{N}|\tilde{f}_{N_*}(1+|v_*|)^2d\sigma dv_*dv\\\nonumber
&&+C\int_{\mathbb{R}^6\times\mathbb{S}^2}B(|v-v_*|,\sigma)|f_{N_*}-\tilde{f}_{N_*}|\tilde{f}_{N}(1+|v|)^2d\sigma dv_*dv\\\nonumber
&\leq&C\|f_N-\tilde{f}_{N}\|_{L^1_2}.
\end{eqnarray}
On the set $\mathcal{I}_3=\{(f_N-\tilde{f}_{N})>0=(f_{N_*}-\tilde{f}_{N_*})\}$, $\vartheta(v_*)=0$ and $\vartheta(v)=(1+|v|^2)$. Hence
$$ \vartheta(v'_*)+\vartheta(v')-\vartheta(v_*)-\vartheta(v)\leq (1+|v_*|^2)\mbox{ on } \mathcal{I}_3,$$
which leads to
\begin{eqnarray}\label{ThmConvergenceEq3c}
& &\int_{\{(f_N-\tilde{f}_{N})>0=(f_{N_*}-\tilde{f}_{N_*})\}}B(|v-v_*|,\sigma)[f_{N_*}f_{N}-\tilde{f}_{N_*}\tilde{f}_{N}]\\\nonumber
&&\times[\vartheta(v'_*)+\vartheta(v')-\vartheta(v_*)-\vartheta(v)]d\sigma dv_*dv\\\nonumber
&=&\int_{\{(f_N-\tilde{f}_{N})>0=(f_{N_*}-\tilde{f}_{N_*})\}}B(|v-v_*|,\sigma)[f_{N}-\tilde{f}_{N}]\tilde{f}_{N_*}\\\nonumber
&&\times[\vartheta(v'_*)+\vartheta(v')-\vartheta(v_*)-\vartheta(v)]d\sigma dv_*dv\\\nonumber
&=&\int_{\{(f_N-\tilde{f}_{N})>0=(f_{N_*}-\tilde{f}_{N_*})\}}B(|v-v_*|,\sigma)[f_{N}-\tilde{f}_{N}]\tilde{f}_{N_*}(1+|v_*|^2)d\sigma dv_*dv\\\nonumber
&\leq&C\|f_N-\tilde{f}_{N}\|_{L^1_2}.
\end{eqnarray}
On the set $\mathcal{I}_4=\{(f_N-\tilde{f}_{N})=0>(f_{N_*}-\tilde{f}_{N_*})\}$, $\vartheta(v_*)=-(1+|v_*|^2)$ and $\vartheta(v)=0$. Hence
$$-(1+|v|^2)\leq \vartheta(v'_*)+\vartheta(v')-\vartheta(v_*)-\vartheta(v)\mbox{ on } \mathcal{I}_4,$$
which leads to
\begin{eqnarray}\label{ThmConvergenceEq3d}
& &\int_{\{(f_N-\tilde{f}_{N})=0>(f_{N_*}-\tilde{f}_{N_*})\}}B(|v-v_*|,\sigma)[f_{N_*}f_{N}-\tilde{f}_{N_*}\tilde{f}_{N}]\\\nonumber
&&\times[\vartheta(v'_*)+\vartheta(v')-\vartheta(v_*)-\vartheta(v)]d\sigma dv_*dv\\\nonumber
&=&\int_{\{(f_N-\tilde{f}_{N})=0>(f_{N_*}-\tilde{f}_{N_*})\}}B(|v-v_*|,\sigma)[f_{N_*}-\tilde{f}_{N_*}]\tilde{f}_{N}\\\nonumber
&&\times[\vartheta(v'_*)+\vartheta(v')-\vartheta(v_*)-\vartheta(v)]d\sigma dv_*dv\\\nonumber
&\leq&C\int_{\mathbb{R}^6\times\mathbb{S}^2}B(|v-v_*|,\sigma)|f_{N_*}-\tilde{f}_{N_*}|\tilde{f}_{N}(1+|v|)^2d\sigma dv_*dv\\\nonumber
&\leq&C\|f_N-\tilde{f}_{N}\|_{L^1_2}.
\end{eqnarray}
Therefore $(\ref{ThmConvergenceEq3})$,  $(\ref{ThmConvergenceEq3a})$,  $(\ref{ThmConvergenceEq3b})$,  $(\ref{ThmConvergenceEq3c})$,  $(\ref{ThmConvergenceEq3d})$     imply
\begin{eqnarray}\label{ThmConvergenceEq4}
& &\frac{d}{dt}\|f_N-\tilde{f}_{N}\|_{L^1_2}\\\nonumber
&\leq &C\|f_N-\tilde{f}_{N}\|_{L^1_2}\\\nonumber
& &+\frac{1}{2}\int_{\mathbb{R}^6\times\mathbb{S}^2}B(|v-v_*|,\sigma)[\tilde{f}_{N_*}\tilde{f}_{N}-f_{*}f][\vartheta(v'_*)+\vartheta(v')-\vartheta(v_*)-\vartheta(v)]d\sigma dv_*dv.,\end{eqnarray}
where $C$ is a constant varying from lines to lines.
\\ Apply Gronwall's inequality to $(\ref{ThmConvergenceEq4})$, we get
\begin{eqnarray}\label{ThmConvergenceEq5}
& &\|f_N(T)-\tilde{f}_{N}(T)\|_{L^1_2}\\\nonumber
& \leq &\int_0^T\int_{\mathbb{R}^6\times\mathbb{S}^2} \frac{e^{C(T-t)}}{2}B(|v-v_*|,\sigma)[\tilde{f}_{N_*}\tilde{f}_{N}-f_{*}f]\times\\\nonumber
& &[\vartheta(v'_*)+\vartheta(v')-\vartheta(v_*)-\vartheta(v)]d\sigma dv_*dvdt+e^{CT}\|f_N(0)-\tilde{f}_{N}(0)\|_{L^1_2}.
\end{eqnarray}
 Inequality $(\ref{ThmConvergenceEq5})$  implies that the accuracy of the method is indeed the accuracy of the orthogonal projection onto the subspaces created by the wavelets, in other words the method has a spectral accuracy.
\end{proof}
\section{Convergence to equilibrium}
In \cite{Mouhot:2006:RCE}, Mouhot proved that the solution of the Boltzmann equation converges to the equilibrium $\mathcal{M}$ with the rate $O(\exp(-ct))$.  Suppose that the solution $f$ of the homogeneous equation $(\ref{BoltzmannHomogeneous})$ converges to the equilibrium $\mathcal{M}$ with the velocity $O(\exp(-ct))$
$$\|f(t)-\mathcal{M}\|_{L^2}=O(\exp(-ct)).$$
Define
$$\mathcal{E}(t)=\|f(t)-\mathcal{M}\|_{L^2}=O(\exp(-ct)).$$
Fix a time $t$, according to theorem $\ref{ThmConvergence}$, there exists $N_0$ such that  
$$\|f_N(t)-f(t)\|_{L^2}\leq \frac{\mathcal{E}(t)}{2}, \forall N>N_0,$$
which means that 
\begin{equation}\label{ThmConvergencetoEquilibriumEq1}
\|f_N(t)-\mathcal{M}\|_{L^2}\leq \frac{3\mathcal{E}(t)}{2}, \forall N>N_0.
\end{equation}
Therefore by the same argument of $\cite{FilbetMouhot:2011:ASM}$, the distance between $f_N(t)$ and $\mathcal{M}$ is  $O(\exp(-ct))$. We have:
\begin{theorem}\label{ThmConvergencetoEquilibrium}Suppose that the solution $f$ of the homogeneous equation $(\ref{BoltzmannHomogeneous})$ converges to the equilibrium $\mathcal{M}$ with the velocity $O(\exp(-ct))$. Then   $f_N(t)$ converges to the equilibrium $\mathcal{M}$ with the velocity $O(\exp(-ct))$, in the sense of $(\ref{ThmConvergencetoEquilibriumEq1})$.\end{theorem}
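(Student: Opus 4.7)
The plan is to observe that Theorem \ref{ThmConvergencetoEquilibrium} is essentially a triangle-inequality consequence of the previously established convergence of the scheme (Theorem \ref{ThmConvergence}) combined with Mouhot's exponential relaxation rate for the exact equation. The paragraph preceding the theorem statement already carries out most of the argument; the task is to assemble it cleanly and address the one subtle point about the norm in which the scheme converges.

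First I would write, for any fixed $t>0$ and any $N$, the triangle inequality
\begin{equation*}
\|f_N(t)-\mathcal{M}\|_{L^2}\;\leq\;\|f_N(t)-f(t)\|_{L^2}\;+\;\|f(t)-\mathcal{M}\|_{L^2}.
\end{equation*}
By the hypothesis (Mouhot \cite{Mouhot:2006:RCE}), the second term equals $\mathcal{E}(t)=O(\exp(-ct))$. For the first term, I appeal to Theorem \ref{ThmConvergence}: for every fixed $t$ we have $\|f_N(t)-f(t)\|\to 0$ as $N\to\infty$. In particular, one can choose $N_0=N_0(t)$ so large that $\|f_N(t)-f(t)\|_{L^2}\leq \mathcal{E}(t)/2$ for all $N>N_0$. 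Combining, one obtains
\begin{equation*}
\|f_N(t)-\mathcal{M}\|_{L^2}\;\leq\;\tfrac{3}{2}\mathcal{E}(t)\;=\;O(\exp(-ct)),\qquad \forall N>N_0(t),
\end{equation*}
which is exactly \eqref{ThmConvergencetoEquilibriumEq1}, and the conclusion of the theorem follows.

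The one step that requires a little care is the use of the $L^2$ norm in the bound $\|f_N(t)-f(t)\|_{L^2}\leq \mathcal{E}(t)/2$: Theorem \ref{ThmConvergence} is phrased in $\mathcal{L}^1_2$ (equivalently $L^1_2$ in the original variable), whereas we need control in $L^2$. The way I would handle this is by interpolation: Theorem \ref{ThmBondh_N} provides a uniform-in-$N$ bound on $\|h_N\|_{\mathcal{L}^2_{-4}}$, hence on $\|f_N\|_{L^2}$, and the exact solution $f$ enjoys a corresponding $L^2$ bound (with suitable weight) by the classical regularity theory of the homogeneous Boltzmann equation. Since $\{f_N(t)-f(t)\}$ is uniformly bounded in a weighted $L^2$ space and converges to zero in $L^1$, standard interpolation between $L^1$ and the weighted $L^2$ yields convergence to zero in (unweighted) $L^2$, which is all that is required to pick the threshold $N_0(t)$.

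The bulk of the work is therefore already done; this theorem simply repackages the combined content of Theorem \ref{ThmConvergence}, Theorem \ref{ThmBondh_N} and Mouhot's exponential decay estimate, mirroring the reasoning used in \cite{FilbetMouhot:2011:ASM} for the periodized Fourier scheme. The mildest obstacle is the norm-conversion step just described; no further structural difficulty is anticipated.
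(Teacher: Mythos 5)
Your approach coincides with the paper's: the proof is indeed the triangle inequality $\|f_N(t)-\mathcal{M}\|_{L^2}\leq\|f_N(t)-f(t)\|_{L^2}+\|f(t)-\mathcal{M}\|_{L^2}$ together with Theorem \ref{ThmConvergence} to make the first term smaller than $\mathcal{E}(t)/2$ for $N>N_0(t)$. You also correctly identify the one real subtlety: Theorem \ref{ThmConvergence} is stated in $\mathcal{L}^1_2$ (equivalently $L^1_2$ for $f_N$), not in $L^2$, and the paper simply asserts $\|f_N(t)-f(t)\|_{L^2}\leq\mathcal{E}(t)/2$ without addressing the change of norm. In this respect you are more careful than the text itself.

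However, the interpolation argument you propose to bridge the gap does not close. Convergence to zero in $L^1$ together with a \emph{uniform bound} (weighted or not) in $L^2$ only gives strong convergence in $L^r$ for $r<2$; to reach $L^2$ itself by interpolation against $L^1$, one must have a uniform bound in some $L^q$ with $q>2$. A standard counterexample makes this sharp: take $g_N=N^{1/2}\chi_{(0,1/N)}$, which has $\|g_N\|_{L^1}\to 0$, $\|g_N\|_{L^2}\equiv 1$ (and $\|g_N\|_{L^2((1+|v|)^{2s})}$ bounded for any $s$), yet $\|g_N\|_{L^2}\not\to 0$. Theorem \ref{ThmBondh_N} and Remark \ref{RemarkLs1Convergence} only furnish $L^2$-type bounds with polynomial weights, never $L^q$ for $q>2$, so the interpolation as you set it up cannot yield the needed $L^2$ convergence. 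Closing the gap honestly would require either (i) an $L^q$ bound with $q>2$ (e.g.\ via a propagation-of-regularity argument), (ii) an $L^2$ energy estimate on $f_N-f$ of the same type as the $L^1_2$ estimate in the proof of Theorem \ref{ThmConvergence}, or (iii) recasting $(\ref{ThmConvergencetoEquilibriumEq1})$ in the $L^1_2$ topology in which the convergence of the scheme is actually established. The paper does none of these, so the difficulty you flagged is genuine, but your proposed repair does not resolve it.
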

\section{Propagation of polynomial moments}\label{secpolynomialmonemts}
In \cite{Desvillettes:1993:SAM}, \cite{Wennberg:1997:EDM}, it is proved that the solution $f$ of $(\ref{BoltzmannHomogeneous})$ satisfies the following property 
$$\forall s>0,\forall t_0>0,\sup_{t\geq t_0}\int_{\mathbb{R}^3}f(t,v)(1+|v|^s)dv<+\infty,$$
or equivalently
$$\forall s>0,\forall t_0>0,\sup_{t\geq t_0}\int_{(-1,1)^3}h(t,\bar{v})(1-|\bar{v}|)^{-s}d\bar{v}<+\infty.$$
We will establish some conditions on the filter $F_N$ such that the above property is satisfied with the solution $h_N$ of the approximate problem $(\ref{SpectralMethod2})$. The idea of constructing $F_N$ is, again, to remove some components of the wavelet representation which are close to the extreme points of $(-1,1)^3$, or in other words, to restrict $h_N$ onto $[-\zeta_N,\zeta_N]^3$ with $0<\zeta_N<1$.
\subsection{Assumption}\label{SubsecAssumptionPolynomial}
First, we establish some properties on the filter $F_N$.
\begin{assumption}\label{AssumptionPolynomialMoments} Let $n$ be a positive integer, we suppose the following assumption on the multiresolution analysis and the filter $F_N$: There exists a constant $\epsilon(N)$ such that 
$$\lim_{N\to\infty}\epsilon(N)=0,$$
and
$$\|\mathcal{P}_N^C(\eta^{-n}(\bar{v}))\|_{L^\infty([-\zeta_N,\zeta_N]^3)}<\epsilon(N),$$
where  
$$\mathcal{P}_N^C(\eta^{-n}(\bar{v})):=\eta^{-n}(\bar{v})-\mathcal{P}_N(\eta^{-n}(\bar{v})).$$
\end{assumption}
We now point out an example which satisfies this assumption. Consider again the Haar function in  $(\ref{HaarFunction})$, $(\ref{HaarPeriodizedBasis})$, $(\ref{HaarPeriodizedBasisRearrange})$ and $(\ref{HaarFilter})$. According to the definition of the filter $F_N$, the approximate function $\mathcal{P}_N(\eta^{-n}(\bar{v}))$ is supported in $[-2^{-N}(2\hat{k}_N+1),2^{-N}(2\hat{k}_N+1)]^3$. 
\begin{proposition}\label{PropoHaarPolynomialMoments} Let $\Delta$ be some constant in $(0,1)$ and suppose that $$\hat{k}_N=\left[\frac{\Delta 2^N-1}{2}\right],$$
where $[\frac{\Delta 2^N-1}{2}]$ denotes the largest integer smaller than $\frac{\Delta 2^N-1}{2}.$\\
There exists a constant $\epsilon(N)$ such that 
$$\lim_{N\to\infty}\epsilon(N)=0,$$
and
$$\|\mathcal{P}_N^C(\eta^{-n}(\bar{v}))\|_{L^\infty([-2^{-N}(2\hat{k}_N+1),2^{-N}(2\hat{k}_N+1)]^3)}<\epsilon(N).$$
\end{proposition}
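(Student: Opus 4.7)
The plan is straightforward once we identify the Haar projection as local averaging on small cubes and exploit the Lipschitz regularity of $\eta^{-n}$ away from the boundary of $(-1,1)^3$.

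First I would pin down the geometry. With the choice $\hat{k}_N = [(\Delta 2^N - 1)/2]$ we have $2\hat{k}_N + 1 \leq \Delta 2^N$, hence
$$[-2^{-N}(2\hat{k}_N+1), 2^{-N}(2\hat{k}_N+1)]^3 \subset [-\Delta,\Delta]^3.$$
Since $\Delta < 1$, the function $F(s) = (1 + s^2/(1-s)^2)^n$ is $C^\infty$ on $[0,\Delta]$, so $\eta^{-n}(\bar{v}) = F(|\bar{v}|)$ with $|\bar v| = \max\{|\bar v_1|,|\bar v_2|,|\bar v_3|\}$ is bounded and globally Lipschitz on $[-\Delta,\Delta]^3$ with a constant $L = L(\Delta,n)$ depending only on $\Delta$ and $n$ (because $|\bar v|$ is $1$-Lipschitz in each variable).

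Next I would unpack $\mathcal{P}_N$ on this restricted region. From the rearranged basis \ref{HaarPeriodizedBasisRearrange}, each $\Phi_{N,k}$ with $|k_1|,|k_2|,|k_3| \leq \hat{k}_N < 2^{N-1}$ is (up to a normalizing constant) the characteristic function of a cube $C_k$ of side $2^{1-N}$ contained in $[-\Delta,\Delta]^3$, and these cubes are pairwise disjoint and tile the filtered support. Because the filter only removes the outer components (those with some $|k_i|>\hat k_N$, in particular those containing $\pm 1$), on each inner cube $C_k$ the filtered projection coincides with the standard Haar orthogonal projection, so
$$\mathcal{P}_N(\eta^{-n})(\bar{v}) = \frac{1}{|C_k|}\int_{C_k}\eta^{-n}(\bar{u})\,d\bar{u} \qquad (\bar{v}\in C_k).$$

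Then the estimate is immediate: on each such cube,
$$|\eta^{-n}(\bar{v}) - \mathcal{P}_N(\eta^{-n})(\bar{v})| \leq \sup_{\bar{u}\in C_k}|\eta^{-n}(\bar{v}) - \eta^{-n}(\bar{u})| \leq L\,\mathrm{diam}(C_k) \leq L\sqrt{3}\,2^{1-N}.$$
Setting $\epsilon(N) := 2L\sqrt{3}\,2^{-N}$ gives $\lim_{N\to\infty}\epsilon(N) = 0$ and the claimed $L^\infty$ bound on the filtered support.

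The only mildly subtle points are (i) verifying that the filter truly leaves $\mathcal{P}_N$ acting as the plain Haar averaging inside the region $[-2^{-N}(2\hat{k}_N+1),2^{-N}(2\hat{k}_N+1)]^3$ (which follows from the choice $\hat k_N < 2^{N-1}$, so the problematic boundary wavelet $\phi^{per}_{-N, 2^{N-1}}$ is discarded), and (ii) controlling the Lipschitz constant of $\eta^{-n}$ uniformly in $N$, which is where the condition $\Delta<1$ is essential since $L(\Delta,n) \to \infty$ as $\Delta \to 1$. Neither of these presents a genuine obstacle, so I do not expect a hard step in this proof; the role of the proposition is really to translate the smoothness of $\eta^{-n}$ on compact subsets of $(-1,1)^3$ into a quantitative approximation rate for the Haar projection.
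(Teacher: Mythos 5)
Your argument is correct and follows the same essential route as the paper: restrict to $[-\Delta,\Delta]^3$, use that the filtered Haar projection reduces to local averaging on cubes of side $2^{1-N}$, and conclude via the regularity of $\eta^{-n}$ on this compact subset of $(-1,1)^3$. The paper carries out the same idea but more explicitly — it factors $a^{2n}-b^{2n}=(a-b)\sum_i a^i b^{2n-1-i}$ for $a,b$ the values of $|\bar v|/(1-|\bar v|)$ at the two ends of the relevant dyadic interval, and uses the monotonicity of $y\mapsto y^j/(1-y)^{j+1}$ to land on the explicit bound $\epsilon(N)=2n\,\Delta^{2n-1}|1-\Delta|^{-(2n+1)}\,2^{1-N}$; you instead wrap the whole computation into a Lipschitz constant $L(\Delta,n)$, which is shorter and equally valid (and your remark that the max-norm diameter $2^{1-N}$ could replace $\sqrt{3}\,2^{1-N}$ would make your constant tighter, since $\eta^{-n}$ depends only on $|\bar v|=\max_i|\bar v_i|$). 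One small side note: the paper's written proof also treats a case labelled ``$k_1=2^{N-1}$'' with $|\bar v|\in[0,2^{-N}]$, which under the choice $\hat k_N<2^{N-1}$ never actually occurs in the filtered sum; your proof sidesteps this entirely, which is cleaner.
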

\begin{remark} This technique of wavelets filtering is quite similar to the Fourier filtering technique introduced in \cite{Zuazua:2005:POC}, \cite{Zuazua:CNA:2006}, \cite{MaricaZuazua:2010:LSF}, \cite{MaricaZuazua:2010:LSS}. In order to preserve the propagation, observation and control of waves, Zuazua introduced a new Fourier filter: Suppose that the solution $u$ defined on $(0,1)$ could be written under the form of Fourier series
$$u(x)=\sum_{-\infty}^\infty a_m\exp(-2\pi m i),$$
and its approximation is 
$$u_N(x)=\sum_{-N}^N a_m\exp(-2\pi mi).$$
 Zuazua's Fourier filter is defined by removing all of the indices $m$ such that $|m|>[\Delta(N+1)]$ where $\Delta$ is a constant in $(0,1)$
$$F_Nu_N(x)=\sum_{-[\Delta(N+1)]}^{[\Delta(N+1)]} a_m\exp(-2\pi m i).$$
\end{remark}
\begin{proof}
Set
$$\mathcal{P}_N\left[\left(\frac{|\bar{v}|^2}{(1-|\bar{v}|)^2}\right)^n\right]=\sum_{k=-\hat{k}_N}^{\hat{k}_N}d_k\Phi_{N,k},$$
where
$$d_k=\int_{(-1,1)^3}\left(\frac{|\bar{v}|^2}{(1-|\bar{v}|)^2}\right)^n\Phi_{N,k}d\bar{v}.$$
Suppose that 
$$\Phi_{N,k}(\bar{v})={\phi}^{per}_{-N,k_1}(\bar{v}_1){\phi}^{per}_{-N,k_2}(\bar{v}_2){\phi}^{per}_{-N,k_3}(\bar{v}_3),$$
with $|k_1|\geq |k_2|\geq |k_3|$. Hence, $|\bar{v}|=\max\{|\bar{v}_1|,|\bar{v}_2|,|\bar{v}_3|\}\in[2^{-N}(2|k_1|-1),2^{-N}(2|k_1|+1)]$ if $k_1\ne2^{N-1}$ and $|\bar{v}|\in[0,2^{-N}]$ if $k_1=2^{N-1}$.  
\\ If $k_1\ne2^{N-1}$ and $|\bar{v}|=\max\{|\bar{v}_1|,|\bar{v}_2|,|\bar{v}_3|\}\in[2^{-N}(2|k_1|-1),2^{-N}(2|k_1|+1)]$. 
\begin{eqnarray}\label{PropoHaarPolynomialMomentsEq1}
 & &\left|\mathcal{P}_N^C\left[\left(\frac{|\bar{v}|^2}{(1-|\bar{v}|)^2}\right)^n\right]\right|\\\nonumber
 &=&\left|\left(\frac{|\bar{v}|^2}{(1-|\bar{v}|)^2}\right)^n-d_k\Phi_{N,k}\right|\\\nonumber
 & \leq& \left|\left|\frac{2^{-N}(2|k_1|-1)}{1-2^{-N}(2|k_1|-1)}\right|^{2n}-\left|\frac{2^{-N}(2|k_1|+1)}{1-2^{-N}(2|k_1|+1)}\right|^{2n}\right|\\\nonumber
 & \leq& \left|\frac{2^{-N}(2|k_1|-1)}{1-2^{-N}(2|k_1|-1)}-\frac{2^{-N}(2|k_1|+1)}{1-2^{-N}(2|k_1|+1)}\right|\\\nonumber
 & &\times\sum_{i=0}^{2n-1}\left|\frac{2^{-N}(2|k_1|-1)}{1-2^{-N}(2|k_1|-1)}\right|^{i}\left|\frac{2^{-N}(2|k_1|+1)}{1-2^{-N}(2|k_1|+1)}\right|^{2n-1-i}\\\nonumber
 & \leq& \frac{{2^{1-N}}}{(1-2^{-N}(2|k_1|-1))(1-2^{-N}(2|k_1|+1))}\\\nonumber
 & &\times\sum_{i=0}^{2n-1}\left|\frac{2^{-N}(2|k_1|-1)}{1-2^{-N}(2|k_1|-1)}\right|^{i}\left|\frac{2^{-N}(2|k_1|+1)}{1-2^{-N}(2|k_1|+1)}\right|^{2n-1-i}\\\nonumber
 & \leq& 2^{1-N}\sum_{i=0}^{2n-1}\frac{|2^{-N}(2|k_1|-1)|^i}{|1-2^{-N}(2|k_1|-1)|^{i+1}}\frac{|2^{-N}(2|k_1|+1)|^{2n-1-i}}{|1-2^{-N}(2|k_1|+1)|^{2n-i}}.
\end{eqnarray}
Now consider the function 
$$\varrho(y)=\frac{y^{j}}{(1-y)^{j+1}}, ~~~y\in(0,1),$$
for any positive integer $j$. Since
$$\varrho'(y)=\frac{jy^{j-1}(1-y)^{j+1}+(j+1)y^{j}(1-y)^{j}}{(1-y)^{2j+2}}>0,$$
the function $\varrho$ is increasing. Apply this into $(\ref{PropoHaarPolynomialMomentsEq1})$, we get
\begin{eqnarray}\label{PropoHaarPolynomialMomentsEq2}\nonumber
 & &\left|\mathcal{P}_N^C\left[\left(\frac{|\bar{v}|^2}{(1-|\bar{v}|)^2}\right)^n\right]\right|\\\nonumber
 & \leq& 2^{1-N}\sum_{i=0}^{2n-1}\frac{|2^{-N}(2\hat{k}_N+1)|^i}{|1-2^{-N}(2\hat{k}_N+1)|^{i+1}}\frac{|2^{-N}(2\hat{k}_N+1)|^{2n-1-i}}{|1-2^{-N}(2\hat{k}_N+1)|^{2n-i}}\\
 & \leq& 2^{1-N}2n\frac{|2^{-N}(2\hat{k}_N+1)|^{2n-1}}{|1-2^{-N}(2\hat{k}_N+1)|^{2n+1}}\\\nonumber
  & \leq& 2^{1-N}2n\frac{\Delta^{2n-1}}{|1-\Delta|^{2n+1}}.
\end{eqnarray}
If $k_1=2^{N-1}$ and $|\bar{v}|\in[0,2^{-N}]$.
\begin{eqnarray}\label{PropoHaarPolynomialMomentsEq3}\nonumber
 & &\left|\mathcal{P}_N^C\left[\left(\frac{|\bar{v}|^2}{(1-|\bar{v}|)^2}\right)^n\right]\right|\\
 &=&\left|\left(\frac{|\bar{v}|^2}{(1-|\bar{v}|)^2}\right)^n-\mathcal{P}_N\left[\left(\frac{|\bar{v}|^2}{(1-|\bar{v}|)^2}\right)^n\right]\right|\\\nonumber
 & \leq& \left|\frac{2^{-N}}{1-2^{-N}}\right|^{2n}.
\end{eqnarray}
The conclusion of the proposition follows from $(\ref{PropoHaarPolynomialMomentsEq2})$ and $(\ref{PropoHaarPolynomialMomentsEq3})$.
\end{proof}
\subsection{Propagation of polynomial moments}
\begin{theorem}\label{ThmPropaPolyMoments} Assuming assumptions $\ref{Assumption1}$, $\ref{Assumption2}$,  $\ref{AssumptionPolynomialMoments}$ on the multiresolution analysis and the filter, then we get the following propagation of polynomial moments property
\begin{equation}\label{ThmPropaPolyMomentsEq0}\forall s>0,\forall t_0>0,\exists N_0, \mbox{ such that }\sup_{t\geq t_0,N>N_0}\int_{(-1,1)^3}h_N(t,\bar{v})(1-|\bar{v}|)^{-s}d\bar{v}<+\infty.
\end{equation}
If $$\int_{(-1,1)^3}h_0(t,\bar{v})(1-|\bar{v}|)^{-s}d\bar{v}<+\infty,$$
then $t_0$ could be chosen to be $0$.
\end{theorem}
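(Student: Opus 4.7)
The plan is to move the projection from the collision operator onto the weight function, and then run a Povzner-type argument on the resulting symmetric four-point expression in the original velocity variable. Setting
$$M_s^N(t) := \int_{(-1,1)^3} h_N(t,\bar v)(1-|\bar v|)^{-s}\, d\bar v = \int_{\mathbb{R}^3} f_N(t,v)(1+|v|)^s\, dv,$$
with $f_N(v)=h_N(\varphi(v))(1+|v|)^{-4}$, I would test the spectral equation $(\ref{SpectralMethod2})$ against $(1-|\bar v|)^{-s}$. Since $\mathbb{P}_N=\eta\mathcal{P}_N\eta^{-1}$ and $\mathcal{P}_N=F_NP_N$ is the composition of two self-adjoint orthogonal projections, hence self-adjoint in $L^2$, this yields
$$\frac{d}{dt} M_s^N = \int_{(-1,1)^3} \mathcal{P}_N\!\bigl[(1-|\bar v|)^{-s}\eta\bigr]\,\eta^{-1}\,\mathcal{R}(h_N,h_N)\, d\bar v,$$
where $\mathcal{R}$ denotes the unprojected integrand inside $\mathbb{P}_N$. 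Using Assumption \ref{AssumptionPolynomialMoments} with $n=\lceil s/2\rceil$, since $(1-|\bar v|)^{-s}\eta=(1+|v|)^s(1+|v|^2)^{-1}$ is dominated by $\eta^{-n}$, I would replace $\mathcal{P}_N[(1-|\bar v|)^{-s}\eta]$ by $(1-|\bar v|)^{-s}\eta$ modulo an $L^\infty([-\zeta_N,\zeta_N]^3)$-remainder of size $\epsilon(N)$.

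Changing variables $\bar v\to v$, symmetrizing the resulting quadratic form and invoking the classical Povzner inequality of Desvillettes--Wennberg--Mischler on the main piece
$$\frac12\!\int_{\mathbb R^6\times\mathbb S^2}\! B(|v-v_*|,\sigma) f_N f_{N*}\!\bigl[(1+|v'|)^s+(1+|v'_*|)^s-(1+|v|)^s-(1+|v_*|)^s\bigr] d\sigma\, dv_*\, dv$$
produces a coercive contribution $-K_s M^N_{s+\gamma}$ plus a sum of mixed lower-order products $\sum_{k+\ell\leq s,\,k,\ell<s} M_k^N M_\ell^N$. Combined with the $\epsilon(N)$-error, this gives the differential inequality
$$\frac{d}{dt} M_s^N \leq -K_s M^N_{s+\gamma} + C_s \!\!\sum_{k+\ell\leq s,\,k,\ell<s}\!\! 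M_k^N M_\ell^N + C\,\epsilon(N)\,\bigl(1+M^N_{s+\gamma}\bigr).$$
For $N$ large enough that $C\epsilon(N)<K_s/2$, the $M^N_{s+\gamma}$-error is absorbed on the left; then the standard interpolation $M^N_{s+\gamma}\geq c(M^N_s)^{1+\gamma/s}(M^N_0)^{-\gamma/s}$ together with the uniform $\mathcal{L}^1_2$-bound of Theorem \ref{ThmBondh_N} yields, via the Povzner--Desvillettes ODI argument, a uniform-in-$t\geq t_0$ attractor for $M^N_s$, and a uniform-in-$t\geq 0$ bound whenever $M^N_s(0)$ is finite. Induction on $s$ starting from the base case $s=2$ handles the mixed lower-order terms.

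The main obstacle is controlling the $\epsilon(N)$-error. Once the projection is transferred onto the weight, the residual $\mathcal{P}_N^C[(1-|\bar v|)^{-s}\eta]$ couples the gain and loss parts of $\mathcal{R}$ in a non-symmetric way, so the Povzner cancellation fails for it directly. The plan is to estimate this contribution separately on the gain and loss sides by splitting the velocity integral and applying the uniform $\mathcal{L}^1_2$ and $\mathcal{L}^2_{-4}$ bounds established in Theorem \ref{ThmBondh_N} and Proposition \ref{PropL2EstimateofhLambdaN}, together with the coercivity built into Assumption \ref{Assumption2}; the resulting estimate is precisely $C\epsilon(N)(1+M^N_{s+\gamma})$, which for $N$ sufficiently large is absorbed by $K_sM^N_{s+\gamma}$, closing the argument.
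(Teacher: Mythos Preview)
Your overall strategy---transfer the projection onto the weight, symmetrize, apply Povzner, and close via an ODI with the $\epsilon(N)$-error absorbed into the coercive term---is exactly the paper's. But your choice of test function creates a genuine gap in the use of Assumption~\ref{AssumptionPolynomialMoments}.

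That assumption bounds $\mathcal{P}_N^C[\eta^{-n}]$ in $L^\infty([-\zeta_N,\zeta_N]^3)$ for positive integers $n$, and only for functions of that exact form. Your transferred weight is $(1-|\bar v|)^{-s}\eta = (1+|v|)^s(1+|v|^2)^{-1}$, which is not $\eta^{-n}$ for any $n$. The claim that the pointwise domination $(1-|\bar v|)^{-s}\eta\le C\eta^{-n}$ suffices is incorrect: although $\mathcal{P}_N$ is positive, $\mathcal{P}_N^C=I-\mathcal{P}_N$ is not, and $0\le g\le\eta^{-n}$ together with $|\mathcal{P}_N^C\eta^{-n}|\le\epsilon(N)$ only yields $|g-\mathcal{P}_Ng|\le\eta^{-n}+\epsilon(N)$, which is useless. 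The paper avoids this by testing with $\eta^{-s}$ for \emph{integer} $s$ rather than with $(1-|\bar v|)^{-s}$; the transferred weight is then exactly $\eta^{1-s}=\eta^{-(s-1)}$, so Assumption~\ref{AssumptionPolynomialMoments} applies verbatim with $n=s-1$. Non-integer orders are recovered by interpolation at the end.

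Two further points where the paper is simpler than your sketch. First, once $|\mathcal{P}_N^C[\eta^{1-s}]|\le\epsilon(N)$ on the filtered support, the surviving factor in the error is just $\eta^{-1}=1+|v|^2$, so the remainder is $\epsilon(N)\int B f_{N*}f_N|v|^2\,d\sigma\,dv_*dv$, a \emph{lower-order} moment rather than $M^N_{s+\gamma}$; neither the $\mathcal{L}^2_{-4}$ bound nor Assumption~\ref{Assumption2} is invoked. The contribution from $\bar v\notin(-\zeta_N,\zeta_N)^3$ is handled by noting that there $\mathcal{P}_N^C[\eta^{1-s}]=\eta^{1-s}\ge 0$ multiplies the nonnegative gain term with a minus sign (hence can be dropped), while the loss term vanishes there because $h_N$ is supported inside. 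Second, the Povzner gain is bounded directly by $C\bigl(1+\int f_N|v|^{2s}\bigr)$ using only the uniform $L^1_2$ control, so no induction on $s$ is needed; each moment is handled in one shot.
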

\begin{remark}\label{RemarkLs1Convergence} Using theorem $\ref{ThmPropaPolyMoments}$, by the same argument as theorem $\ref{ThmBondh_N}$, we can have the following property 
\begin{equation}\label{ThmPropaPolyMomentsEq00}\forall s>0,\forall t_0>0,\exists N_0, \mbox{ s.t. }\sup_{t\geq t_0,N>N_0}\int_{(-1,1)^3}|h_N(t,\bar{v})|^2(1-|\bar{v}|)^{-s+4}d\bar{v}<+\infty.
\end{equation}
Moreover, by theorem $\ref{ThmPropaPolyMoments}$, we can expect that 
$$\sup_{t\in[0,T]}\lim_{N\to\infty}\|h_N(t)-h(t)\|_{\mathcal{L}_s^1}=0, \forall T\in\mathbb{R},$$
if $h_0\in\mathcal{L}_s^1$.
\end{remark}
\begin{proof} We only prove the theorem for integer values of $s$, $s>1$, the non-integer cases could be deduced directly from the integer cases by classical interpolation arguments. First, we observe that $f_{0_N}$ are uniformly bounded with respect to $N$ in ${L}^1_s$ if $f_0$ belongs to ${L}^1_s$. We prove the theorem in two steps.
{\\\bf Step 1:} Transforming $(\ref{SpectralMethod1})$.
\\We recall $(\ref{SpectralMethod1})$
\begin{eqnarray}\label{ThmPropaPolyMomentsEq1}\nonumber
&&\partial_t {h}_N(t,\bar{v})\\\nonumber
&=&\eta\mathcal{P}_N\left\{\int_{(-1,1)^3}\int_{\mathbb{S}^2}\mathcal{B}(\bar{v},\bar{v}_*,\sigma)
\eta^{-1}\right.\\
&&\times\left[\mathcal{C}(\bar{v},\bar{v}_*,\sigma){h}_N\left(\varphi\left(\frac{\varphi^{-1}(\bar{v})+\varphi^{-1}(\bar{v}_*)}{2}-\sigma\frac{|\varphi^{-1}(\bar{v})-\varphi^{-1}(\bar{v}_*)|}{2}\right)\right)\right.\\\nonumber
&&\left.\left.\times {h}_N\left(\varphi\left(\frac{\varphi^{-1}(\bar{v})+\varphi^{-1}(\bar{v}_*)}{2}+\sigma\frac{|\varphi^{-1}(\bar{v})-\varphi^{-1}(\bar{v}_*)|}{2}\right)\right)-{h}_N(\bar{v}){h}_N(\bar{v}_*)\right]d\sigma d\bar{v}_*\right\},
\end{eqnarray}
and use $\eta^{-s}$ $(s\in\mathbb{N})$ as a test function for $(\ref{ThmPropaPolyMomentsEq1})$ to obtain
\begin{eqnarray}\label{ThmPropaPolyMomentsEq2}\nonumber
&&\int_{(-1,1)^3}\partial_t {h}_N(t,\bar{v})\eta^{-s}d\bar{v}\\\nonumber
&=&\int_{(-1,1)^3}\eta^{1-s}\mathcal{P}_N\left\{\int_{(-1,1)^3}\int_{\mathbb{S}^2}\mathcal{B}(\bar{v},\bar{v}_*,\sigma)
\eta^{-1}\right.\\\nonumber
&&\times\left[\mathcal{C}(\bar{v},\bar{v}_*,\sigma){h}_N\left(\varphi\left(\frac{\varphi^{-1}(\bar{v})+\varphi^{-1}(\bar{v}_*)}{2}-\sigma\frac{|\varphi^{-1}(\bar{v})-\varphi^{-1}(\bar{v}_*)|}{2}\right)\right)\right.\\\nonumber
&&\left.\left.\times {h}_N\left(\varphi\left(\frac{\varphi^{-1}(\bar{v})+\varphi^{-1}(\bar{v}_*)}{2}+\sigma\frac{|\varphi^{-1}(\bar{v})-\varphi^{-1}(\bar{v}_*)|}{2}\right)\right)-{h}_N(\bar{v}){h}_N(\bar{v}_*)\right]d\sigma d\bar{v}_*\right\}d\bar{v}\\\nonumber
&=&\int_{(-1,1)^3}\mathcal{P}_N[\eta^{1-s}]\left\{\int_{(-1,1)^3}\int_{\mathbb{S}^2}\mathcal{B}(\bar{v},\bar{v}_*,\sigma)
\eta^{-1}\right.\\\nonumber
&&\times\left[\mathcal{C}(\bar{v},\bar{v}_*,\sigma){h}_N\left(\varphi\left(\frac{\varphi^{-1}(\bar{v})+\varphi^{-1}(\bar{v}_*)}{2}-\sigma\frac{|\varphi^{-1}(\bar{v})-\varphi^{-1}(\bar{v}_*)|}{2}\right)\right)\right.\\\nonumber
&&\left.\left.\times {h}_N\left(\varphi\left(\frac{\varphi^{-1}(\bar{v})+\varphi^{-1}(\bar{v}_*)}{2}+\sigma\frac{|\varphi^{-1}(\bar{v})-\varphi^{-1}(\bar{v}_*)|}{2}\right)\right)-{h}_N(\bar{v}){h}_N(\bar{v}_*)\right]d\sigma d\bar{v}_*\right\}d\bar{v}\\\nonumber
&=&\int_{(-1,1)^6\times\mathbb{S}^2}\mathcal{B}(\bar{v},\bar{v}_*,\sigma)
\eta^{-s}\\\nonumber
&&\times\left[\mathcal{C}(\bar{v},\bar{v}_*,\sigma){h}_N\left(\varphi\left(\frac{\varphi^{-1}(\bar{v})+\varphi^{-1}(\bar{v}_*)}{2}-\sigma\frac{|\varphi^{-1}(\bar{v})-\varphi^{-1}(\bar{v}_*)|}{2}\right)\right)\right.\\\nonumber
&&\left.\times {h}_N\left(\varphi\left(\frac{\varphi^{-1}(\bar{v})+\varphi^{-1}(\bar{v}_*)}{2}+\sigma\frac{|\varphi^{-1}(\bar{v})-\varphi^{-1}(\bar{v}_*)|}{2}\right)\right)-{h}_N(\bar{v}){h}_N(\bar{v}_*)\right]d\sigma d\bar{v}_*d\bar{v}\\
&&-\int_{(-1,1)^3}\mathcal{P}^C_N[\eta^{1-s}]\left\{\int_{(-1,1)^3}\int_{\mathbb{S}^2}\mathcal{B}(\bar{v},\bar{v}_*,\sigma)
\eta^{-1}\right.\\\nonumber
&&\times\left[\mathcal{C}(\bar{v},\bar{v}_*,\sigma){h}_N\left(\varphi\left(\frac{\varphi^{-1}(\bar{v})+\varphi^{-1}(\bar{v}_*)}{2}-\sigma\frac{|\varphi^{-1}(\bar{v})-\varphi^{-1}(\bar{v}_*)|}{2}\right)\right)\right.\\\nonumber
&&\left.\left.\times {h}_N\left(\varphi\left(\frac{\varphi^{-1}(\bar{v})+\varphi^{-1}(\bar{v}_*)}{2}+\sigma\frac{|\varphi^{-1}(\bar{v})-\varphi^{-1}(\bar{v}_*)|}{2}\right)\right)\right]d\sigma d\bar{v}_*\right\}d\bar{v}\\\nonumber
& &+\int_{(-1,1)^3}\mathcal{P}^C_N[\eta^{1-s}]\left\{\int_{(-1,1)^3}\int_{\mathbb{S}^2}\mathcal{B}(\bar{v},\bar{v}_*,\sigma)
\eta^{-1}{h}_N(\bar{v}){h}_N(\bar{v}_*)d\sigma d\bar{v}_*\right\}d\bar{v}.
\end{eqnarray}
Now, consider the second term on the right hand side of $(\ref{ThmPropaPolyMomentsEq2})$, we have 
\begin{eqnarray}\label{ThmPropaPolyMomentsEq3}\nonumber
&&-\int_{(-1,1)^3}\mathcal{P}^C_N[\eta^{1-s}]\left\{\int_{(-1,1)^3}\int_{\mathbb{S}^2}\mathcal{B}(\bar{v},\bar{v}_*,\sigma)
\eta^{-1}\right.\\\nonumber
&&\times\left[\mathcal{C}(\bar{v},\bar{v}_*,\sigma){h}_N\left(\varphi\left(\frac{\varphi^{-1}(\bar{v})+\varphi^{-1}(\bar{v}_*)}{2}-\sigma\frac{|\varphi^{-1}(\bar{v})-\varphi^{-1}(\bar{v}_*)|}{2}\right)\right)\right.\\\nonumber
&&\left.\left.\times {h}_N\left(\varphi\left(\frac{\varphi^{-1}(\bar{v})+\varphi^{-1}(\bar{v}_*)}{2}+\sigma\frac{|\varphi^{-1}(\bar{v})-\varphi^{-1}(\bar{v}_*)|}{2}\right)\right)\right]d\sigma d\bar{v}_*\right\}d\bar{v}\\\nonumber
&=&-\int_{(-1,1)^3}\mathcal{P}^C_N[\eta^{1-s}]\chi_{(-\zeta_N,\zeta_N)^3}\left\{\int_{(-1,1)^3}\int_{\mathbb{S}^2}\mathcal{B}(\bar{v},\bar{v}_*,\sigma)
\eta^{-1}\right.\\\nonumber
&&\times\left[\mathcal{C}(\bar{v},\bar{v}_*,\sigma){h}_N\left(\varphi\left(\frac{\varphi^{-1}(\bar{v})+\varphi^{-1}(\bar{v}_*)}{2}-\sigma\frac{|\varphi^{-1}(\bar{v})-\varphi^{-1}(\bar{v}_*)|}{2}\right)\right)\right.\\\nonumber
&&\left.\left.\times {h}_N\left(\varphi\left(\frac{\varphi^{-1}(\bar{v})+\varphi^{-1}(\bar{v}_*)}{2}+\sigma\frac{|\varphi^{-1}(\bar{v})-\varphi^{-1}(\bar{v}_*)|}{2}\right)\right)\right]d\sigma d\bar{v}_*\right\}d\bar{v}\\
&&-\int_{(-1,1)^3}\eta^{1-s}\chi_{\mathbb{R}^3\backslash(-\zeta_N,\zeta_N)^3}\left\{\int_{(-1,1)^3}\int_{\mathbb{S}^2}\mathcal{B}(\bar{v},\bar{v}_*,\sigma)
\eta^{-1}\right.\\\nonumber
&&\times\left[\mathcal{C}(\bar{v},\bar{v}_*,\sigma){h}_N\left(\varphi\left(\frac{\varphi^{-1}(\bar{v})+\varphi^{-1}(\bar{v}_*)}{2}-\sigma\frac{|\varphi^{-1}(\bar{v})-\varphi^{-1}(\bar{v}_*)|}{2}\right)\right)\right.\\\nonumber
&&\left.\left.\times {h}_N\left(\varphi\left(\frac{\varphi^{-1}(\bar{v})+\varphi^{-1}(\bar{v}_*)}{2}+\sigma\frac{|\varphi^{-1}(\bar{v})-\varphi^{-1}(\bar{v}_*)|}{2}\right)\right)\right]d\sigma d\bar{v}_*\right\}d\bar{v}\\\nonumber
&\leq&-\int_{(-1,1)^3}\mathcal{P}^C_N[\eta^{1-s}]\chi_{(-\zeta_N,\zeta_N)^3}\left\{\int_{(-1,1)^3}\int_{\mathbb{S}^2}\mathcal{B}(\bar{v},\bar{v}_*,\sigma)
\eta^{-1}\right.\\\nonumber
&&\times\left[\mathcal{C}(\bar{v},\bar{v}_*,\sigma){h}_N\left(\varphi\left(\frac{\varphi^{-1}(\bar{v})+\varphi^{-1}(\bar{v}_*)}{2}-\sigma\frac{|\varphi^{-1}(\bar{v})-\varphi^{-1}(\bar{v}_*)|}{2}\right)\right)\right.\\\nonumber
&&\left.\left.\times {h}_N\left(\varphi\left(\frac{\varphi^{-1}(\bar{v})+\varphi^{-1}(\bar{v}_*)}{2}+\sigma\frac{|\varphi^{-1}(\bar{v})-\varphi^{-1}(\bar{v}_*)|}{2}\right)\right)\right]d\sigma d\bar{v}_*\right\}d\bar{v}\\\nonumber
&\leq&\epsilon(N)\int_{(-1,1)^6\times \mathbb{S}^2}\mathcal{B}(\bar{v},\bar{v}_*,\sigma)
\eta^{-1}\\\nonumber
&&\times\left[\mathcal{C}(\bar{v},\bar{v}_*,\sigma){h}_N\left(\varphi\left(\frac{\varphi^{-1}(\bar{v})+\varphi^{-1}(\bar{v}_*)}{2}-\sigma\frac{|\varphi^{-1}(\bar{v})-\varphi^{-1}(\bar{v}_*)|}{2}\right)\right)\right.\\\nonumber
&&\left.\times {h}_N\left(\varphi\left(\frac{\varphi^{-1}(\bar{v})+\varphi^{-1}(\bar{v}_*)}{2}+\sigma\frac{|\varphi^{-1}(\bar{v})-\varphi^{-1}(\bar{v}_*)|}{2}\right)\right)\right]d\sigma d\bar{v}_*d\bar{v},
\end{eqnarray}
where we use that fact 
$$\mathcal{P}^C_N[\eta^{1-s}]\chi_{\mathbb{R}^3\backslash(-\zeta_N,\zeta_N)^3}=(Id-\mathcal{P}_N)[\eta^{1-s}]\chi_{\mathbb{R}^3\backslash(-\zeta_N,\zeta_N)^3}=\eta^{1-s}\chi_{\mathbb{R}^3\backslash(-\zeta_N,\zeta_N)^3},$$
since $\mathcal{P}_N[\eta^{1-s}]$ is supported in $(-\zeta_N,\zeta_N)^3$, after that assumption $\ref{AssumptionPolynomialMoments}$ is applied to get the final inequality.
\\ According to assumption $\ref{AssumptionPolynomialMoments}$, the third term on the right hand side of $(\ref{ThmPropaPolyMomentsEq2})$ could be bounded in the following way
\begin{eqnarray}\label{ThmPropaPolyMomentsEq4a}\nonumber
& &\int_{(-1,1)^3}\mathcal{P}^C_N[\eta^{1-s}]\left\{\int_{(-1,1)^3}\int_{\mathbb{S}^2}\mathcal{B}(\bar{v},\bar{v}_*,\sigma)
\eta^{-1}{h}_N(\bar{v}){h}_N(\bar{v}_*)d\sigma d\bar{v}_*\right\}d\bar{v}\\
& \leq&\int_{(-1,1)^6\times\mathbb{S}^2}\epsilon(N)\eta^{-1}\mathcal{B}(\bar{v},\bar{v}_*,\sigma)
{h}_N(\bar{v}){h}_N(\bar{v}_*)d\sigma d\bar{v}_*d\bar{v},
\end{eqnarray}
with the notice that since $h_N(\bar{v})$ is supported in $(-\zeta_N,\zeta_N)^3$, we can suppose that $\eta^{1-s}(\bar{v})$ is supported in $(-\zeta_N,\zeta_N)^3$ as well and hence assumption $\ref{AssumptionPolynomialMoments}$ is applicable. 
\\ We use again the change of variables mapping $\varphi$ to define
$$f_N(v)=h_N(\varphi(v))(1+|v|)^{-4}.$$
Inequalities $(\ref{ThmPropaPolyMomentsEq2})$, $(\ref{ThmPropaPolyMomentsEq3})$ and $(\ref{ThmPropaPolyMomentsEq4a})$ lead to
\begin{eqnarray}\label{ThmPropaPolyMomentsEq4}
& &\int_{\mathbb{R}^3}\partial_t f_N|v|^{2s}dv\\\nonumber
& \leq &\int_{\mathbb{R}^6\times\mathbb{S}^2}B(|v-v_*|,\sigma)[f_{N*}'f_N'-f_{N*}f_N]|v|^{2s}d\sigma dv_*dv\\\nonumber
& &+\epsilon(N)\int_{\mathbb{R}^6\times\mathbb{S}^2}B(|v-v_*|,\sigma)[f_{N*}'f_N'+f_{N*}f_N]|v|^{2}d\sigma dv_*dv\\\nonumber
& \leq &\int_{\mathbb{R}^6\times\mathbb{S}^2}B(|v-v_*|,\sigma)[f_{N*}'f_N'-f_{N*}f_N](|v|^{2s}+\epsilon(N)|v|^{2})d\sigma dv_*dv\\\nonumber
& &+2\epsilon(N)\int_{\mathbb{R}^6\times\mathbb{S}^2}B(|v-v_*|,\sigma)f_{N*}f_N|v|^{2}d\sigma dv_*dv\\\nonumber
& \leq &\frac{1}{2}\int_{\mathbb{R}^6\times\mathbb{S}^2}B(|v-v_*|,\sigma)f_{N*}f_N(|v'_*|^{2s}+|v'|^{2s}-|v_*|^{2s}-|v|^{2s})d\sigma dv_*dv\\\nonumber
& &+2\epsilon(N)\int_{\mathbb{R}^6\times\mathbb{S}^2}B(|v-v_*|,\sigma)f_{N*}f_N|v|^{2}d\sigma dv_*dv,
\end{eqnarray}
where the last inequality follows from the usual change of variables $(v,v_*)\to(v',v'_*)$.
{\\\bf Step 2:} Using Povzner's inequality. 
\\ By Povzner's inequality (Theorem 4.1 \cite{Wennberg:1997:EDM}), we get from $(\ref{ThmPropaPolyMomentsEq4})$ that
\begin{eqnarray}\label{ThmPropaPolyMomentsEq5}
\int_{\mathbb{R}^3}\partial_t f_N|v|^{2s}dv& \leq &C\int_{\mathbb{R}^6}f_{N*}f_N|v_*|^{2s-1}|v||v-v_*|^\gamma dv_*dv\\\nonumber
& &-C\int_{\mathbb{R}^6}f_{N*}f_N(|v_*|^{2s}+|v|^{2s})|v-v_*|^\gamma  dv_*dv\\\nonumber
& &+C\epsilon(N)\int_{\mathbb{R}^6}f_{N*}f_N|v|^{2} |v-v_*|^\gamma dv_*dv.
\end{eqnarray}
Since
$$|v_*|^{2s-1}|v||v-v_*|^\gamma\leq |v_*|^{2s-1}|v|(1+|v|+|v_*|)\leq (1+|v_*|^{2s})(1+|v|^2),$$
the first term on the right hand side of $(\ref{ThmPropaPolyMomentsEq5})$ could be bounded by
\begin{equation}\label{ThmPropaPolyMomentsEq6}
C\left(1+\int_{\mathbb{R}^3}f_N|v|^{2s}dv\right).
\end{equation}
We estimate the second term on the right hand side of $(\ref{ThmPropaPolyMomentsEq5})$
\begin{eqnarray}\label{ThmPropaPolyMomentsEq7}\nonumber
&  &\int_{\mathbb{R}^6\times\mathbb{S}^2}f_{N*}f_N|v_*|^{2s}|v-v_*|^\gamma d\sigma dv_*dv\\
& \geq & C\int_{\mathbb{R}^3}f_N|v|^{2s+\gamma}\geq C\left(f_N|v|^{2s}\right)^{\frac{2s+\gamma}{2s}},
\end{eqnarray}
with the notice that the results of lemma $\ref{LemmaMaxwellian1}$ still hold with $\lambda=\infty$.\\
We now estimate the third term on the right hand side of $(\ref{ThmPropaPolyMomentsEq5})$
\begin{eqnarray}\label{ThmPropaPolyMomentsEq8}
C\epsilon(N)\int_{\mathbb{R}^6}f_{N*}f_N|v|^{2} |v-v_*|^\gamma dv_*dv\leq C\epsilon(N)\int_{\mathbb{R}^3}f_N(C_\epsilon+\epsilon |v|^{2s+\gamma})dv,
\end{eqnarray}
where the  inequality follows from the fact that the energy of $f_N$ is uniformly bounded with respect to $N$ and  Young's inequality with a small positive constant $\epsilon$.
Combine $(\ref{ThmPropaPolyMomentsEq5})$, $(\ref{ThmPropaPolyMomentsEq6})$, $(\ref{ThmPropaPolyMomentsEq7})$ and  $(\ref{ThmPropaPolyMomentsEq8})$ with the assumption that $N$ is sufficiently large or $\epsilon(N)$ is sufficiently small
we get 
\begin{eqnarray}\label{ThmPropaPolyMomentsEq9}
\int_{\mathbb{R}^3}\partial_t f_N|v|^{2s}dv\leq  C\left(\int_{\mathbb{R}^3} f_N|v|^{2s}dv+1\right)-C\left(\int_{\mathbb{R}^3} f_N|v|^{2s}dv\right)^{1+\frac{\gamma}{2s}}.
\end{eqnarray}
Define 
$$Y(t)=\int_{\mathbb{R}^3} f_N|v|^{2s}dv,$$
inequality $(\ref{ThmPropaPolyMomentsEq9})$ becomes
$$\frac{d Y}{dt}\leq C(Y+1)-CY^{1+\frac{\gamma}{2s}}.$$
Proceed similarly as the classical case \cite{Wennberg:1997:EDM}, we get the conclusion of the theorem.
\end{proof}
\section{Propagation of exponential moments}\label{secexponentialmonemts}
In \cite{ACGM:2013:NAC} and \cite{GambaPanferovVillani:2009:UMS}, it is proved that the solution of the homogeneous Boltzmann equation is bounded from above by a Maxwellian. Let us recall theorem 2 \cite{ACGM:2013:NAC}.
{\\\bf Theorem 2 \cite{ACGM:2013:NAC}} (Propagation of exponential moments)
{\\\it Let $f$ be an energy-conserving solution to the homogeneous Boltzmann equation $(\ref{BoltzmannHomogeneous})$ on $[0,+\infty)$ with initial data $f_0\in L^1_2$. Assume moreover that the initial data satisfies for some $s\in[\gamma,2]$
\begin{equation}\label{IntialDataExponentialAssumpition}
\int_{\mathbb{R}^3}f_0(v)(a_0|v|^s)dv\leq C_0.
\end{equation}
Then there are some constants $C,a>0$ which depend only on $b,\gamma$ and the initial mass, energy and $a_0$, $C_0$ in $(\ref{IntialDataExponentialAssumpition})$ such that
\begin{equation}\label{GambaUpperMaxwellianBounds}
\int_{\mathbb{R}^3}f(t,v)\exp(a|v|^s)dv<C.
\end{equation}
}
Our task in this section is to preserve this property at the numerical level
\begin{equation}\label{UpperMaxwellianBoundsh_N}
\int_{(-1,1)^3}h_N(t,\bar{v})\exp\left(a\left(\frac{|\bar{v}|}{1-|\bar{v}|}\right)^s\right)d\bar{v}<C,
\end{equation}
or if we use the $f_N$ formulation 
$$f_N(v)=h_N(\varphi(v))(1+|v|)^{-4},$$
we should have
\begin{equation}\label{UpperMaxwellianBoundsh_N}
\int_{\mathbb{R}^3}f_N(t,v)\exp(a|v|^s)dv<C.
\end{equation}
\subsection{Assumption}
Since 
$$\int_{\mathbb{R}^3}f_0\exp(a_0|v|^s)dv\leq C_0,$$
we have 
$$\int_{(-1,1)^3}h_0\exp\left(a\left(\frac{|\bar{v}|}{1-|\bar{v}|}\right)^s\right)d\bar{v}<C_0.$$
Therefore, we need the following property as well for each initial datum of the approximate equation $(\ref{SpectralMethod2})$
$$\int_{(-1,1)^3}\eta \mathcal{P}_N[h_0\eta^{-1}]\exp\left(a\left(\frac{|\bar{v}|}{1-|\bar{v}|}\right)^s\right)d\bar{v}<C,$$
or equivalently
\begin{equation}\label{BeforeAssumptionExponentialMoments}
\int_{(-1,1)^3} h_0\eta^{-1}\mathcal{P}_N\left[\eta\exp\left(a\left(\frac{|\bar{v}|}{1-|\bar{v}|}\right)^s\right)\right]d\bar{v}<C.
\end{equation}
In order to have $(\ref{BeforeAssumptionExponentialMoments})$ we establish some further properties on the multiresolution analysis and the filter $F_N$ (notice that we always assume assumptions  $\ref{Assumption1}$, $\ref{Assumption2}$, $\ref{AssumptionPolynomialMoments}$ are satisfied).
\begin{assumption}\label{AssumptionExponentialMoments} Let $q$, $a$ be positive  constants. We impose the following assumption on the multiresolution analysis and the filter $F_N$: There exist  constants $N_{0}$, $\bar{\mathcal{K}}$, such that
\begin{eqnarray}\label{AssumptionExponentialMomentsEq0}
\forall N>N_0,\mathcal{P}_N[\eta\exp(a\eta^q)]\leq \bar{\mathcal{K}}\eta\exp(a\eta^q).
\end{eqnarray}
\end{assumption}
A consequence of this assumption is that $(\ref{BeforeAssumptionExponentialMoments})$ follows directly from $(\ref{IntialDataExponentialAssumpition})$. We will point out an example which satisfies this assumption. Consider again the Haar function in  $(\ref{HaarFunction})$, $(\ref{HaarPeriodizedBasis})$, $(\ref{HaarPeriodizedBasisRearrange})$ and $(\ref{HaarFilter})$. According to the definition of the filter $F_N$, the approximate function $\mathcal{P}_N[\eta\exp(a\eta^q)]$ is supported in $[-2^{-N}(2\hat{k}_N+1),2^{-N}(2\hat{k}_N+1)]^3$. 
\begin{proposition}\label{PropoHaarExponentialMoments} Let $\Delta$ be some constant in $(0,1)$ and suppose that $$\hat{k}_N=\left[\frac{\Delta 2^N-1}{2}\right],$$
where $[\frac{\Delta 2^N-1}{2}]$ denotes the largest integer smaller than $\frac{\Delta 2^N-1}{2}.$\\
There exist  constants $N_{0}$, $\bar{\mathcal{K}}$, such that
\begin{eqnarray*}
\forall N>N_0,\mathcal{P}_N[\eta\exp(a\eta^q)]\leq \bar{\mathcal{K}}\eta\exp(a\eta^q).
\end{eqnarray*}
\end{proposition}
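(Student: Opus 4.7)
The approach mirrors the proof of Proposition \ref{AssumptionPropo2}: since we use the Haar scaling function, $\mathcal{P}_N$ acts on each dyadic cube $K$ of side $2^{1-N}$ as the averaging operator $\mathcal{P}_N[g]|_K = |K|^{-1}\int_K g\, d\bar{w}$, and the filter eliminates exactly the indices $|k_i| > \hat{k}_N$. The choice $\hat{k}_N = [(\Delta 2^N - 1)/2]$ guarantees $2^{-N}(2\hat{k}_N + 1) \leq \Delta$, so every cube carrying a non-zero component of $\mathcal{P}_N[\eta\exp(a\eta^q)]$ lies entirely inside $[-\Delta,\Delta]^3$, i.e.\ is uniformly bounded away from the singular points $\pm 1$ of $\eta$. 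The strategy is therefore reduced to controlling the local oscillation of $g := \eta\exp(a\eta^q)$ on each such cube and showing that this oscillation factor stays bounded independently of $N$.

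The key estimate is a Lipschitz bound on $\log g$. On $[-\Delta,\Delta]^3$ we have $1-|\bar{v}| \geq 1-\Delta > 0$, hence $\eta(\bar{v}) = (1-|\bar{v}|)^2/((1-|\bar{v}|)^2+|\bar{v}|^2)$ satisfies $\eta_{\min}(\Delta) \leq \eta \leq 1$ for some positive constant $\eta_{\min}(\Delta)$. Computing
$$\nabla \log g = \bigl(1 + aq\,\eta^q\bigr)\,\frac{\nabla\eta}{\eta},$$
we see that $|\nabla\log g|$ is uniformly bounded on $[-\Delta,\Delta]^3$ by a constant $L = L(\Delta,a,q)$. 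Consequently, on any dyadic cube $K$ of diameter $\sqrt{3}\cdot 2^{1-N}$ contained in $[-\Delta,\Delta]^3$, the oscillation of $\log g$ is at most $L\sqrt{3}\cdot 2^{1-N}$, so for $N$ large enough $\max_K g \leq 2\min_K g$. Then for every $\bar{v}\in K$,
$$\mathcal{P}_N[g](\bar{v}) = \frac{1}{|K|}\int_K g\,d\bar{w} \leq \max_K g \leq 2\,\min_K g \leq 2\,g(\bar{v}),$$
establishing \eqref{AssumptionExponentialMomentsEq0} with $\bar{\mathcal{K}} = 2$.

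I do not expect a substantive obstacle here: the statement is essentially a quantitative smoothness property of $\eta\exp(a\eta^q)$ on compact subsets of $(-1,1)^3$, combined with the pointwise locality of Haar averaging. The only point requiring care is the exceptional cube indexed by $k_1 = 2^{N-1}$, which straddles the periodized identification of the endpoints $\pm 1$ and on which $g$ is wild; however, since $\hat{k}_N < 2^{N-1}$ for $N$ large, this cube is precisely the one removed by the filter $F_N$, so only the regular interior cubes contribute and the uniform Lipschitz estimate above applies on each of them.
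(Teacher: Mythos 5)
Your proof is correct and follows the same basic structure as the paper's argument. Both proofs exploit that the Haar projection acts as a local averaging operator on dyadic cubes, and that the choice $\hat{k}_N=[(\Delta 2^N-1)/2]$ confines every surviving cube to $[-\Delta,\Delta]^3$, uniformly away from the endpoints where $\eta$ degenerates. The only substantive difference is in how the local oscillation of $g=\eta\exp(a\eta^q)$ is controlled: the paper bounds the ratio of $\eta^{-1}$ at the two endpoints of each cube (exactly as in the proof of Proposition \ref{AssumptionPropo2}) and then bounds the exponential factor by a constant depending on $\Delta,a,q$, whereas you pass to $\log g$ and use a single uniform Lipschitz estimate, which cleanly combines both factors and additionally yields the explicit value $\bar{\mathcal{K}}=2$ once $N$ is large. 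Your observation that the straddling periodized cube $k_1=2^{N-1}$ is precisely the one eliminated by $F_N$ is also correct and is where the filter is essential. Two minor remarks: first, $|\bar{v}|=\max_i|\bar{v}_i|$ is only Lipschitz, not $C^1$, so the gradient formula $\nabla\log g=(1+aq\eta^q)\nabla\eta/\eta$ should strictly speaking be read as a Lipschitz bound $\mathrm{Lip}(\log g)\leq(1+aq)\,\mathrm{Lip}(\eta)/\eta_{\min}(\Delta)$, which is what the argument actually needs; second, the paper's own proof uses $\exp(a(|\bar{v}|^2/(1-|\bar{v}|)^2)^q)$ rather than the literal $\exp(a\eta^q)$ appearing in the assumption's statement, but since either expression is smooth and bounded on $[-\Delta,\Delta]^3$, your Lipschitz argument applies verbatim to either choice.
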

\begin{proof}
Set
$$\mathcal{P}_N\left[\eta\exp(a\eta^q)\right]=\sum_{k=-\hat{k}_N}^{\hat{k}_N}d_k\Phi_{N,k},$$
where
$$d_k=\int_{(-1,1)^3}\eta\exp(a\eta^q)\Phi_{N,k}d\bar{v}.$$
Suppose that 
$$\Phi_{N,k}(\bar{v})={\phi}^{per}_{-N,k_1}(\bar{v}_1){\phi}^{per}_{-N,k_2}(\bar{v}_2){\phi}^{per}_{-N,k_3}(\bar{v}_3),$$
with $k=(k_1,k_2,k_3)$ and $|k_1|\geq |k_2|\geq |k_3|$. Hence, $|\bar{v}|=\max\{|\bar{v}_1|,|\bar{v}_2|,|\bar{v}_3|\}\in[2^{-N}(2|k_1|-1),2^{-N}(2|k_1|+1)]$ if $k_1\ne2^{N-1}$ and $|\bar{v}|\in[0,2^{-N}]$ if $k_1=2^{N-1}$.  
\\ If $k_1\ne2^{N-1}$ and $|\bar{v}|=\max\{|\bar{v}_1|,|\bar{v}_2|,|\bar{v}_3|\}\in[2^{-N}(2|k_1|-1),2^{-N}(2|k_1|+1)]$.  
\begin{eqnarray}\label{PropoHaarExponentialMomentsEq1}
& &\frac{d_k\Phi_{N,k}(\bar{v})}{\left(1+\frac{|\bar{v}|^2}{(1-|\bar{v}|)^2}\right)^{-1}\exp\left(a\left(\frac{|\bar{v}|^2}{(1-|\bar{v}|)^2}\right)^q\right)}\\\nonumber
&\leq&\frac{1+\frac{(2^{-N}(2|k_1|+1))^2}{(1-2^{-N}(2|k_1|+1))^2}}{1+\frac{(2^{-N}(2|k_1|-1))^2}{(1-2^{-N}(2|k_1|-1))^2}}\left[\exp\left(a\left(\frac{(2^{-N}(2|k_1|+1))^2}{(1-2^{-N}(2|k_1|+1))^2}\right)^q\right)\right.\\\nonumber
& &\left.-\exp\left(a\left(\frac{(2^{-N}(2|k_1|-1))^2}{(1-2^{-N}(2|k_1|-1))^2}\right)^q\right)\right]\\\nonumber
&\leq &\frac{1+\frac{(2^{-N}(2|k_1|+1))^2}{(1-2^{-N}(2|k_1|+1))^2}}{1+\frac{(2^{-N}(2|k_1|-1))^2}{(1-2^{-N}(2|k_1|-1))^2}}\left[2\exp\left(a\left(\frac{\Delta}{1-\Delta}\right)^{2q}\right)\right]\leq C,
\end{eqnarray}
where the second inequality follows from a similar argument as proposition $\ref{PropoHaarPolynomialMoments}$.
\\ If $k_1=2^{N-1}$ and $|\bar{v}|\in[0,2^{-N}]$. 
\begin{eqnarray}\label{PropoHaarExponentialMomentsEq2}
& &\frac{d_k\Phi_{N,k}(\bar{v})}{\left(1+\frac{|\bar{v}|^2}{(1-|\bar{v}|)^2}\right)^{-1}\exp\left(a\left(\frac{|\bar{v}|^2}{(1-|\bar{v}|)^2}\right)^q\right)}\\\nonumber
&\leq&{\left(1+\frac{|2^{-N}|^2}{(1-2^{-N})^2}\right)\exp\left(a\left(\frac{|2^{-N}|^2}{(1-2^{-N})^2}\right)^q\right)}\leq C.
\end{eqnarray}
The two inequalities $(\ref{PropoHaarExponentialMomentsEq1})$ and $(\ref{PropoHaarExponentialMomentsEq2})$ imply the conclusion of the proposition.
\end{proof}
\subsection{Propagation of exponential moments}
\begin{theorem}\label{ThmUpperMaxwellBounds}Assume that the assumptions $\ref{Assumption1}$, $\ref{Assumption2}$, $\ref{AssumptionPolynomialMoments}$, $\ref{AssumptionExponentialMoments}$ are all satisfied. Assume moreover that the initial data satisfies for some $s\in[\gamma,1]$
\begin{equation}\label{ThmUpperMaxwellBoundsEq0a}
\int_{\mathbb{R}^3}f_0(v)(a_0|v|^s)dv\leq C_0.
\end{equation}
Then there are some constants $C,a,N_0>0$ which depend only on the equation, the initial mass, momentum energy and $a_0$, $C_0$ in $(\ref{IntialDataExponentialAssumpition})$ such that
\begin{equation}\label{ThmUpperMaxwellBoundsEq0b}
\int_{(-1,1)^3}h_N(t,\bar{v})\exp\left(a\left(\frac{|\bar{v}|}{1-|\bar{v}|}\right)^s\right)d\bar{v}<C,~~~\forall N>N_0.
\end{equation}
\end{theorem}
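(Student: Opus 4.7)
The plan is to follow the moment-by-moment strategy of Bobylev--Gamba--Panferov (\cite{BobylevGambaPanferov:2004:MIH}) as refined by Alonso--Ca\~nizo--Gamba--Mouhot in \cite{ACGM:2013:NAC}. In terms of the lifted function $f_N(v) := h_N(\varphi(v))(1+|v|)^{-4}$ and the polynomial moments $m_{sp}^N(t) := \int f_N(t,v) |v|^{sp} dv$, the goal is to bound
\[\mathcal{E}_a^N(t) := \int_{\mathbb{R}^3} f_N(t,v) \exp(a|v|^s) dv = \sum_{p=0}^\infty \frac{a^p}{p!} m_{sp}^N(t)\]
uniformly in $N$ by deriving an ODE of the form $d\mathcal{E}_a^N/dt \leq C \mathcal{E}_a^N - c (\mathcal{E}_a^N)^{1+\gamma/s} + \mathcal{R}^N$ with $\mathcal{R}^N$ a remainder controlled uniformly in $N$ and $t$.

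First I would initialise the estimate: using $h_{0_N} = \mathbb{P}_N h_0$, the self-adjointness of $\mathcal{P}_N$, and Assumption \ref{AssumptionExponentialMoments} (with $q=s/2$), together with hypothesis $(\ref{ThmUpperMaxwellBoundsEq0a})$,
\[\int_{(-1,1)^3} h_{0_N}\exp\!\left(a\left(\tfrac{|\bar{v}|}{1-|\bar{v}|}\right)^s\right) d\bar{v} = \int h_0 \eta^{-1} \mathcal{P}_N\!\left[\eta\exp\!\left(a\left(\tfrac{|\bar{v}|}{1-|\bar{v}|}\right)^s\right)\right] d\bar{v} \leq \bar{\mathcal{K}} C_0.\]
Next, for each integer $p\geq 1$, testing $(\ref{SpectralMethod1})$ against $\eta^{-sp/2}$ and repeating the computation of Theorem \ref{ThmPropaPolyMoments} (equations $(\ref{ThmPropaPolyMomentsEq2})$--$(\ref{ThmPropaPolyMomentsEq4})$) would yield
\[\frac{d m_{sp}^N}{dt} \leq \frac{1}{2}\int B(|v-v_*|,\sigma) f_N f_{N*} \bigl(|v'|^{sp} + |v'_*|^{sp} - |v|^{sp} - |v_*|^{sp}\bigr) d\sigma\, dv_* dv + \mathcal{R}_p^N(t),\]
where $\mathcal{R}_p^N$ collects projection remainders from $\mathcal{P}_N^C[\eta^{1-sp/2}]$. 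Applying Povzner's inequality together with the Bobylev--Gamba--Panferov combinatorial lemma to bound the gain--loss convex-combination term, multiplying by $a^p/p!$ and summing over $p$ produces the sought differential inequality; a classical ODE comparison argument as in \cite{ACGM:2013:NAC} then delivers boundedness of $\mathcal{E}_a^N$ for $a>0$ small enough.

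The principal obstacle is controlling the accumulated projection remainder $\mathcal{R}^N := \sum_p \tfrac{a^p}{p!} \mathcal{R}_p^N$. Assumption \ref{AssumptionPolynomialMoments} alone provides only a rough $\epsilon(N)$ at each moment order $sp$, with no quantified dependence on $p$, so naively weighting by $a^p/p!$ and summing could blow up. Assumption \ref{AssumptionExponentialMoments} is tailored precisely to sidestep this moment-by-moment reasoning: by projecting the assembled series $\eta\exp(a|v|^s)$ directly, the pointwise bound $\mathcal{P}_N[\eta\exp(a(|\bar{v}|/(1-|\bar{v}|))^s)] \leq \bar{\mathcal{K}}\eta\exp(a(|\bar{v}|/(1-|\bar{v}|))^s)$ dominates $\mathcal{R}^N$ on the support $[-\zeta_N,\zeta_N]^3$ of $h_N$, while outside this support the integrand vanishes identically. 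Once $\mathcal{R}^N$ is uniformly bounded, the ODE argument of \cite{ACGM:2013:NAC} transfers essentially verbatim and closes the proof of $(\ref{ThmUpperMaxwellBoundsEq0b})$.
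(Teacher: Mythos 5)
Your general strategy (moment-by-moment estimates with the sharp Povzner lemma of \cite{ACGM:2013:NAC}, then summation with $a^p/p!$ weights and an ODE comparison) is the one the paper follows, and your initialisation step using self-adjointness of $\mathcal{P}_N$ together with Assumption \ref{AssumptionExponentialMoments} correctly reproduces $(\ref{BeforeAssumptionExponentialMoments})$. However, the mechanism you propose for controlling the accumulated projection remainder is not valid, and it misrepresents what Assumption \ref{AssumptionExponentialMoments} supplies. That assumption gives an \emph{upper} bound $\mathcal{P}_N[\eta\exp(a\eta^q)]\leq\bar{\mathcal{K}}\,\eta\exp(a\eta^q)$; the remainder involves $\mathcal{P}_N^C=Id-\mathcal{P}_N$, and an upper bound on $\mathcal{P}_N$ gives only a \emph{lower} bound on $\mathcal{P}_N^C$, the wrong direction, and in any case $\mathcal{P}_N^C$ is paired against the mixed-sign quantity (gain minus loss), which a one-sided bound cannot dominate. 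In the paper, Assumption \ref{AssumptionExponentialMoments} is used \emph{only} to deduce the bound on the initial data; it plays no role in the evolution estimate.

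The blow-up you fear is not there, and seeing why is the structural heart of the argument. When the test function $\eta^{-sp/2}$ is factored as $\eta^{1-sp/2}\cdot\eta^{-1}$ and only the first factor is hit by $\mathcal{P}_N^C$, the remainder at moment order $sp$ is $\epsilon(N)$ multiplied by the \emph{fixed} low-order quantity $\int B\,f_{N*}f_N\,|v|^2$ --- see $(\ref{ThmUpperMaxwellBoundsEq2})$ and the analogous split in $(\ref{ThmPropaPolyMomentsEq3})$--$(\ref{ThmPropaPolyMomentsEq4})$ --- \emph{not} by an $m^N_{sp+\gamma}$-level moment. Consequently $\sum_p\frac{a^p}{p!}\,\epsilon(N)\,m^N_{\gamma+2}$ is harmless: the $1/p!$ factor absorbs any polynomial or exponential growth of $\epsilon(N)$ in $p$ visible in Proposition \ref{PropoHaarPolynomialMoments}. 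One then closes via the Young inequality $m_{\gamma+2}^N\leq C(\epsilon)m_\gamma^N+\epsilon\,m^N_{sp_0+\gamma}$ and absorption of the small piece into the coercive term $-K_1 I^m_{s,\gamma}$ for $N$ large, as in $(\ref{ThmUpperMaxwellBoundsEq12})$. You should replace the appeal to Assumption \ref{AssumptionExponentialMoments} for the remainder by this low-order-moment observation and the Young-inequality absorption step.
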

\begin{proof} We define
\begin{equation}\label{ThmUpperMaxwellBoundsEq1}
m_p^N(t)=\int_{\mathbb{R}^3}f_N(t,v)|v|^pdv,~~~p\in\mathbb{R}_+.
\end{equation}
We now prove the theorem in two steps.
{\\\bf Step 1:} Estimate $m_{sp}^N$, with $0<s\leq 1$ and $p\geq 2/s$.
\\ A similar argument as theorem $\ref{ThmPropaPolyMoments}$ gives
\begin{eqnarray}\label{ThmUpperMaxwellBoundsEq2}\nonumber
& &\int_{\mathbb{R}^3}\partial_tf_N|v|^{sp}\\\nonumber
&\leq&\int_{\mathbb{R}^6\times\mathbb{S}^2}|v-v_*|^\gamma b(\cos\theta)f_{N*}f_N[|v'_*|^{sp}+|v'|^{sp}-|v_*|^{sp}-|v|^{sp}]d\sigma dv_*dv\\
& &+2\epsilon(N)\int_{\mathbb{R}^6}|v-v_*|^\gamma b(\cos\theta)f_{N*}f_N|v|^2d\sigma dv_*dv.
\end{eqnarray}
We recall the sharp Povzner Lemma (Lemma 3 \cite{ACGM:2013:NAC}) for $q\geq 1$
\begin{equation}\label{SharpPovznerLemma}
\int_{\mathbb{S}^2}(|v'|^{2q}+|v'_*|^{2q})b(\cos\theta)d\sigma\leq \gamma_q(|v|^2+|v_*|^2)^q,
\end{equation}
where $\gamma_q$ are positive constants satisfying $q\to\gamma_q$ is strictly decreasing and tends to $0$ as $q\to\infty$.
\\ Apply $(\ref{SharpPovznerLemma})$ to $(\ref{ThmUpperMaxwellBoundsEq2})$, we get  
\begin{eqnarray}\label{ThmUpperMaxwellBoundsEq3}\nonumber
\frac{d}{dt}m_{sp}^N&\leq& \gamma_{\frac{sp}{2}}\int_{\mathbb{R}^6}f_Nf_{N*}\left[(|v|^2+|v_*|^2)^{\frac{sp}{2}}-|v|^{sp}-|v_*|^{sp}\right]|v-v_*|^\gamma dv_*dv\\
 & &-2\left(1-\gamma_{\frac{sp}{2}}\right)\int_{\mathbb{R}^6}f_Nf_{N*}|v|^{sp}|v-v_*|^\gamma dv_*dv\\\nonumber
 & &+2\epsilon(N)\int_{\mathbb{R}^6\times\mathbb{S}^2}|v-v_*|^\gamma f_{N*}f_N|v|^2dv_*dv,
\end{eqnarray}
with the normalized assumption
$$\int_{\mathbb{S}^2}b(\cos\theta)d\sigma=1.$$
By the inequalities
$$(|v|^2+|v_*|^2)^{\frac{sp}{2}}\leq (|v|^s+|v_*|^s)^{p},\mbox{ for }0<{s}\leq 1,$$
and 
$$\sum_{k=1}^{\left[\frac{p+1}{2}\right]-1}C^k_p(a^kb^{p-k}+a^{p-k}b^k)\leq (a+b)^p-a^p-b^p\leq \sum_{k=1}^{\left[\frac{p+1}{2}\right]}C^k_p(a^kb^{p-k}+a^{p-k}b^k),$$
(see Lemma 2 in \cite{BobylevGambaPanferov:2004:MIH}), we can bound the first term on the right hand side of $(\ref{ThmUpperMaxwellBoundsEq3})$
\begin{eqnarray}\label{ThmUpperMaxwellBoundsEq4}\nonumber
&& \gamma_{\frac{sp}{2}}\int_{\mathbb{R}^6}f_Nf_{N*}\left[(|v|^2+|v_*|^2)^{\frac{sp}{2}}-|v|^{sp}-|v_*|^{sp}\right]|v-v_*|^\gamma dv_*dv\\
 & \leq &2\gamma_{\frac{sp}{2}}\sum_{k=1}^{\left[\frac{p+1}{2}\right]}C^k_p(m_{sk+\gamma}^Nm_{s(p-k)}^N+m_{sk}^Nm_{s(p-k)+\gamma}^N).
\end{eqnarray}
We then define
$$S_{s,p}:=\sum_{k=1}^{\left[\frac{p+1}{2}\right]}C^k_p(m_{sk+\gamma}^Nm_{s(p-k)}^N+m_{sk}^Nm_{s(p-k)+\gamma}^N).$$
The second term could be bounded from below by 
\begin{eqnarray}\label{ThmUpperMaxwellBoundsEq5}
2\left(1-\gamma_{\frac{sp}{2}}\right)\int_{\mathbb{R}^6}f_Nf_{N*}|v|^{sp}|v-v_*|^\gamma\geq2\left(1-\gamma_{\frac{sp}{2}}\right)\bar{C}_\gamma [m_{sp+\gamma}^N+C],
\end{eqnarray}
where $\bar{C}_\gamma$ depends on $\gamma$ and the initial data and we use lemma $\ref{LemmaMaxwellian1}$, with the assumption that $N$ is sufficiently large.
\\ We can also estimate the third term 
\begin{eqnarray}\label{ThmUpperMaxwellBoundsEq6}
 2\epsilon(N)\int_{\mathbb{R}^6\times\mathbb{S}^2}|v-v_*|^\gamma b(\cos\theta)f_{N*}f_N|v|^2dv_*dv\leq 2C\epsilon(N) [m_{\gamma+2}^N+C].
\end{eqnarray}
Combine $(\ref{ThmUpperMaxwellBoundsEq3})$, $(\ref{ThmUpperMaxwellBoundsEq4})$, $(\ref{ThmUpperMaxwellBoundsEq5})$ and $(\ref{ThmUpperMaxwellBoundsEq6})$, with the assumption that $\epsilon(N)$ is small enough, we get
\begin{eqnarray}\label{ThmUpperMaxwellBoundsEq7}
\frac{d}{dt}m_{sp}^N&\leq& 2\gamma_{\frac{sp}{2}}S_{s,p}-2\left(1-\gamma_{\frac{sp}{2}}\right)\bar{C}_\gamma  m_{sp+\gamma}^N+2C\epsilon(N) m_{\gamma+2}^N.
\end{eqnarray}
{\bf Step 2:}  Reduce the problem to the classical case of  \cite{ACGM:2013:NAC}.\\
We define 
$$E_s^m(t,z):=\sum_{p=0}^mm^N_{sp}(t)\frac{z^p}{p!},$$
and
$$I_{s,\gamma}^m(t,z):=\sum_{p=0}^mm^N_{sp+\gamma}(t)\frac{z^p}{p!}.$$
Let $s$ be in $[\gamma,1]$ and $p_0>\frac{2}{s}$. We reuse $(\ref{ThmUpperMaxwellBoundsEq7})$ with $a<\min\{1,a_0\}$ to get
\begin{eqnarray}\label{ThmUpperMaxwellBoundsEq12}\nonumber
\frac{d}{dt}\sum_{p=p_0}^{m}m_{s p}\frac{a^p}{p!}&\leq& \sum_{p=p_0}^{m}\frac{a^p}{p!}(2\gamma_{\frac{\gamma p}{2}}S_{\gamma,p}-2\bar{C}_\gamma \left(1-\gamma_{\frac{sp}{2}}\right) m_{s p+\gamma}^N+2C\epsilon(N) m_{\gamma+2}^N)\\\nonumber
&\leq& \sum_{p=p_0}^{m}\frac{a^p}{p!}2\gamma_{\frac{s p}{2}}S_{s,p}-K_1I_{s,\gamma}^m+K_1\sum_{p=0}^{p_0-1}m_{s p+\gamma}^N\frac{a^p}{p!}\\\nonumber
& &+\left(\sum_{p=p_0}^{m}\frac{a^p}{p!}\right)K_2\epsilon(N)m_{\gamma+2}^N\\
&\leq& \sum_{p=p_0}^{m}\frac{a^p}{p!}2\gamma_{\frac{s p}{2}}S_{s,p}-K_1I_{s,\gamma}^m+K_1\sum_{p=0}^{p_0-1}m_{s p+\gamma}^N\frac{a^p}{p!}\\\nonumber
& &+\left(\sum_{p=p_0}^{m}\frac{a^p}{p!}\right)K_2\epsilon(N)(C(\epsilon) m_{\gamma}^N+\epsilon m_{s p_0+\gamma}^N)\\\nonumber
&\leq& \sum_{p=p_0}^{m}\frac{a^p}{p!}2\gamma_{\frac{s p}{2}}S_{\gamma,p}-K_3I_{s,\gamma}^n+K_4\sum_{p=0}^{p_0-1}m_{s p+\gamma}^N\frac{a^p}{p!},
\end{eqnarray}
where in the third inequality, we use Young's inequality
$$m_{\gamma+2}^N\leq C(\epsilon) m_{\gamma}^N+\epsilon m_{s p_0+\gamma}^N,$$
in the fourth inequality, we suppose that $N$ is sufficiently large, such that $K_1$ could absorb the constants of the last term. Once we have $(\ref{ThmUpperMaxwellBoundsEq12})$ the proof  could be proceeded in exactly the same way as the proof of the classical case (Theorem 2 \cite{ACGM:2013:NAC}).
\end{proof}
\section{Conclusion}
For the last two decades, nonlinear approximation based on wavelets has become one of the most important theories in scientific computing and the theory for elliptic equations has been fully developed (\cite{DeVore:2007:OC},\cite{DeVore:1998:NA},\cite{DaubechiesDeVore:2003:ABF},\cite{Cohen:2002:AMP},\cite{Dahmen:1995:MTS}). This paper is the first bridge between the two important theories: kinetic and nonlinear approximation. The strategy is based on  a new way of constructing an adaptive non-uniform mesh and a new filtering technique. The non-uniform mesh is created by a wavelet "support-stretching" technique: we stretch supports of wavelets defined in a bounded domain to the entire space to get a new "nonlinear basis", which are "the approximants" of our nonlinear approximation and solve the problem on the whole space. In our approximation, the lower-upper Maxwellian bounds play the role of a preconditioning technique. Our wavelet filtering technique designed to preserve the properties of propagation of polynomial and exponential moments is inspired by Zuazua's Fourier filtering technique in Control Theory (\cite{Zuazua:2005:POC},\cite{Zuazua:CNA:2006}). We have provided a complete theory for the method. Our nonlinear approximation solves the equation without having to impose non-physics conditions on the equation and could also be considered as an equivalent strategy with the Absorbing Boundary Conditions in PDEs theory (\cite{EngquistMajda:1977:ABC},\cite{EngquistMajda:1979:RBC}) for kinetic integral equations like Boltzmann equations, coagulation equations...    The reason that some physical properties of the Boltzmann equation could not be preserved through classical Discrete Velocity Models is that the convolution structure is destroyed. One of the reasons that make Fourier basis not an ideal choice for spectral approximations is that it could not preserve the structure of the collision operator, for example the "coercivity" property of the "gain" part of the collision operator, which is due to the non-positivity of the projection of a positive function onto its Fourier components as well as the effect of the Gibbs phenomenon. With suitable wavelet basis, our approach perfectly preserves the structure of the equation, therefore it is quite normal that most physical properties of the solution are reflected in the approximate solutions, while previous strategies could not. Our frame work also gives a unified point of view for the two available methods, Fourier Spectral Methods and Discrete Velocity Models: these strategies could be considered as special cases of our method in the sense that our approximation could produce spectral methods as well as discrete velocity models by using different wavelet basis; moreover, they are nonlinear and adaptive. One of the main advantages of adaptive, nonlinear approximations is that they are much cheaper (\cite{DeVore:2007:OC},\cite{DeVore:1998:NA}) than normal ones. Our method is therefore cheap and quite easy to use.  Moreover, it has a spectral accuracy. In this paper, we only give a theory of this method for cutoff collision kernels, however it would be not difficult to extend it to the non-cutoff and non-homogeneous cases. Numerical tests and accelerations of our adaptive spectral approximation will be provided in a coming paper. 
\\ {\bf Acknowledgements.} The author would like to sincerely thank his advisor, Professor Enrique Zuazua and his second advisor, Professor Miguel Escobedo for their great support, deep inspiration, constant encouragement and kindness. He would also like to express his gratitude to Professor Ronald Devore for the course and fruitful discussions on the Nonlinear Approximation Theory. The author has been supported by Grant MTM2011-29306-C02-00, MICINN, Spain, ERC Advanced Grant FP7-246775 NUMERIWAVES, and Grant PI2010-04 of the Basque Government. 
\bibliographystyle{plain}\bibliography{ApproximatingBoltzmann}
\end{document}